\documentclass[reqno]{amsart}
\usepackage[utf8]{inputenc}
\usepackage{enumitem}

\usepackage{amsmath,amssymb,amsbsy,amsfonts,amsthm,latexsym,amsopn,amstext,mathtools,leftidx,
            amsxtra,euscript,amscd,verbatim,epsf,colordvi,graphics}
\usepackage{ytableau}
\usepackage{color}
\usepackage{pgf,tikz}
\usetikzlibrary{decorations.pathreplacing}
\usetikzlibrary{arrows}
\usepackage{mathrsfs}
\usepackage{graphicx}

\usepackage{hyperref}

\usepackage{cleveref}
\usepackage[margin=11pt,font=footnotesize,labelfont=bf]{caption}
\newtheoremstyle{tplain}{3pt}{3pt}{\rmfamily}{}{\bfseries}{.}{0.5em}{}
\theoremstyle{tplain}

\usepackage{pgf,tikz}
\usepackage{tcolorbox}
\usepackage[enableskew]{youngtab}
\definecolor{darkgreen}{cmyk}{1,0,1,0}
\newtheorem{thm}{Theorem}
\newtheorem{lem}{Lemma}
\newtheorem{ex}{Example}
\newtheorem{cor}{Corollary}
\newtheorem{prop}{Proposition}
\newtheorem{obs}{Remark}
\newtheorem{defi}{Definition}
\def \cal{\mathcal}
\def \rm {\mathrm}
\def \mbf {\mathbf}

\def\rev{{\rm rev}}
\def\Z { \mathbb{Z}}
\def\C { \mathbb{C}}
\def\LRAII {\mathsf{LR}}
\def \r {\mathbf{r}}
\def \t {\mathbf{t}}
\def \redu {\mathrm{red}}
\def\c { \mathrm{c}}
\def \k {\mathfrak{k}}
\def\wt { \mathrm{wt}}
\def\shape { \mathrm{shape}}
\def\rem { \mathrm{rem}}
\def\suc { \mathrm{suc}}
\def \s {\mathbf{s}}

\newcommand{\oo}{\color{blue}}
\newcommand{\blue}{\color{blue}}
\newcommand{\red}{\color{red}}
\newcommand{\green}{\color{green}}
\newcommand{\bro}{\color{brown}}

\newcommand{\ora}{\color{orange}}

\makeatletter
\newcommand*\bigcdot{\mathpalette\bigcdot@{.5}}
\newcommand*\bigcdot@[2]{\mathbin{\vcenter{\hbox{\scalebox{#2}{$\m@th#1\bullet$}}}}}
\makeatother

\newcommand*\circled[1]{\tikz[baseline=(char.base)]{
            \node[shape=circle,draw,inner sep=1pt] (char) {#1};}}

\newcommand{\YT}[3]{
\vcenter{\hbox{
\begin{tikzpicture}[x={(0in,-#1)},y={(#1,0in)}] 
\foreach \rowi [count=\i] in {#3} {
 \foreach \e [count=\j] in \rowi {
  \draw (\i,\j) rectangle +(-1,-1);
  \draw (\i-0.5,\j-0.5) node {$#2\e$};
 }
}
\end{tikzpicture}
}}
}

\title[Inverse reduction map] {
The inverse  reduction map  in the quantum Littlewood-Richardson bijection }

\author{Olga Azenhas}
\address{ University of Coimbra, CMUC, Department of Mathematics, Portugal}
\email{oazenhas@mat.uc.pt}

\keywords{   quantum Littlewood-Richardson bijection, reduction map, inverse reduction map }

\subjclass[2000]{05E05, 05E10, 05E14, 17B37, 68Q17}

\begin{document}

\begin{abstract} We decompose  a symplectic column by detaching  its  maximal nonempty intervals that are fixed by the parity involution, this  is the part of the symplectic column that is fixed by that involution, and the remaining  part consisting of the subword whose image under the parity involution is disjoint of the symplectic column. The parity involution swaps an even number with the previous odd number and an odd number with the next even number. That decomposition is uniquely determined by the symplectic column and the symplectic conditions satisfied by those pieces pack the needed information to explicitly write the inverse of the reduction map  in the quantum Littlewood-Richardson bijection.
The tools provided here can be combined with the composition of the inverses of the  several maps in which the reduction map decomposes, given by Watanabe, namely, among them, combinatorial $R$-matrices and reduction maps  on shorter symplectic columns.
\end{abstract}
\maketitle

\tableofcontents

\section{Introduction}
Given   $\lambda \in Par_{\le 2n}$  a partition with at most $2n$ parts,
the quantum Littlewood-Richardson map $LR^{AII}$ \cite[Theorem 3.1.4]{watanabe}
is an one-to-one
assignment of a semi-standard tableau $T$ of shape $\lambda$  to a pair $(P^{II}(T), Q^{II}(T)) $   consisting of a symplectic tableau in $ SpT_{2n}(\mu)$,  for some shape $ \mu\in Par_{\le n}$, a partition with at most $n$ parts, and   a recording tableau of skew shape $\lambda/\mu$ in $Rec_{2n}(\lambda/\mu)$,

\begin{align} \label{lrsII}
 SST_{2n}(\lambda) \overset{\sim}\longrightarrow
\bigsqcup_{\begin{smallmatrix}\mu\in Par_{\le n}\\
\mu\subseteq\lambda\\
Q\in Rec_{2n}(\lambda/\mu)\end{smallmatrix}}SpT_{2n}(\mu) \times \{Q\}.
\end{align}

The set of recording tableaux $Rec_{2n}(\lambda/\mu)$ in the quantum Littlewood-Richardson (LR) map $LR^{AII}$ is in natural bijection \cite{watanabe, azreco} with the set of LR-Sundaram tableaux  $LRS_{2n}(\lambda/\mu)$ \cite{sundaram},
\begin{align}\label{record}Rec_{2n}(\lambda/\mu) \overset{\sim}\longrightarrow LRS_{2n}(\lambda/\mu).
\end{align}

For $T\in  SST_{2n}(\lambda)$, the computation of $P^{II}(T)$ is described as follows.  Decompose $T$, $d(T):=({\bf a}, T')$, where ${\bf a}\in SST_{2n}(\varpi_l)$  is the first column of $T$, $l=\ell(\lambda)$ the length of $\lambda$, and $T'$ is the semi-standard tableau in $  SST_{2n}(\lambda- \varpi_l)$, that remains after detaching the first column. Apply the reduction map, $\redu$,  to $\bf a$, to get $\redu({\bf a})\in SpT_{2n}(\varpi_t$  for some
$t\in[0,n]$ such that $0\le t\le min\{l,2n-l\}$ and $l-t\in 2\Z$ \cite[Proposition 4.3.6.]{watanabe}. The reduction map is injective \cite[Corollary 4.4.3]{watanabe} and can be computed using the removal operation \cite[Definition 2.5.1]{watanabe} with their nice properties \cite[Proposition 4.2.2, 4.2.7,  4.2.8]{watanabe} in the sense that $\redu({\bf a})={\bf a} \setminus \rem({\bf a})$.  Then by column Schensted insertion \cite{fulton, stanley}, insert $\redu({\bf a})$ into $T'$, $(T'\leftarrow \redu({\bf a}))$, to get the successor of $T$, $\suc(T)$,
\begin{align}T\mapsto d({\bf a}, T')\mapsto (\redu( {\bf a}), T')\mapsto \suc(T):=T'\otimes \redu({\bf a})=(T'\leftarrow\redu({\bf a}) ).
\end{align}

If $\redu({\bf a})\otimes T'$ is not a symplectic tableau, iterate the previous procedure until getting a symplectic tableau which is precisely $P^{II}(T)$. (If  $\redu({\bf a})\otimes T'$ is not symplectic, it means the first column is not symplectic, and the next successor will have  strictly less   cells otherwise the successor stabilizes. Thus the procedure  terminates after a finite number of iterations. The set $\rem({\bf a})$ consists of certain  pairs of entries, an odd entry followed  with an even entry (see Proposition \ref{prop:removcontain}, to be removed from $\bf a$ so that the resulting column is symplectic.  Thus the inverse of the reduction map consists in adding such pairs to a symplectic column so that they can be removed.

Recalling that the parity involution $\s$, Definition \ref{s}, swaps an even number with the previous odd number and an odd number with the next even number. In Theorem \ref{thm:symplecticdecomposition} we show that
any symplectic column ${\bf a}\in SpT_{2n}(\varpi_t)$ can be decomposed, for some $k\ge 0$, in the form
\begin{align}\label{introdu:symplecticolumn}
{\bf a}=
X^{(0)}{\bf A_1} X^{(1)}{\bf A_2}\cdots {\bf A_{k-1}}X^{(k-1)} {\bf A_k} X^{(k)},
\end{align}

where
\begin{itemize}
\item  for $i=0,1,\dots,k$, each $X^{(i)}$ is a column word of length $p_i\ge 0$, such that

\begin{align}\label{introdu:xx}\s(X^{(0)}X^{(1)}\cdots X^{(k)})\cap {\bf a}=\emptyset, \mbox{ and}
\end{align}

\item for $i=1,\dots, k$, each factor ${\bf A_i}$ is
a maximal nonempty interval  of even length $t_i\ge 2$     such that
\begin{align}\s({\bf A_i})={\bf A_i},
\end{align}
with $$t=\sum_{i=1}^kt_i +\sum_{i=0}^k p_i  \in [n].$$
\end{itemize}

For $i=1,\dots,k$, such intervals are symplectic pieces of the form
$${\bf A_i}=(a_i, a_i+1,\dots,a_i+t_i-1), \mbox{ $a_i\notin 2\Z$}$$
satisfying
\begin{align}
&a_i\ge 2t^{(i-1)}+t_i+1, ~\mbox{ with $\displaystyle t^{(i-1)}:=\sum_{j=1}^{i-1}t_j+\sum_{j=0}^{i-1}p_j$, $t^{(0)}:=p_0$, and $t^{(k)}:=t$.}
   \end{align}
Lemma \ref{lem:swapping} provides  equivalent useful ways to phrase the property \eqref{introdu:xx} to constructing the inverse reduction of the symplectic column word
$X^{(0)}X^{(1)}\cdots X^{(k)}$.
The inverse reduction of the symplectic column $\bf a$ in \eqref{introdu:symplecticolumn}, $\redu^{-1}({\bf a})$, is given in the following theorems as follows: Theorem \ref{thm:noconsecutive} gives $\redu^{-1}({\bf a})$ for $k=0$, that is, for ${\bf a}=X^{(0)})$ and so ${\bf a}\cap \s({\bf a})=\emptyset$;
Theorem \ref{thm:intervals} gives $\redu^{-1}({\bf a})$ for $p_0=\cdots=p_k=0$, that is, for ${\bf a}=\bigoplus_{i=1}^k {\bf A_i}$; and
Theorem \ref{thm:reductiexp} mixes the two previous theorems and gives $\redu^{-1}({\bf a})$ for any symplectic column $\bf a$. For the sake of legibility also some intermediate cases are provided, and all theorems are illustrated.

A compromise is a combination of our procedures with the inverse of  the decomposition of the reduced map into several maps namely combinatorial $R$-matrices and reduction maps  of lower degree \cite[Corollary 4.4.3]{watanabe}  as discussed in the last Section \ref{sec:final}.

\subsection{Organization} The paper is organized into four sections. Section \ref{sec:containmentmain} is devoted to provide the decomposition of symplectic columns, under the action of the parity involution Definition \ref{def:s}, in Theorem \ref{thm:symplecticdecomposition}.
Section \ref{sec:inverseredu} is devoted to providing a full procedure in Theorem \ref{thm:reductiexp} to compute the inverse reduction map on a symplectic column. The full procedure  runs across theorems \ref{thm:noconsecutive}, \ref{thm:xA}, \ref{thm:xAy}, \ref{lem:reductionsurj2}, and \ref{thm:intervals}  with many illustrations. Last section discusses our methods and its combination with combinatorial $R$-matrices.

\section*{Acknowledgements}
The author acknowledges financial support by the Centre for Mathematics of the University of Coimbra (CMUC, https://doi.org/10.54499/UID/00324/2025) under the Portuguese Foundation for Science and Technology (FCT), Grants UID/00324/2025 and UID/PRR/00324/2025.

\section{Symplectic tableaux and symplectic columns}\label{sec:containmentmain}

 Through out we fix the following notation. Given $a,b\in \Z$, $[a,b]:=\{x\in \Z: a\le x\le b\}$. When $a\in\Z^+$, we just write $[a]:=[1,a]$.

\subsection{Symplectic tableaux}
We fix $n\in\mathbb{N}$.
We first need the definition of the \emph{parity} (\emph{swapping}) \emph{involution} which plays a fundamental role in the analysis of symplectic columns and consequently later in the reduction map.

\begin{defi}\label{def:s}For each $x\in\mathbb{Z}$, set \cite{watanabe} \begin{align}\label{s} \s(x)=\begin{cases}
x+1, &  \mbox{ if } x\notin 2\mathbb{Z}, \\
x-1, &  \mbox{ if }  x\in 2\mathbb{Z}.
\end{cases}
\end{align}
\end{defi}
Indeed $\s^2(x)=x$, for each $x\in \Z$. We call it the parity (swapping) involution which has the following properties
\begin{align}\label{minmax}
min(a_i,\s(a_i))\notin 2\Z,~~min(a_i,\s(a_i))-1\in 2\Z, \\
 max(a_i,\s(a_i))\in 2\Z, ~~ max(a_i,\s(a_i))+1\notin 2\Z.
\end{align}

\begin{defi}\cite{king76} \label{def:symp}A semistandard tableau $G \in SST_{2n}(\lambda)$ is said to
be symplectic if
$$G(k, 1) \ge 2k-1, \mbox{  for all $  k \in  [ \ell(\lambda)]$}.$$
Let $SpT_{2n}(\lambda)$ denote the set of all symplectic tableaux of shape $\lambda$ on the alphabet $[2n]$.
\end{defi}

\begin{prop} \label{prop:symplectic1}  
Let $G \in SST_{2n}(\lambda)$.
\begin{enumerate}
\item\cite{watanabe} If $SpT_{2n}(\lambda)\neq\emptyset$ then $\ell(\lambda)\le n$.
\item If $G=(a_1,\dots,a_t)\in SpT_{2n}(\varpi_t)$ then $t\le n$ and
$t\le \lfloor\frac{a_t+1}{2}\rfloor$.
\item \cite{watanabe} If $G$ is not symplectic, then there exists a unique
$i \in[2, 2n]$ such that
\begin{align}G(i, 1) < 2i -1 \mbox{ and } G(k, 1) \ge 2k -1  \mbox{ for all $k \in [1, i -1 ]$}.\label{G1}\end{align}
Moreover, we have
\begin{align}G(i - 1, 1) = 2i - 3=2(i-1)-1  \mbox{  and $G(i, 1) = 2i -2=2(i-1)$ }.\label{G2}\end{align}
\item If the first column $G$ has no consecutive integers consisting of an odd number followed with an even number, that is, the first column of $G$ does not contain any interval of the form $[x<\s(x)]$, then  $G$ is symplectic.
\end{enumerate}
\end{prop}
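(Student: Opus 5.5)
Parts (1) and (3) are quoted from \cite{watanabe}; I would still sketch short self-contained arguments so the proposition reads uniformly. Parts (2) and (4) are the new content. The basic tool throughout is the strict increase down a column of a semistandard tableau: if the first column is $a_1<a_2<\cdots$, then $a_{k+1}\ge a_k+1$.

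For (2), let $G=(a_1<\dots<a_t)$ be symplectic. The condition $a_t\ge 2t-1$ gives $t\le (a_t+1)/2$, and since $t$ is an integer, $t\le\lfloor (a_t+1)/2\rfloor$. As $a_t\le 2n$, we get $t\le\lfloor (2n+1)/2\rfloor=n$. Part (1) follows the same way: apply this bound to the first column of any $G\in SpT_{2n}(\lambda)$, whose length is $\ell(\lambda)$.

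For (3), suppose $G$ is not symplectic and take the smallest $i$ with $G(i,1)<2i-1$. This $i$ is unique by minimality. Since $G(1,1)\ge 1$, we have $i\ge 2$, and $i\le\ell(\lambda)\le 2n$. By minimality $G(i-1,1)\ge 2i-3$, and by column strictness $G(i,1)\ge G(i-1,1)+1\ge 2i-2$. Combined with $G(i,1)\le 2i-2$, this forces $G(i,1)=2i-2$ and then $G(i-1,1)\le 2i-3$, so $G(i-1,1)=2i-3$. This pinching step is the only real content.

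For (4), argue by contrapositive using (3). If the first column is not symplectic, (3) yields two consecutive entries $G(i-1,1)=2i-3$ and $G(i,1)=2i-2$. Here $2i-3$ is odd and $\s(2i-3)=2i-2$, so the first column contains the interval $[x<\s(x)]$ with $x=2i-3$, contradicting the hypothesis. Hence $G$ is symplectic.

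The main obstacle, mild as it is, is the two-sided squeeze in (3); (4) depends entirely on it.
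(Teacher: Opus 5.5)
Your proposal is correct, and for (4) it is the paper's own argument: the paper also deduces (4) from (3), noting that the first failure of the symplectic condition occurs at two consecutive first-column entries $2i-3<2i-2$, that is, at an interval $[x,x+1]$ with $x$ odd. For (1)--(3) the paper gives no argument and only cites Watanabe and the earlier work. Your self-contained derivations fill that gap rather than taking a different route: the bound from $a_t\ge 2t-1$ together with $a_t\le 2n$, and the squeeze $2i-2\le G(i,1)\le 2i-2$ obtained from minimality of $i$ and column strictness.
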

\begin{proof} Points $(1)-(3)$ have been considered in \cite{watanabe} and \cite{azreco}.

Point $(4)$ follows from point $(3)$. Conditions \eqref{G1} and \eqref{G2} say that the first symplectic fail needs to be checked  in the first column of $G$ among consecutive integer entries consisting of an odd number followed with an even number, and if the fail occurs it happens for the first time in a such even entry. Therefore, if intervals $[x<\s(x)]$ do not occur in the first column of $G$, $G$is symplectic.
\end{proof}

\begin{cor}\cite{azreco}\label{symplecticcolumn} The following holds:
\begin{enumerate}
\item[(a)] For $0\le t\le n$, $SpT_{2n}(\varpi_t)\neq \emptyset$. Namely,  $SpT_{2n}(())=\{()\}$, and,  for $1\le t\le n$, $G=(1,3, \dots,2t-1)\in Sp_{2n}(\varpi_t)$ or $H=(2,4,\dots, 2t)\in Sp_{2n}(\varpi_t)$. In particular, $SpT_{2n}(\varpi_1)=SST_{2n}(\varpi_1)$.
\item [(b)]  $SpT_{2n}(\lambda)\neq\emptyset$ if and only if  $\ell(\lambda)\le n$.

\item [(c)] If $G'$ is obtained from $G\in  Sp_{2n}(\varpi_t)$ by suppressing
$0\le t_0\le t$ entries then $G'\in Sp_{2n}(\varpi_{t-t_0})$ and the suppressed part $G''\in Sp_{2n}(\varpi_{t_0})$.

 \item [(d)]
  Let $\lambda\in Par_\le n$,
 $$SpT_{2n}(\lambda)=\{S\in SST_{2n}(\lambda)| (S(1,1),\cdots,S(\ell(\lambda),1)\in SpT_{2n}(\varpi_{\ell(\lambda)}) \}.$$
where $S(i,1)$ indicates the entry  in row $i$ and column $1$ of $S$, for $1\le i\le \ell(\lambda)$.
 \end{enumerate}
\end{cor}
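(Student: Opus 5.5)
Each item of Corollary \ref{symplecticcolumn} can be checked directly against Definition \ref{def:symp} for a single column $G=(a_1<\dots<a_t)$: it is symplectic exactly when $a_k\ge 2k-1$ for all $k$. The one structural fact needed, which also gives Proposition \ref{prop:symplectic1}(1),(2), is the bound $a_k\le 2n$. Then $2t-1\le a_t\le 2n$, so $t\le n$ and $t\le\lfloor (a_t+1)/2\rfloor$.

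For (a), the empty column is vacuously symplectic. For $1\le t\le n$ we have $G(k,1)=2k-1\ge 2k-1$ and $H(k,1)=2k\ge 2k-1$, and all entries are at most $2n$, so $G,H\in SpT_{2n}(\varpi_t)$. For $t=1$ the only condition is $a_1\ge 1$, which always holds, so $SpT_{2n}(\varpi_1)=SST_{2n}(\varpi_1)$.

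Item (d) holds because the symplectic condition in Definition \ref{def:symp} only involves the first-column entries $G(k,1)$. Hence $S$ is symplectic if and only if its first column, read as a column of length $\ell(\lambda)$, is symplectic.

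Item (b) then follows. If $SpT_{2n}(\lambda)\ne\emptyset$, Proposition \ref{prop:symplectic1}(1) gives $\ell(\lambda)\le n$. Conversely, if $\ell(\lambda)\le n$, fill row $k$ of $\lambda$ entirely with $2k-1$. This is semistandard: rows are constant and columns strictly increase. Its first column is $G$ from (a), so it is symplectic by (d).

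For (c), I would show that any subcolumn of a symplectic column is symplectic; this applies both to the remaining part $G'$ and to the suppressed part $G''$. Write the subcolumn as $(a_{i_1}<\dots<a_{i_s})$ with $i_1<\dots<i_s$. Since $i_j\ge j$, we get $a_{i_j}\ge 2i_j-1\ge 2j-1$, which is the required inequality at position $j$. Strict increase and the bound by $2n$ are inherited, so $G'\in SpT_{2n}(\varpi_{t-t_0})$ and $G''\in SpT_{2n}(\varpi_{t_0})$.

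There is no real obstacle. The only points needing care are the monotone-index step $i_j\ge j$ in (c), and the order of the argument: (d) should be established before it is used in (b).
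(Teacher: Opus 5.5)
Your proof is correct and complete. Each item reduces to the defining inequalities $G(k,1)\ge 2k-1$ of Definition~\ref{def:symp}, together with the bound $G(k,1)\le 2n$. The monotone-index step $i_j\ge j$ in (c) is the only point needing any thought, and you handle it properly.

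The paper gives no argument of its own for this corollary; it is quoted from \cite{azreco}, as are points (1)--(2) of Proposition~\ref{prop:symplectic1}. So there is no proof in the paper to compare yours against. Your direct verification is self-contained, and it also re-derives Proposition~\ref{prop:symplectic1}(1),(2) instead of relying on that citation.
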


\subsection{Decomposition of symplectic columns under the action of the parity involution}
 Next lemmas are a direct consequence of the parity involution $\s$ \eqref{s}. They list fundamental properties for the inverse of the reduction map $\redu$ of a symplectic column. Often  a column $\mathbf{a}\in SST_{2n}(\varpi_t)$ is regarded as a set.

 \begin{lem}\label{interval}Let $\mathbf{a}=(a, a+1,\dots, a+t-1) \in SST_{2n}(\varpi_t)$ be an interval of length $t>0$. Then
  $s({\bf a})={\bf a}\Leftrightarrow $  $2\le t\in 2\Z$ and $a\notin 2\Z$.
 \end{lem}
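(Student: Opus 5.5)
We treat ${\bf a}$ as a set and read $\s({\bf a})={\bf a}$ as the set equality $\{\s(x):x\in{\bf a}\}={\bf a}$. Since $\s$ is an involution, this holds exactly when $\s(x)\in{\bf a}$ for every $x\in{\bf a}$: then $\s({\bf a})\subseteq{\bf a}$, and the reverse inclusion follows by applying $\s$ once more. The whole proof reduces to checking this closure at the two endpoints $a$ and $a+t-1$ of the interval. Interior points are automatic, because $\s(x)=x\pm1$ lies in $[a,a+t-1]$ whenever $a<x<a+t-1$.

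($\Leftarrow$) Suppose $a\notin2\Z$ and $t\ge2$ is even. Then $a+t-1$ is even. By \eqref{s}, $\s(a)=a+1$, which lies in ${\bf a}$ because $t\ge2$. Likewise $\s(a+t-1)=a+t-2$, which lies in ${\bf a}$ because $a+t-2\ge a$. Together with the interior observation, this gives $\s({\bf a})\subseteq{\bf a}$, hence equality.

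($\Rightarrow$) Suppose $\s({\bf a})={\bf a}$. First, $t\ne1$: if $t=1$ then $\s(a)=a\pm1\neq a$, so $\s({\bf a})\neq{\bf a}$. Hence $t\ge2$. Next, if $a$ were even then $\s(a)=a-1\notin{\bf a}$, a contradiction, so $a\notin2\Z$. Finally, if $t$ were odd then $a+t-1$ would be odd, so $\s(a+t-1)=a+t\notin{\bf a}$, again a contradiction. Therefore $t\in2\Z$.

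There is no real obstacle here. The only point needing care is to state explicitly the reduction of the set equality to closure under $\s$ (using $\s^2=\mathrm{id}$), and to observe that only the two endpoints can leave the interval. One could alternatively phrase the argument via \eqref{minmax}: ${\bf a}$ is $\s$-stable iff it is a disjoint union of pairs $\{2m-1,2m\}$, and an interval is such a union iff it starts at an odd number and has even positive length.
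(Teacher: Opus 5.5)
Your proof is correct and follows the paper's own argument. The forward direction tests $\s$ at the endpoints $a$ and $a+t-1$ exactly as the paper does; your separate exclusion of $t=1$ is harmless but redundant, since $t>0$ even already forces $t\ge 2$. Your reverse direction, using closure at the endpoints together with $\s^2=\mathrm{id}$, simply spells out what the paper dismisses as obvious.
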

 \begin{proof} Suppose $\s({\bf a})={\bf a}$. If $a\in 2\Z$ then $\s(a)=a-1\notin \bf a$, and  if $a+t-1\notin 2\Z$ the $\s(a+t-1)=a+t\notin \bf a$. Both are contradictions. Hence $a\notin 2\Z$ and $a+t-1\in 2\Z\Leftrightarrow t\in 2\Z$.
 The other implication is obvious.
 \end{proof}

\begin{defi}\label{maximalintervaldecompose}Let $\mathbf{a} \in SST_{2n}(\varpi_t)$, $t>0$. We write
\begin{align}{\bf a}=\bigoplus_{i=1}^k {\bf A}_i, \mbox{ for some $k\ge 1,$ }
  \end{align}
  to mean ${\bf a}=\bigsqcup_{i=1}^k {\bf A}_i$,  for some $k\ge 1,$ where  ${\bf A_i}=(a_i,a_i+1,\dots,a_i+t_i-1)\in SST_{2n}(\varpi_{t_i})$ with  $a_i\notin 2\Z$ and  $2\le t_i\in 2\Z$, such that $a_{i+1}-a_i\ge t_i+2$  for $i=1,\dots, k$, and $t=t_1+\cdots+t_k\in  2\Z$.

  In this case we say that $\bf a$ has a decomposition into maximal intervals of even length $\ge 2$ starting with an  odd  entry and ending with an even entry. Equivalently, we say that $\bf a$ has a decomposition into maximal nonempty intervals ${\bf A}_i$ that are fixed by the parity involution $\s$, $\s({\bf A}_i)={\bf A}_i$ for every $i$.
\end{defi}
\medskip
\begin{obs}${\bf a}=(4,5,6)(9,10,11)$ is decomposed into maximal intervals but they are not fixed by the parity involution, because their lengths are odd, $\s(456)=(3456)$ and $\s(9,10,11)=(9,10,11,12)$.
\end{obs}
Next lemma generalizes the previous one and  shows that $s({\bf a})=\bf a$ if and only if  $\bf a$ decomposes into maximal intervals ${\bf A}_i$ of even length $\ge 2$ starting with an  odd  entry and ending with an even entry. That is, it provides a necessary and sufficient condition for
a column ${\bf a}\in SST_{2n}(\varpi_t)$, $t>0$ to have such  decomposition.
 \begin{lem}\label{intervals}
 Let $\mathbf{a} \in SST_{2n}(\varpi_t)$, $t>0$. Then
  $s({\bf a})={\bf a} $  if and only if
  \begin{align}\label{intervaldecompose}{\bf a}=\bigoplus_{i=1}^k {\bf A}_i, \mbox{ for some $k\ge 1,$ }
  \end{align}
  with $\bf A_i$ an interval in the conditions of Definition \ref{maximalintervaldecompose}, for $i=1,\dots,k$.
 \end{lem}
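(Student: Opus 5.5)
The plan is to reduce the statement to Lemma \ref{interval} by splitting $\mathbf{a}$, viewed as a subset of $[2n]$, into its maximal intervals of consecutive integers, and then to check that $\s$ acts on each piece separately.

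Write $\mathbf{a}=\bigsqcup_{i=1}^k \mathbf{B}_i$, where the $\mathbf{B}_i=(b_i,b_i+1,\dots,b_i+t_i-1)$ are the maximal runs of consecutive entries, listed in increasing order. Maximality means $b_{i+1}\ge b_i+t_i+1$, so there is a gap of at least one integer between consecutive runs. This decomposition exists and is unique for any column.

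For ($\Leftarrow$), assume $\mathbf{a}=\bigoplus_i \mathbf{A}_i$ as in Definition \ref{maximalintervaldecompose}. By Lemma \ref{interval}, $\s(\mathbf{A}_i)=\mathbf{A}_i$ for each $i$. Since $\s$ acts elementwise, $\s(\mathbf{a})=\bigcup_i\s(\mathbf{A}_i)=\mathbf{a}$. The parity of $t=\sum t_i$ follows because every $t_i$ is even.

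For ($\Rightarrow$), assume $\s(\mathbf{a})=\mathbf{a}$. The key claim is that each maximal run $\mathbf{B}_i$ is itself $\s$-stable. Let $x\in\mathbf{B}_i$. Then $\s(x)=x\pm1$ lies in $\mathbf{a}$ by hypothesis, and it is adjacent to $x$. Consecutive integers both lying in $\mathbf{a}$ must belong to the same maximal run, so $\s(x)\in\mathbf{B}_i$. Hence $\s(\mathbf{B}_i)\subseteq\mathbf{B}_i$, and since $\s$ is an injective involution, $\s(\mathbf{B}_i)=\mathbf{B}_i$.

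Lemma \ref{interval} applied to $\mathbf{B}_i$ then gives $b_i\notin 2\Z$ and $2\le t_i\in2\Z$. It remains to upgrade the gap condition from $b_{i+1}-b_i\ge t_i+1$ to $b_{i+1}-b_i\ge t_i+2$ as required. The last entry $b_i+t_i-1$ of $\mathbf{B}_i$ is even, so $b_i+t_i$ is odd. The next start $b_{i+1}$ is also odd and strictly greater than $b_i+t_i$, because $b_i+t_i\notin\mathbf{a}$ by maximality. Two distinct odd numbers differ by at least $2$, so $b_{i+1}\ge b_i+t_i+2$. Setting $\mathbf{A}_i=\mathbf{B}_i$ gives \eqref{intervaldecompose}, with $k\ge1$ because $t>0$.

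The only point needing care is the claim that $\s$ preserves each maximal run. It rests on the fact that $\s$ moves every integer to an adjacent one, so it cannot carry an element across a gap in $\mathbf{a}$. The rest is bookkeeping.
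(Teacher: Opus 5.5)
Your proof is correct. It differs from the paper's in how the $\s$-stable blocks are found.

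The paper works position by position. Writing $\mathbf{a}=(x_1,\dots,x_t)$, it notes that $x_1$ must be odd, hence $x_2=\s(x_1)=x_1+1$. Iterating, every entry in an odd position is odd and is immediately followed by its successor, so $\mathbf{a}=\bigsqcup_{q\notin 2\Z}\{x_q,\s(x_q)\}$ and $t\in 2\Z$. The gap bound $a_{i+1}-a_i\ge t_i+2$ then comes from the same parity argument you give.

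You instead split $\mathbf{a}$ into maximal runs first. You then use that $\s$ moves every integer by exactly one, so it cannot cross a gap and hence stabilizes each run. This lets you apply Lemma \ref{interval} to genuine intervals. The paper cites that lemma for a set that need not be an interval; its step ``$x_1\notin 2\Z$'' really rests on $x_1$ being the minimum, since otherwise $\s(x_1)=x_1-1$ would be a smaller entry.

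In exchange, the paper's route records a finer fact directly: the $\s$-orbits $\{x,x+1\}$, $x$ odd, occupy exactly the consecutive position pairs $(x_{2j-1},x_{2j})$. This gives $t\in 2\Z$ at once.
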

 \begin{proof} From Lemma \ref{interval} the "if part" is obvious.

 We prove the "only if part". Let ${\bf a}=(x_1,x_2,\dots, x_t)$ such that $s({\bf a})=\bf a$. From Lemma \ref{interval}, $x_1\notin 2\Z$ and $x_2=s(x_1)=x_1+1\in {\bf a}\cap 2\Z$, $x_3\notin 2\Z$ and $x_4=s(x_3)=x_3+1\in {\bf a}\cap 2\Z$ and, more generally,

 $$ x_q\notin 2\Z, \mbox{ for $q\notin 2\Z$ and } s(x_q)=x_q+1=x_{q+1}\in 2\Z.$$

 Thus, $t\in 2\Z$,
 \begin{align}{\bf a}=\{x_1<s(x_1)=x_2=x_1+1,\, x_3<s(x_3)=x_3+1,\dots,\, x_{t-1}<s(x_{t-1})=x_{t-1}+1\}=\bigsqcup_{i\notin [t]\cap 2\Z}[x_i,\s(x_i)].
 \end{align} 

 Lastly, if $a_i\in \bf A_i$ and $a_{i+1}\in {\bf A_{i+1}}$ are  such that
 $a_{i+1}>a_i+t-1+1=a_i+t\notin 2\Z$ then
 $$a_{i+1}-(a_i+t)\ge 2\Leftrightarrow
 a_{i+1}-a_i\ge t+2.$$
 Therefore if $s({\bf a})=\bf a$, $\bf a$ decomposes into maximal intervals of even length $\ge 2$ starting with an odd entry and ending with an even entry.
 \end{proof}

\begin{lem}\label{lem:swapping}Let $\mathbf{a} = (a_1, \dots , a_t)\in SST_{2n}(\varpi_t)$.
 The following assertions are pairwise equivalent
\begin{enumerate}
\item for every $1\le i\le t$, $\s(a_i)\notin \bf{a}$.
\item ${\bf{a}}\cap \s({\bf{a}})=\emptyset$.
 \item  $[x, y]$ is an interval factor $\subseteq \bf a $ $\Rightarrow$ $x=y$  $ \vee$ ($x\in 2\Z, y=x+1\notin 2\Z$). \label{intervaltwo}
 \item for every $1\le i<t$,
$a_i\notin 2\Z\Rightarrow a_{i+1}-a_i\ge 2$ ( $\Leftrightarrow a_{i+1}\ge \s(a_i)+1$). \label{interval3}
\item for every $1< i\le t$,
$a_i\in 2\Z\Rightarrow a_{i}-a_{i-1}\ge 2$ ( $\Leftrightarrow \s(a_i)\ge a_{i-1}+ 1$).\label{interval4}
\item for every $1\le i<t$, \label{interval5}
\begin{align*}
&(  a_i\notin 2\mathbb{Z}, ~~ a_{i+1}\in 2\mathbb{Z}\Rightarrow a_{i+1}-a_i\ge 3)\\
&\qquad\qquad\wedge\\
&( a_i,~ ~a_{i+1}\notin 2\mathbb{Z} \mbox{ or } a_i, ~~a_{i+1}\in 2\mathbb{Z}\Rightarrow a_{i+1}-a_i\ge 2)\\
 &\qquad\qquad\wedge\\
 &( a_i\in 2\mathbb{Z},~ ~a_{i+1}\notin 2\mathbb{Z}\Rightarrow a_{i+1}-a_i\ge 1).
 \end{align*}
\end{enumerate}
\end{lem}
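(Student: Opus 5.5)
Since ${\bf a}$ is a strictly increasing column, I would regard it as a set and use the two facts about $\s$ from Definition \ref{def:s} and \eqref{minmax}: $\s$ is an involution, and for every $x$ the pair $\{x,\s(x)\}$ is exactly $\{m,m+1\}$ with $m=\min(x,\s(x))$ odd. Every equivalence then becomes a statement about whether some such odd-even pair $\{m,m+1\}$ lies entirely inside ${\bf a}$. I would prove $(1)\Leftrightarrow(2)$ first, then show that each of $(3)$--$(6)$ is equivalent to
\[
(\ast)\qquad \text{there is no odd } m \text{ with } m\in{\bf a} \text{ and } m+1\in{\bf a}.
\]

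The equivalence $(1)\Leftrightarrow(2)$ is immediate: ${\bf a}\cap\s({\bf a})\neq\emptyset$ exactly when $a_j=\s(a_i)$ for some $i,j$, that is, exactly when some $\s(a_i)\in{\bf a}$. For $(1)\Leftrightarrow(\ast)$: if $\s(a_i)\in{\bf a}$, then $\{a_i,\s(a_i)\}$ is an odd-even pair $\{m,m+1\}\subseteq{\bf a}$, which violates $(\ast)$. Conversely, such a pair gives $\s(m)=m+1\in{\bf a}$, which violates $(1)$.

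For $(4)$: because ${\bf a}$ is strictly increasing, the only way $m+1$ can follow an odd $m=a_i$ in ${\bf a}$ is $a_{i+1}=a_i+1$. So $(\ast)$ says exactly that $a_i$ odd implies $a_{i+1}-a_i\ge 2$. The parenthetical form follows from $\s(a_i)=a_i+1$ when $a_i$ is odd. Statement $(5)$ is the same argument read from the even element: $a_i$ even and $a_{i-1}=a_i-1=\s(a_i)$ is the forbidden configuration. For $(3)$: any interval factor $[x,y]$ of ${\bf a}$ with $y>x$ contains the pair $\{x,x+1\}$, so by $(\ast)$ we need $x$ even. If $y\ge x+2$, then the pair $\{x+1,x+2\}$ has odd minimum $x+1$ and is contained in ${\bf a}$, which is impossible. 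Hence $y=x$ or $y=x+1$ with $x$ even. Conversely, $(3)$ applied to the interval factor $[m,m+1]$ with $m$ odd gives a contradiction, so $(3)$ implies $(\ast)$.

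For $(6)$ I would split on the parities of $a_i,a_{i+1}$. Two entries of the same parity automatically differ by at least $2$. Any strictly increasing pair differs by at least $1$. An odd $a_i$ followed by an even $a_{i+1}$ differs by an odd amount, so $\ge 2$ is the same as $\ge 3$. Therefore $(6)$ reduces exactly to $(4)$.

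The only step needing care is the set-versus-consecutive-positions issue: $(4)$ and $(5)$ talk about adjacent entries, while $(\ast)$ is about membership in the set. Strict increase of the column is what makes these agree, since $m+1\in{\bf a}$ with $m\in{\bf a}$ forces $m+1$ to be the entry immediately after $m$. Beyond this, the parity bookkeeping in $(6)$ is routine.
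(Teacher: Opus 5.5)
Your proof is correct and follows essentially the same route as the paper: your hub $(\ast)$ is assertion (1) restated via the fact that each pair $\{x,\s(x)\}$ is $\{m,m+1\}$ with $m$ odd, and the paper likewise proves each of (3)--(6) equivalent to (1) by the same parity case split, using strict increase of the column to pass between set membership and adjacent entries. Your version is slightly more complete, since you explicitly rule out interval factors $[x,y]$ with $x$ even and $y\ge x+2$ in (3), and you note that the same-parity clauses of (6) hold automatically; the paper leaves both points implicit.
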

\begin{proof} $(1)\Leftrightarrow (2)$ is obvious.

$\eqref{intervaltwo} \Rightarrow (1)$. Let $x\in \bf a$. If $x\notin 2\Z$ then $s(x)=x+1\in 2\Z$ but the interval $[x,\s(x)]$ is not allowed in $\bf a$. Hence $s(x)\notin \bf a$. If $x\in 2\Z$ then $\s(x)=x-1\notin 2\Z$ but the interval $[\s(x), x]$ is not allowed in $\bf a$. Hence $s(x)\notin \bf a$.

$ (1)\Rightarrow\eqref{intervaltwo}$. Let $I=[x,y]\subseteq \bf a$. If $x\notin 2\Z$ then $s(x)=x+1\notin \bf a$. Hence $x=y$.
If $y\in 2\Z$ then $s(y)=y-1\notin \bf a$. Hence $x=y$. Therefore it remains the interval $I=\{x, x+1\}$ with $x\in 2\Z$.

\medskip
$(4)\Leftrightarrow(1).$ Let $i\in[t]$, $t>1$.

If   $a_i\notin 2\Z$ then $\s(a_i)=a_i+1\notin {\bf a}\Leftrightarrow
 a_{i+1}\ge \s(a_i)+1\Leftrightarrow a_{i+1}-a_i\ge 2$.

 It remains to consider the case $a_i\in 2\Z$.

 If   $a_i\in 2\Z$ and $i=1$ obviously $\s(a_1)=a_1-1\notin \bf a$.

 If $a_i\in 2\Z$ and $2\le i\le t$,
 then $\s(a_i)=a_i-1\notin 2\Z$. Moreover,  if $a_{i-1}=\s(a_i)=a_i-1\notin 2\Z$ then by $(4)$, $a_{i} -a_{i-1}\ge 2\Leftrightarrow 1\ge 2$ a contradiction. Therefore
 $$\s(a_i)\notin {\bf a}\Leftrightarrow a_{i-1}\le a_i-2\Leftrightarrow a_i-a_{i-1}\ge 2$$

\medskip
 $(5)\Leftrightarrow(1).$ Let $1<i\in[t]$, $t>1$.

 If   $a_i\in 2\Z$ then $\s(a_i)=a_i-1\notin {\bf a}\Leftrightarrow
 a_{i-1}\le \s(a_i)-1\Leftrightarrow a_{i}-a_{i-1}\ge 2$.

 It remains to consider the case $a_i\notin 2\Z$.

 If $i=t$ and $a_i\notin 2\Z$ then $\s(a_t)=a_t+1\notin\bf a$.
 If
$a_i\notin 2\Z$ and $1\le i<t$,
 then $\s(a_i)=a_i+1\in 2\Z$. Moreover,  if $a_{i+1}=\s(a_i)=a_i+1\in 2\Z$ then by $(5)$, $a_{i+1} -a_{i}\ge 2\Leftrightarrow 1\ge 2$ a contradiction. Therefore $\s(a_i)\notin {\bf a}\Leftrightarrow a_{i+1}\ge a_i+2\Leftrightarrow a_{i+1}-a_i\ge 2$.

\medskip
$(1)\Leftrightarrow \eqref{interval5}$ If $a_i\notin 2\mathbb{Z}$, and $ a_{i+1}\in 2\mathbb{Z}$ then
$$2\Z\ni\s(a_i)=a_i+1\notin {\bf a}\Leftrightarrow 2\Z\ni a_{i+1}\ge \s(a_i)+2\Leftrightarrow a_{i+1}-a_i\ge 3$$

If $ a_i$, $a_{i+1}\notin 2\mathbb{Z}$, then
$$2\Z\ni\s(a_i)=a_i+1\notin {\bf a}\Leftrightarrow 2\Z\reflectbox{$\notin$} a_{i+1}\ge \s(a_i)+1\Leftrightarrow a_{i+1}-a_i\ge 2$$

If $ a_i$, $a_{i+1}\in 2\mathbb{Z}$, then
$$ 2\Z\reflectbox{$\notin$}\s(a_{i+1})=a_{i+1}-1\notin {\bf a}\Leftrightarrow 2\Z\ni a_{i}\le \s(a_{i+1})-1\Leftrightarrow a_{i+1}-a_i\ge 2$$

The implication  $a_i\in 2\mathbb{Z}, a_{i+1}\notin 2\mathbb{Z}\Rightarrow a_{i+1}-a_i\ge 1$ is tautology.
\end{proof}

 We know from Proposition \ref{prop:symplectic1}, $(4)$,  that if in a column does not occur an odd number followed with an even number then the column is symplectic. This is what this lemma asserts lemma asserts in point $(3)$. So this characterizes  the  symplectic columns where the interval factors of length $>1$   if they exist, consist solely of an even number followed with an odd number. The lemma also provides several useful ways to phrase this property. Next corollary, concludes this.

\begin{cor}\label{cor:withoutA}If $\mathbf{a} \in SST_{2n}(\varpi_t)$  satisfies some assertion in Lemma \ref{lem:swapping} then $\mathbf{a} $ is symplectic and $\mathbf{a} \in SpT_{2n}(\varpi_t)$.
\end{cor}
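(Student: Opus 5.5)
Since the assertions of Lemma \ref{lem:swapping} are pairwise equivalent, it suffices to check one of them, and I would use point \eqref{intervaltwo} or, equivalently, point (1). The plan is to feed this into Proposition \ref{prop:symplectic1}, point (4). By Corollary \ref{symplecticcolumn}(d), it is enough to look at the column $\mathbf{a}=(a_1,\dots,a_t)$ itself, viewed as a single-column tableau of shape $\varpi_t$.

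\textbf{Step 1: no bad pair occurs.} Assume (1), namely $\s(a_i)\notin\mathbf{a}$ for every $i$. Suppose $\mathbf{a}$ contained consecutive entries $x<x+1$ with $x\notin 2\Z$. Then $x+1=\s(x)\in\mathbf{a}$, which contradicts (1). So the column contains no interval of the form $[x<\s(x)]$ with $x$ odd. This is exactly the content of point \eqref{intervaltwo}: the only interval factors of length $>1$ are an even number followed by an odd number.

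\textbf{Step 2: conclude symplecticity.} By Proposition \ref{prop:symplectic1}(4), a column with no such pair is symplectic, so $\mathbf{a}\in SpT_{2n}(\varpi_t)$. To make this self-contained I would argue by contradiction using Proposition \ref{prop:symplectic1}(3). If $\mathbf{a}$ were not symplectic, there would be an $i$ with $a_{i-1}=2i-3$ and $a_i=2i-2$. That is an odd entry immediately followed by its $\s$-image, which is impossible by Step 1.

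\textbf{Main obstacle: the bound $t\le n$.} Membership in $SpT_{2n}(\varpi_t)$ requires $t\le n$, since by Proposition \ref{prop:symplectic1}(1) and Corollary \ref{symplecticcolumn}(b) a nonempty $SpT_{2n}(\lambda)$ forces $\ell(\lambda)\le n$. I would get this from the entries directly, by showing $a_k\ge 2k-1$ for all $k$ by induction:
\begin{itemize}
\item Base case: $a_1\ge 1$.
\item Inductive step: if $a_k\ge 2k-1$, then either $a_k\ge 2k$, giving $a_{k+1}\ge a_k+1\ge 2k+1$; or $a_k=2k-1$ is odd, and then point \eqref{interval3} gives $a_{k+1}\ge a_k+2=2k+1$.
\end{itemize}
Taking $k=t$ gives $2n\ge a_t\ge 2t-1$, hence $t\le n$. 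This induction simultaneously re-proves symplecticity without appealing to Proposition \ref{prop:symplectic1}(3), so I would present it as the main argument and keep Steps 1 and 2 as the conceptual explanation.
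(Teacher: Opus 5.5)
Your proof is correct. Steps 1--2 are exactly the paper's proof, which is the one-line appeal to Proposition \ref{prop:symplectic1}. Its point (4) is derived from the first-failure description in point (3), just as you do.

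Your induction, which you propose as the main argument, is a more self-contained variant. It checks $a_k\ge 2k-1$ directly from Definition \ref{def:symp}, using only point (4) of Lemma \ref{lem:swapping}. So it does not rely on the result quoted from \cite{watanabe}. It is the same first-failure idea run forwards, and it is a cleaner two-case version of the three-case induction the paper carries out for the tail $X^{(k)}$ in the proof of Theorem \ref{thm:symplecticdecomposition}. The paper's route is shorter and ties the statement to the odd--even pairs $[x<\s(x)]$ that later drive the reduction map; yours buys independence from the cited result.

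One correction of emphasis: $t\le n$ is not a separate obstacle. $SpT_{2n}(\varpi_t)$ is by definition the set of symplectic columns of length $t$. Once $a_k\ge 2k-1$ holds for all $k$, the bound $t\le n$ follows automatically from $2t-1\le a_t\le 2n$ (Proposition \ref{prop:symplectic1}(2)), which is exactly what your last line shows.
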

\begin{proof} Consequence of  Proposition \ref{prop:symplectic1}.
\end{proof}

\begin{ex}For instance $ (2,3, 6,7 ,10,11,14,15)$  is a symplectic column consisting solely of intervals of length two $[ \s(x),x]$ with $x\in 2\Z$. This is an instance of the $\mathfrak{k}$-highest weight  symplectic columns in \cite{nsw}. More generally, the $\mathfrak{k}$-highest weight and  $\mathfrak{k}$-lowest weight symplectic tableaux are respectively given by the numbers
\begin{align}&u_k=2k-\frac{1+(-1)^k}{2}=\begin{cases}2k,&\text{ if } k \notin 2\Z,\\
2k-1,&\text{ if } k \in 2\Z,\label{numbers:u}
\end{cases}
\end{align}
and
\begin{align}
&v_k=2k-\frac{1+(-1)^{k+1}}{2}=\begin{cases}2k,&\text{ if } k \in 2\Z,\\
2k-1,&\text{ if } k \notin 2\Z. \label{numbers:v}
\end{cases}
\end{align}  whose first columns are in the conditions of Lemma \ref{lem:swapping}. See \cite[Section 5]{azreco}.
\end{ex}

Next corollary characterizes the symplectic columns $\bf a$ that decompose into maximal nonempty intervals   and satisfy $s({\bf a})=\bf a$.  That is,  we characterize  the maximal nonempty intervals  that appear in that decomposition.
\begin{cor} \label{cor:cups} Let $\mathbf{a}=\bigoplus_{i=1}^k {\bf A}_i \in SST_{2n}(\varpi_t)$  for some $k\ge 1$.
Then $\mathbf{a} \in SpT_{2n}(\varpi_t)$ if and only if, for $i=1,\dots,k$,
\begin{align}a_i\ge 2(t_1+\cdots+t_{i-1})+t_i+1.
\end{align}
\end{cor}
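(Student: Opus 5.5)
The plan is to apply Definition \ref{def:symp} directly: a column $(x_1<\dots<x_t)$ is symplectic iff $x_q\ge 2q-1$ for every position $q\in[t]$. The main step is to reduce this family of $t$ inequalities to one inequality per interval ${\bf A}_i$, namely the one at its last position.

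Fix $i$ and set $T_{i-1}:=t_1+\cdots+t_{i-1}$. The entries of ${\bf A}_i$ occupy positions $q=T_{i-1}+j$ for $j=1,\dots,t_i$, with values $a_i+j-1$. The symplectic condition at these positions reads
\begin{align*}
a_i+j-1\ge 2(T_{i-1}+j)-1\iff a_i\ge 2T_{i-1}+j .
\end{align*}
The right-hand side increases with $j$, so the whole block of conditions for ${\bf A}_i$ is equivalent to the single condition at $j=t_i$. That is the bound $a_i\ge 2T_{i-1}+t_i$.

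Next compare this with the stated bound $a_i\ge 2T_{i-1}+t_i+1$ using parity. By Definition \ref{maximalintervaldecompose}, $a_i$ is odd and $t_i$ is even, so $2T_{i-1}+t_i$ is even. Hence for odd $a_i$ the inequality $a_i\ge 2T_{i-1}+t_i$ is equivalent to $a_i\ge 2T_{i-1}+t_i+1$. Alternatively, Proposition \ref{prop:symplectic1}(3) gives the same conclusion: the first failure of the symplectic condition can only occur at an even entry. In the interval ${\bf A}_i$ that even entry is the last one, $a_i+t_i-1$.

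Both directions then follow. Since the blocks ${\bf A}_1,\dots,{\bf A}_k$ partition the positions $1,\dots,t$, the column is symplectic iff every block condition holds, that is, iff $a_i\ge 2T_{i-1}+t_i+1$ for all $i$. There is no real obstacle; the only points needing care are the position bookkeeping $q=T_{i-1}+j$ and the parity step that upgrades the bound by $+1$.
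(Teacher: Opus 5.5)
Your proof is correct and is essentially the paper's argument. Both reduce the symplectic conditions on each block $\mathbf{A}_i$ to the one at its last (even) entry $a_i+t_i-1$, and both use that $a_i$ is odd and the $t_j$ are even to upgrade $a_i\ge 2(t_1+\cdots+t_{i-1})+t_i$ to the stated bound; your monotonicity in $j$ is a direct form of the paper's reverse induction inside each block.

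The only blemish is the optional aside via Proposition~\ref{prop:symplectic1}(3). The first failure need not occur at the last entry of a block: for example, $(1,2,3,4)$ first fails at the entry $2$. Your main argument does not rely on that remark.
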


\begin{proof}  Recall the Definition \ref{maximalintervaldecompose}, for $i=1,\dots, k$, ${\bf A_i}=(a_i,a_i+1,\dots,a_i+t_i-1)\in SST_{2n}(\varpi_{t_i})$ with  $a_i\notin 2\Z$ and  $2\le t_i\in 2\Z$, such that $a_{i+1}-a_i\ge t_i+2$  and $t=t_1+\cdots+t_k\in  2\Z$.

"\emph{Only if part}." If ${\bf a}\in  SpT_{2n}(\varpi_t)$ then for $i=1,\dots,k$,
$$a_i+t_i-1\ge 2(t_1+\cdots+t_i)-1\Leftrightarrow a_i\ge 2(t_1+\cdots+t_{i-1})+t_i.$$
Since $a_i\notin 2\Z$ and $t_1,\dots,t_i\in 2\Z$, then for $i=1,\dots,k$,
$$a_i+t_i-1\ge 2(t_1+\cdots+t_i)-1\Leftrightarrow a_i\ge 2(t_1+\cdots+t_{i-1})+t_i\Leftrightarrow a_i\ge 2(t_1+\cdots+t_{i-1})+t_i+1.$$

"\emph{If} \emph{part}" Let $i\in \{1,\dots,k\}$ and $j\in\{1,\dots, t_i-1\}$. One has
$$a_i\ge 2(t_1+\cdots+t_{i-1})+t_i+1\Leftrightarrow a_i+t_i-1\ge 2(t_1+\cdots+t_i)-1$$
and by reverse induction assume $a_i+j\ge 2(t_1+\cdots+t_{i-1}+j+1)-1$ then
\begin{align}a_i+j-1\ge 2(t_1+\cdots+t_{i-1}+j+1)-1-1 = 2(t_1+\cdots+t_{i-1}+j)>2(t_1+\cdots+t_{i-1}+j)-1.
\end{align}
Therefore, $\bf a$ is symplectic.
\end{proof}
\begin{obs} Note ${\bf A}=(1,2)\in SST_{2n}(\varpi_2)$ is not symplectic because $a_1=1\ngeq t_1+1=3$ although $s({\bf A})=\bf A$.
\end{obs}

Finally, next corollary characterizes the maximal intervals of length $\ge 2$ in a symplectic column if they exist. Here we are not requiring that an interval length of $\ge 2$ is closed under the action of $\s$.

 \begin{cor}\label{cor:maximalinterval} Let $\mathbf{a} \in SST_{2n}(\varpi_t)$   such that the assertions of Lemma \ref{lem:swapping} do not hold. Then
 \begin{enumerate}
 \item
 If $\mathbf{a} \in SpT_{2n}(\varpi_t)$
 then there exists  a maximal  interval $I\subseteq \bf a$   with  $|I|\ge 2$ that does not satisfy point \eqref{intervaltwo} of that lemma.  In this case there exists a maximal interval  ${\bf A_1}=(a_1, a_1+1,\dots, a_1+t_1-1)$ for some $a_1\notin 2\Z$ and  $2\le t_1\in [t]\cap 2\Z$ such that $I$ is of  either form
 \begin{enumerate}
 \item $I=\bf A_1$, where  $a_1-1,~a_1+t_1\notin \bf a$, $\s({I})= I$  and $a_1\ge t_1+1$ or
 \item $I=(a_1-1){\bf A_1} $, where $\s(a_1-1)\notin\bf a$,  $a_1+t_1\notin \bf a$, and $a_1\ge t_1+2+1$ or
 \item  $I={\bf A_1}(a_1+t_1)$, where $a_1-1\notin \bf a$, $\s(a_1+t_1)\notin \bf a$ and $a_1\ge t_1+1$ or
 \item $I=(a_1-1){\bf A} (a_1+t_1)$, where  $\s(a_1-1), \s(a_1+t_1)\notin\bf a$ and $a_1\ge t_1+2+1$.
 \end{enumerate}

 \item If ${\bf a}$   either equals $I=\bf A_1$ where  $a_1\ge t_1+1$, or $I=(a_1-1){\bf A_1} $, where $a_1\ge t_1+2+1$, or $I={\bf A_1}(a_1+t_1)$, where  $a_1\ge t_1+1$ or $I=(a_1-1){\bf A} (a_1+t_1)$, where  $a_1\ge t_1+2+1$, for some $\bf A_1$ as in point $(1)$, then
  $I\in SpT_{2n}(\varpi_{|I|})$.
 \end{enumerate}
 \end{cor}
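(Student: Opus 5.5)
The plan is to use the negation of Lemma \ref{lem:swapping}(3): if the assertions fail, there is an interval factor $[x,y]\subseteq {\bf a}$ that is neither a singleton nor of the form $(x,x+1)$ with $x\in 2\Z$. Equivalently, by $(1)$, some entry $x\in{\bf a}$ has $\s(x)\in{\bf a}$, so ${\bf a}$ contains a pair $(x',x'+1)$ with $x'\notin 2\Z$.

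For point $(1)$, take $I$ to be the maximal interval of ${\bf a}$ containing such a pair; then $|I|\ge 2$ and $I$ violates \eqref{intervaltwo}. Write $I=[c,d]$. Inside $I$, consider the longest subinterval ${\bf A_1}$ that starts at an odd entry and ends at an even entry. Concretely, set $a_1=c$ if $c\notin 2\Z$ and $a_1=c+1$ if $c\in 2\Z$. Set the last entry of ${\bf A_1}$ to be $d$ if $d\in 2\Z$ and $d-1$ otherwise. By construction $t_1$ is even, $a_1$ is odd, and $t_1\ge 2$ because $I$ contains an odd-then-even pair. Lemma \ref{interval} then gives $\s({\bf A_1})={\bf A_1}$.

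The four cases (a)--(d) are exactly the four parity combinations of the endpoints $(c,d)$, namely $c\in\{a_1,a_1-1\}$ and $d\in\{a_1+t_1-1,a_1+t_1\}$. The conditions on non-membership are handled as follows.
\begin{enumerate}
\item Maximality of $I$ gives $c-1,d+1\notin{\bf a}$.
\item When $c=a_1-1$ is even, $\s(c)=c-1\notin{\bf a}$.
\item When $d=a_1+t_1$ is odd, $\s(d)=d+1\notin{\bf a}$.
\end{enumerate}

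The inequalities come from the symplectic condition (Definition \ref{def:symp}) applied at the position of the largest entry of ${\bf A_1}$. If $c=a_1$, there are at least $t_1$ entries up to $a_1+t_1-1$, so $a_1+t_1-1\ge 2t_1-1$, that is, $a_1\ge t_1$. Since $a_1$ is odd and $t_1$ is even, this gives $a_1\ge t_1+1$, exactly as in the proof of Corollary \ref{cor:cups}. If $c=a_1-1$, there are at least $t_1+1$ entries up to $a_1+t_1-1$, so $a_1+t_1-1\ge 2t_1+1$, that is, $a_1\ge t_1+2$; parity upgrades this to $a_1\ge t_1+3$. The extra entry $a_1+t_1$ in cases (c) and (d) yields no stronger bound, so the bounds coincide with those of (a) and (b).

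The main subtlety is this parity-upgrade step, together with making sure the count of entries is a lower bound on position. Position is at least the number of entries of $I$ weakly below the given entry, and since the condition $G(k,1)\ge 2k-1$ only gets harder as $k$ increases, a lower bound on position suffices.

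For point $(2)$, ${\bf a}=I$ is a single interval, so I check Definition \ref{def:symp} directly at each position. The constraint $G(k,1)\ge 2k-1$ reads entry $-(2k-1)\ge 0$, and this quantity decreases by one at each step down the interval. It therefore suffices to check it at the last row.
\begin{enumerate}
\item In case (a), the last entry is $a_1+t_1-1\ge 2t_1-1$, which follows from $a_1\ge t_1+1$ (compare the reverse induction in Corollary \ref{cor:cups}).
\item In case (b), the last entry is $a_1+t_1-1\ge 2(t_1+1)-1$, which follows from $a_1\ge t_1+3$.
\item In cases (c) and (d), the last entry is $a_1+t_1$, and the needed bounds $a_1\ge t_1+1$ and $a_1\ge t_1+3$ respectively follow immediately.
\end{enumerate}
Since the slack only increases moving up the interval, every row satisfies the condition. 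Hence $I\in SpT_{2n}(\varpi_{|I|})$.
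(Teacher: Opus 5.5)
Your proposal is correct and follows essentially the same route as the paper: take a maximal interval $I$ violating point (3), trim an even left endpoint and an odd right endpoint to obtain the $\s$-fixed interval ${\bf A_1}$, read off the four cases (a)--(d) from the endpoint parities, and for point (2) verify the symplectic condition at the last entry of $I$. Your version is slightly more complete than the paper's. You derive the bounds $a_1\ge t_1+1$ and $a_1\ge t_1+3$ uniformly in all four cases, from a lower bound on the row of $a_1+t_1-1$ plus a parity upgrade, whereas the paper does this explicitly only when $|I|=2$. You also justify why checking the last row suffices, which the paper leaves implicit.
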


 \begin{proof} $(1)$ If $\mathbf{a} \in SpT_{2n}(\varpi_t)$ is not in the conditions of Lemma \ref{lem:swapping}  then there exists a maximal interval $I$ contained in $\bf a$ that does not satisfy condition  point \eqref{intervaltwo} of that lemma.  Therefore,  $I=[x,y]\subseteq \bf a$ where  $x<y$ and $x-1, y+1\notin\bf a$, is a maximal interval in $\bf a$, and by point \eqref{intervaltwo} in Lemma \ref{lem:swapping},  either $I=[x,x+1]=\{x,x+1\}$ with $x\notin 2\Z$ and $x-1, x+2\notin \bf a$, or $|I|\ge 3$. In the former case, $t=2$  and $I=\{a,a+1,\dots,a+t-1\}=\{a,a+1\}$ for some $a\notin 2\Z$. Since $a\notin 2\Z$, and $a=1\Rightarrow a+1=2\ngeq 2\times 2-1$,   and $\bf a$ is symplectic it forces that $a\ge 3=t+1$ with $t=2$.

 In the latter case, $|I|\ge 3$ and $I\subseteq\bf a$ is maximal. If $x$ is the least element of $I$ and $x\in2\Z$ then $s(x)=x-1\notin \bf a$ because  $I$ is maximal. If $y$ is the largest element of $I$ and $y\notin 2\Z$  then $s(y)=y+1\notin \bf a$. This means that in those cases $I\setminus \{x,y\}$ is an interval such that $s(I\setminus \{x,y\})=I\setminus \{x,y\}$ and  $I$ satisfies points $(b)$, or  $(c)$ or $(d)$. If the least and largest elements of $I$ are respectively odd and even then $I$ satisfies point $(a)$.

 $(2)$ If $I=(a_1-1){\bf A_1} $, where $a_1\ge t_1+2+1$, then
 $$a_1\ge t_1+2+1\Leftrightarrow a_1+t_1-1\ge t_1+2+1+t_1-1= 2(t_1+1)> 2(t_1+1)-1.$$

 If $I={\bf A_1}(a_1+t_1)$, where  $a_1\ge t_1+1$ then $$a_1\ge t_1+1\Leftrightarrow a_1+ t_1\ge 2t_1+1= 2(t_1+1)-1$$

If $I=(a_1-1){\bf A} (a_1+t_1)$ where  $a_1\ge t_1+2+1$, it remains to show that
$$a_1\ge t_1+2+1\Leftrightarrow a_1+t_1\ge t_1+2+1+t_1=2(t_1+2)-1.$$

  Therefore $I$ is symplectic.
 \end{proof}

Finally we characterize the symplectic columns via its decomposition by detaching the   subword that is by fixed the parity involution $\s$, this is
 the union of maximal nonempty intervals invariant  under the action of the parity involution $\s$,  and its remaining part  that consists of the subword that under the action of the parity involution is disjoint of it.

 \begin{thm}\label{thm:symplecticdecomposition}Let ${\bf a}\in SST_{2n}(\varpi_t)$. Then
  ${\bf a}\in SpT_{2n}(\varpi_t)$ if and only if, for some $k\ge 0$,
\begin{align}\label{generalsymplecticdecompose}
{\bf a}=
X^{(0)}{\bf A_1} X^{(1)}{\bf A_2}\cdots {\bf A_{k-1}}X^{(k-1)} {\bf A_k} X^{(k)},
\end{align}
where 
\begin{enumerate}
\item  for $i=0,1,\dots,k$, each $X^{(i)}$ is a column  word of length $p_i\ge 0$, such that

\begin{align}\s(X^{(0)}X^{(1)}\cdots X^{(k)})\cap {\bf a}=\emptyset,
\end{align}

\item for $ i=1,\dots,k $, each interval ${\bf A_i}=(a_i, a_i+1,\dots,a_i+t_i-1)$, $a_i\notin 2\Z$, is
a maximal nonempty interval  of even length $2\le t_i\in  2\Z$, that is, each $\bf A_i$ is a maximal nonempty interval such that $s(\bf A_i)=A_i$,   and
$$t=\sum_{i=1}^kt_i +\sum_{i=0}^k p_i  \in [n],$$
satisfying
\begin{align} \label{3}
&a_i\ge 2t^{(i-1)}+t_i+1, ~\mbox{ with $\displaystyle t^{(i-1)}:=\sum_{j=1}^{i-1}t_j+\sum_{j=0}^{i-1}p_j$, ~$t^{(0)}:=p_0$, and $t^{(k)}:=t$.}
\end{align}
\end{enumerate}
Moreover, this decomposition is unique in the sense that the maximal nonempty intervals ${\bf A_i}$ fixed by $\s$ are uniquely determined by $\bf a$, in the sense that their union constitute the part of $\bf a $ that is fixed by $\s$,  and what remains  is the subword ${\bf a} \setminus \bigoplus_{i=1}^k {\bf A_i}$ of $\bf a$ such that $\s({\bf a} \setminus \bigoplus_{i=1}^k {\bf A_i})\cap {\bf a}=\emptyset$.
\end{thm}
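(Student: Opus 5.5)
The plan is to separate the purely combinatorial decomposition, which exists for \emph{every} column ${\bf a}\in SST_{2n}(\varpi_t)$, from the symplectic condition. Once the decomposition is in place, symplecticity should reduce to the inequalities \eqref{3}, with Proposition \ref{prop:symplectic1}(3) doing the main work. Uniqueness should come for free from an intrinsic description of the pieces.

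\emph{Step 1 (existence and uniqueness of the decomposition).} Set $F:=\{x\in{\bf a}:\ \s(x)\in{\bf a}\}={\bf a}\cap\s({\bf a})$. Since $\s$ is an involution, $\s(F)=F$. If $F\neq\emptyset$, Lemma \ref{intervals} gives $F=\bigoplus_{i=1}^k{\bf A_i}$, where the ${\bf A_i}=(a_i,\dots,a_i+t_i-1)$ are maximal intervals with $a_i\notin2\Z$ and $2\le t_i\in2\Z$. Each ${\bf A_i}$ is a maximal nonempty $\s$-fixed interval of ${\bf a}$: any larger interval of ${\bf a}$ fixed by $\s$ would lie inside $F$, and the ${\bf A_i}$ are maximal in $F$. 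Put $X:={\bf a}\setminus F$. If $x\in X$ and $\s(x)\in{\bf a}$, then $x\in F$, which is impossible; hence $\s(X)\cap{\bf a}=\emptyset$.

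Because each ${\bf A_i}$ is an interval of integers, its entries occupy consecutive positions of the column. Consequently ${\bf a}$ reads as $X^{(0)}{\bf A_1}X^{(1)}\cdots{\bf A_k}X^{(k)}$, where $X^{(i)}$ is the part of $X$ lying strictly between ${\bf A_i}$ and ${\bf A_{i+1}}$. The entries of ${\bf A_i}$ then sit in rows $t^{(i-1)}+1,\dots,t^{(i-1)}+t_i$.

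Conversely, suppose any decomposition as in the statement is given. Then $\bigsqcup{\bf A_i}\subseteq F$ since each piece is $\s$-fixed. Also $F\cap X=\emptyset$ since $\s(X)\cap{\bf a}=\emptyset$. Hence $\bigsqcup{\bf A_i}=F$, and the decomposition is unique.

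\emph{Step 2 (only if).} Assume ${\bf a}$ is symplectic. Then $t\le n$ by Proposition \ref{prop:symplectic1}(2). The last entry of ${\bf A_i}$ lies in row $t^{(i-1)}+t_i$, so
$$a_i+t_i-1\ge 2(t^{(i-1)}+t_i)-1,\quad\text{that is,}\quad a_i\ge 2t^{(i-1)}+t_i.$$
Since $a_i$ is odd and $2t^{(i-1)}+t_i$ is even, this sharpens to \eqref{3}, exactly as in the proof of Corollary \ref{cor:cups}.

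\emph{Step 3 (if).} This is the only step needing care. Suppose \eqref{3} holds but ${\bf a}$ is not symplectic. By Proposition \ref{prop:symplectic1}(3) there is a row $i$ with ${\bf a}(i-1)=2i-3$ and ${\bf a}(i)=2i-2=\s(2i-3)$. Both entries then lie in $F$, hence in a single ${\bf A_j}$, and we may write ${\bf a}(i)=a_j+m$ with $i=t^{(j-1)}+m+1$ and $1\le m\le t_j-1$.

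From \eqref{3},
$$a_j+m\ge 2t^{(j-1)}+t_j+1+m\ge 2t^{(j-1)}+2m+2>2i-2,$$
where the middle inequality uses $t_j\ge m+1$. This contradicts ${\bf a}(i)=2i-2$, so ${\bf a}$ is symplectic. Equivalently, the entries of ${\bf A_j}$ satisfy the row bound by the reverse induction already used in Corollary \ref{cor:cups}. The $X$-entries need no check, because Proposition \ref{prop:symplectic1}(3) locates the first failure at an odd–even consecutive pair, and no such pair meets $X$.

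I expect the main obstacle to be exactly this last point: justifying that the entries of the $X^{(i)}$ never need to be checked. The intended route is that a first failure always lies in $F$. If Proposition \ref{prop:symplectic1}(3) turns out to be too terse for this, I would instead argue directly: if the first failing row $i$ carries an entry $x$ with $\s(x)\notin{\bf a}$, compare with the entry in row $i-1$ to produce an earlier failure, contradicting the minimality of $i$.
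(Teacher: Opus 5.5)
Your proof is correct. Steps 1 and 2 follow the paper's ``only if'' argument essentially verbatim: the splitting $\mathbf{a}=U\sqcup V$ with $U=\mathbf{a}\cap\s(\mathbf{a})$, then Lemma~\ref{intervals}, then the parity sharpening of the row bound for the last entry of each $\mathbf{A_i}$. Your uniqueness argument is slightly more complete than the paper's, since you show that \emph{any} decomposition satisfying the hypotheses has $\bigsqcup_i\mathbf{A_i}=\mathbf{a}\cap\s(\mathbf{a})$, rather than only that the canonical one is determined by $\mathbf{a}$.

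The genuine difference is in the ``if'' direction. The paper argues by induction on $k$. Its base case $k=0$ is Corollary~\ref{cor:withoutA}. In the inductive step it checks the row bounds directly on the suffix $\mathbf{A_k}X^{(k)}$: first the last entry of $\mathbf{A_k}$, using \eqref{3}, and then $y_1,\dots,y_{p_k}$ by a second induction. That second induction uses a three-way parity case analysis, with $\s(y_j)\notin\mathbf{a}$ forcing gaps of size $2$ and parity rounding handling the even-then-odd case.

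You instead use the ``moreover'' clause of Proposition~\ref{prop:symplectic1}(3) once, globally. The first failure is an odd--even pair $\{2i-3,\s(2i-3)\}$, so it lies in $\mathbf{a}\cap\s(\mathbf{a})$ and hence in a single $\mathbf{A_j}$. Then \eqref{3} gives $\mathbf{a}(i)\ge 2i$, a contradiction. This is the same mechanism the paper invokes only for $k=0$, via Proposition~\ref{prop:symplectic1}(4). Applying it once removes both inductions and all parity cases, and it shows conceptually why the words $X^{(i)}$ impose no condition at all. The paper's route, in exchange, produces explicit entrywise lower bounds for the entries of $X^{(k)}$, in the hands-on style of Section~\ref{sec:inverseredu}.

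The obstacle you anticipate at the end is not one. The clause you use follows at once from strict increase: $\mathbf{a}(i)>\mathbf{a}(i-1)\ge 2i-3$ together with $\mathbf{a}(i)\le 2i-2$ forces $\mathbf{a}(i-1)=2i-3$ and $\mathbf{a}(i)=2i-2$. So no fallback argument is needed.
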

\begin{proof} "\emph{Only} \emph{if} \emph{part}."  Considering ${\bf a}\in SpT_{2n}(\varpi_t) $ as a set,
\begin{align}{\bf a}=\{x\in {\bf a}: \s(x)\in \bf a\}\sqcup \{x\in {\bf a}: \s(x)\notin \bf a\}.\label{decompositionbys}
\end{align}

Let $U:=\{x\in {\bf a}: \s(x)\in \bf a\}$ and $V=\{x\in {\bf a}: \s(x)\notin \bf a\}$. Then by definition of $V$, $\s(V)\cap{\bf a}=\emptyset$, and since $\s$ is an involution,  $\s(U)=U$. Therefore the sets $U$ e $V$ are uniquely defined by $\bf a$ and the decomposition \eqref{decompositionbys} is uniquely defined by $\bf a$.  By Lemma \ref{intervals}, if $U\neq \emptyset$,  $U=\bigoplus_{i=1}^k {\bf A}_i $ for some $k\ge 1$,  where each ${\bf A_i}=(a_i, a_i+1,\dots,a_i+t_i-1)$, with $a_i\notin 2\Z$ and $2\le t_i\in 2\Z$, and, by definition of $U$,  is a maximal nonempty interval of $\bf a$ such that $\s({\bf A_i})=\bf A_i$.

Henceforth,
decomposing ${\bf a}\in SpT_{2n}(\varpi_t)$ into maximal intervals of length $\ge 2$ and  detaching those $\bf A_i$  which are fixed by $\s$,
  we get  $U=\bigoplus_{i=1}^k {\bf A}_i $ for some $k\ge 0$, and
 $$V={\bf a} \setminus \bigoplus_{i=1}^k {\bf A}_i.$$

Thus,  \eqref{generalsymplecticdecompose} is the decomposition that we get for $\bf a$ as a column word. Since $\bf a$ is symplectic the intervals $\bf A_i$ are forced to satisfy conditions \eqref{3}, and by definition of $V$, the column word defined by $V$ satisfy the conditions of Lemma \ref{lem:swapping}.

"\emph{If} \emph{part}." By induction on $k$. For $k=0$, one has ${\bf a}=X_0$ in the conditions of Lemma \ref{lem:swapping}, hence, by Corollary \ref{cor:withoutA},  ${\bf a}=X^{(0)}$ is symplectic.
Let $k\ge 1$ and assume the assertion true for $k-1$. Let
\begin{align}{\bf a}=
X^{(0)}{\bf A_1} X^{(1)}{\bf A_2}\cdots {\bf A_{k-1}}X^{(k-1)} {\bf A_k} y_{1}\cdots y_{p_k}, \quad X^{(k)}=y_{1}\cdots y_{p_k}.
\end{align}
We have to prove that the piece ${\bf A_k} y_{1}\cdots y_{p_k}$ as  a suffix  of $\bf a$ is symplectic.
From point \eqref{3}, \begin{align*}& a_k\ge 2(t_1+\cdots+t_{k-1})+ 2(p_0+p_1+\cdots+p_{k-1})+t_k+1\\
&\Leftrightarrow\\
 &a_k+t_{k}-1\ge 2(t_1+\cdots+t_{k-1})+ 2(p_0+p_1+\cdots+p_{k-1})+t_k+1+t_k-1=\\
 &=2(t_1+\cdots+t_{k-1}+t_k)+ 2(p_0+p_1+\cdots+p_{k-1})\\
& >2(t_1+\cdots+t_{k-1}+t_k)+ 2(p_0+p_1+\cdots+p_{k-1})-1.
 \end{align*}

On the other hand $\s(y_j)\notin \bf a$ for $j=1,\dots,p_k$.

If $y_1\in 2\Z$, since $a_k+t_k-1\notin 2\Z$, then
\begin{align*}&y_1\ge a_k+t_k-1+2\ge 2\sum_{i=1}^{k-1}t_i+2\sum_{i=0}^{k-1}p_i+t_k+1+t_k+1, \mbox{ by point \eqref{3}}\\
&=2\sum_{i=1}^{k-1}t_i+2(\sum_{i=0}^{k-1}p_i+1)>2(\sum_{i=1}^{k-1}t_i+\sum_{i=0}^{k-1}p_i+1)-1
\end{align*}

If $y_1\notin 2\Z$ and $y_1=a_k+t_k-1+1$, then
\begin{align*}&y_1\ge a_k+t_k\ge 2\sum_{i=1}^{k-1}t_i+2\sum_{i=0}^{k-1}p_i+t_k+1+t_k, \mbox{ by point \eqref{3}}\\
&=2\sum_{i=1}^{k-1}t_i+2(\sum_{i=0}^{k-1}p_i+1)-1\\
&=2(\sum_{i=1}^{k-1}t_i+\sum_{i=0}^{k-1}p_i+1)-1.
\end{align*}

Write $p:=p_k\ge1$. Let $p>1$ and by induction assume
\begin{align}\label{induction}y_j\ge 2(\sum_{i=1}^{k-1}t_i+\sum_{i=0}^{k-1}p_i+j)-1, \mbox{ for $j=1,\dots, p-1$.}
\end{align}

 Then since $\s(y_{j})\notin \bf a$ for any $j\ge 1$, and similarly to the proof of Corollary \ref{cor:withoutA}, one has three cases:
\begin{itemize}
\item $y_p\in 2\Z\Rightarrow s(y_p)=y_p-1\notin \bf a$  and  $y_p\ge y_{p-1}+2$. Then by induction hypothesis,
$$y_p\ge y_{p-1}+2\ge 2(\sum_{i=1}^{k-1}t_i+\sum_{i=0}^{k-1}p_i+p-1)-1+2=2(\sum_{i=1}^{k-1}t_i+\sum_{i=0}^{k-1}p_i+p)-1.$$

\item $y_p\notin 2\Z$ and  $y_{p-1}\notin 2\Z\Rightarrow \s(y_{p-1})=y_{p-1}+1\notin \bf a$. Then $y_p\ge y_{p-1}+2$ and  by induction hypothesis $$y_p\ge  y_{p-1}+2\ge 2(\sum_{i=1}^{k-1}t_i+\sum_{i=0}^{k-1}p_i+p-1)-1+2=2(\sum_{i=1}^{k-1}t_i+\sum_{i=0}^{k-1}p_i+p)-1$$

    \item  $y_p\notin 2\Z$ and  $y_{p-1}\in 2\Z$. If $y_p=y_{p-1}+1$ and $p=2$ then ${\bf a}=y_1(y_1+1)$ and $2\le y_1\in 2\Z$. In this case $y_1+1\ge 3=2\times 2-1$.

        If $y_p=y_{p-1}+1$ and $p\ge 3$ then since $\s(y_{p-1})=y_{
        p-1}-1 \notin \bf a$ one has
        \begin{align}{\bf a}=\cdots y_{p-2}y_{p-1}(y_{p-1}+1) \mbox{ and $y_{p-2}\le y_{p-1}-2$}
        \end{align}
        Therefore $y_{p-1}\ge y_{p-2}+2$ and $y_p=y_{p-1}+1\ge y_{p-2}+3$. By induction hypothesis \eqref{induction} and since $y_p\notin 2\Z$
        $$y_p\ge y_{p-2}+3\ge 2(\sum_{i=1}^{k-1}t_i+\sum_{i=0}^{k-1}p_i+p-2)-1+3=2(\sum_{i=1}^{k-1}t_i+\sum_{i=0}^{k-1}p_i+p)-2\in 2\Z.$$
        Henceforth
        $$y_p\ge 2(\sum_{i=1}^{k-1}t_i+\sum_{i=0}^{k-1}p_i+p)-2+1=2(\sum_{i=1}^{k-1}t_i+\sum_{i=0}^{k-1}p_i+p)-1.$$
\end{itemize}
Therefore $\bf a$ is symplectic.
\end{proof}

\section{The  inverse of the reduction map}\label{sec:inverseredu}
\subsection{ The reduction map}   For the reader convenience this section recalls several properties of removal and reduction maps in \cite{watanabe} and \cite{azreco}.
We fix $l \in [0, 2n]$ and $\mathbf{a} = (a_1, \dots , a_l)$ a column  in $SST_{2n}(\varpi_l)$.
Often  $\mathbf{a}$ is regarded as a set.
The \emph{removal} \emph{subword }of $\mathbf{a}$ \cite{watanabe} is defined to be the subword $\mathrm{rem}(\mathbf{a})$ of  $\mathbf{a}$ obtained by the following recursive formula:

\begin{align}\label{removal}
\mathrm{rem}(\mathbf{a}) :=
\begin{cases}
\emptyset,& \mbox{ if } l\le 1,\\
\mathrm{rem}(a_l,\dots, a_{l-2})  (a_{l-1},a_l),& \mbox{ if } l\ge 2, a_l\in   2\mathbb{Z}, a_{l-1} = a_l-1,\\
 &\mbox{ and }
a_l < 2l -|\rem(a_1,\dots,a_{l-2})|-1,\\
\rem(a_1,\dots,a_{l-1})& \mbox{ otherwise}.
\end{cases}
\end{align}

\begin{defi}\cite{watanabe} \label{def:rem} For the column
$\mathbf{a}$, the new column $\mathrm{red}(\mathbf{a})$, reduction of $\mathbf{a}$, is defined to be the one obtained from $\mathbf{a}$ by removing the entries in
the set $\mathrm{rem}(\mathbf{a})$,
$$\redu( \mathbf{a})=\mathbf{a}\setminus \rem(\mathbf{a})\in SST_{2n}(\varpi_{l-|\rem(\mathbf{a})|}).$$
\end{defi}
In fact  Proposition \ref{prop:redu} \cite{watanabe} shows that $\redu( \mathbf{a})\in SpT_{2n}(\varpi_{l-|\rem(\mathbf{a})|})$, where $l-|\rem(\mathbf{a})|$, satisfy further conditions.

We recall  useful properties of removable entries in $\mathbf{a}$. In particular, we highlight the parity swapping involution $\s$ with a meaningful role in the removal of pairs and consequently in the reduction map and  in our computation of the inverse reduction map $\redu$.

\begin{prop}\label{prop:removcontain}\cite[Proposition 2.5.3]{watanabe}
\begin{enumerate}
\item[(1)] If $ a_l \in \rem(\mbf{a})$, then $a_l\in 2\mathbb{Z}$.

\item[(2)] If $a_l\notin \rem(\mbf{a})$ then $ \rem(\mbf{a})= rem(a_1,\dots, a_{l-1})$.

\item[(3)] If $a_l$ is odd then $ a_l\notin \rem(a)$, and $\rem(\mbf{a})= \rem(a_1,\dots, a_{l-1})$.

\item[(4)] If $a_l \in \rem(\mbf{a})$, then $a_l\in 2\mathbb{Z}$ and $\rem(a_1,\dots, a_{l-1})= \rem(a_1,\dots, a_{l-2})$.

\item [(5)] $\rem(a_1,\dots, a_k) \subset \rem(\mbf{a})$ for all $k \in[0,l]$.

\end{enumerate}
\end{prop}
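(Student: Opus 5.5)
The statement is Proposition \ref{prop:removcontain}. I would derive all five points directly from the recursive definition \eqref{removal}, by induction on $l$, handling the three branches of the recursion at the last letter $a_l$ in turn.

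\emph{Points (1) and (4).} A letter $a_l$ can enter $\rem(\mathbf{a})$ only through the second branch, since the third branch returns a subword of $(a_1,\dots,a_{l-1})$. That branch requires $a_l\in 2\Z$, which gives (1). In that branch we also have $a_{l-1}=a_l-1\in\rem(\mathbf{a})$. For (4) the task is therefore to show that $a_{l-1}\notin\rem(a_1,\dots,a_{l-1})$. Since $a_{l-1}$ is odd, this follows from point (1) applied to the shorter column $(a_1,\dots,a_{l-1})$. Then $\rem(a_1,\dots,a_{l-1})$ falls into the third branch, so it equals $\rem(a_1,\dots,a_{l-2})$.

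\emph{Points (2) and (3).} If $a_l\notin\rem(\mathbf{a})$, then the second branch did not apply, because that branch always appends $a_l$. So we are in the third branch, which reads literally $\rem(\mathbf{a})=\rem(a_1,\dots,a_{l-1})$. This is (2). If $a_l$ is odd, the second branch is excluded by parity, so $a_l\notin\rem(\mathbf{a})$, and (2) gives (3).

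\emph{Point (5).} Here I would use induction on $l-k$, and it suffices to prove $\rem(a_1,\dots,a_{l-1})\subset\rem(\mathbf{a})$. In the third branch the two sets are equal. In the second branch, $\rem(\mathbf{a})\supseteq\rem(a_1,\dots,a_{l-2})$, and by (4) this equals $\rem(a_1,\dots,a_{l-1})$. Chaining these inclusions gives the claim for every $k\in[0,l]$. The base cases $l\le1$ are trivial because $\rem$ is empty there.

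The only delicate step is (4). One must check that the odd letter $a_{l-1}$ really cannot lie in $\rem(a_1,\dots,a_{l-1})$. This requires applying (1) to a shorter column inside the same induction, so (1) has to be proved before (4) at each length.
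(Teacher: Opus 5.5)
Your proposal is correct. The paper gives no proof of its own for this proposition; it simply quotes Watanabe's Proposition 2.5.3. Unwinding the three branches of the recursion \eqref{removal} is the natural proof. You correctly read the second branch as $\rem(a_1,\dots,a_{l-2})(a_{l-1},a_l)$; the displayed $\rem(a_l,\dots,a_{l-2})$ is a typo.

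One simplification: the step you call delicate in (4) needs no induction. For the column $(a_1,\dots,a_{l-1})$, the second branch requires its last letter to be even, but $a_{l-1}=a_l-1$ is odd. So that column falls directly into the first or third branch, which gives $\rem(a_1,\dots,a_{l-1})=\rem(a_1,\dots,a_{l-2})$; when $l=2$ both sides are empty.

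Likewise, (1)--(3) follow in one step from the definition. They use only the evident fact that $\rem(\mathbf{b})$ is a subword of $\mathbf{b}$, together with the distinctness of column entries. Only (5) genuinely needs induction, through the chaining over prefixes that you describe.
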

In the next proposition points $(2)$ and $(3)$ together  are an alternative to the usage of Definition \ref{def:rem} to compute $\rem(\bf a)$  allowing its computation easily.

\begin{prop}\cite[Proposition 4.2.2, 4.2.7, Corollary 4.2.8]{watanabe}\label{prop:rem}
\begin{enumerate}

\item[(1)] If $i\in [1,l]$ and $a_j\notin \rem(\mbf{a})$, for $j\in[i,l]$ then $\rem(\mbf{a})=\rem(a_1,\dots,a_{i-1})$.

\item[(2)] for each $i\in [1,l]$, $a_i\in \rem(\mbf{a})$ if and only if one of the following holds
\begin{enumerate}
\item[(a)] $a_i$ odd, $i<l$, $a_{i+1}=a_i+1$ and $a_i<2i-|\rem(a_1,\dots,a_{i-1})|$

\item[(b)] $a_i$ even, $i>1$, $a_{i}=a_{i-1}+1$ and $a_i<2i-|\rem(a_1,\dots,a_{i-2})|-1$
\end{enumerate}

\item [(3)]  $a_i \in \rem(\mbf{a})$ if and only if $\s(a_i) \in \rem(\mbf{a})$.
Consequently, $|\rem(\mbf{a})|\in 2\mathbb{Z}$.



\end{enumerate}

\end{prop}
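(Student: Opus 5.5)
The plan is to argue directly from the recursive definition \eqref{removal} of $\rem$, together with Proposition \ref{prop:removcontain}. The basic observation is structural. Each step of the recursion either leaves $\rem$ of a shorter prefix unchanged, or appends the single pair $(a_{l-1},a_l)$ with $a_l\in 2\Z$ and $a_{l-1}=a_l-1$, i.e.\ the pair $(a_{l-1},\s(a_{l-1}))=[x<\s(x)]$ with $x$ odd. The pair branch recurses on the prefix $(a_1,\dots,a_{l-2})$, so the positions of the pairs are pairwise disjoint. Hence, by induction on $l$:

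\begin{itemize}
\item $\rem(\mathbf{a})$ is a disjoint union of position-pairs $(i-1,i)$ with $a_i$ even and $a_{i-1}=a_i-1$.
\item Such a pair enters exactly at the stage where the prefix $(a_1,\dots,a_i)$ is processed.
\item By Proposition \ref{prop:removcontain}(5), a pair that has entered is never lost at later stages.
\end{itemize}

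I would state this as a preliminary lemma and then derive (1)--(3) from it.

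For point (1), I will use downward induction on $l$. If $a_l\notin\rem(\mathbf{a})$, then Proposition \ref{prop:removcontain}(2) gives $\rem(\mathbf{a})=\rem(a_1,\dots,a_{l-1})$. Since this latter set is contained in $\rem(\mathbf{a})$, none of $a_i,\dots,a_{l-1}$ lies in it, so we may iterate down to the prefix of length $i-1$.

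Point (3) is immediate from the lemma. Every element of $\rem(\mathbf{a})$ lies in a pair $\{x,\s(x)\}$ with $x$ odd, and these pairs are disjoint, so $\rem(\mathbf{a})$ is $\s$-stable and has even cardinality.

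For point (2), I will use the lemma to show the following: $a_i\in\rem(\mathbf{a})$ if and only if either $a_i$ is the even (top) entry of the pair added at stage $i$, or $a_i$ is the odd (bottom) entry of the pair added at stage $i+1$. These two cases are exclusive by parity.

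\begin{itemize}
\item \emph{Case (b), $a_i$ even.} By Proposition \ref{prop:rem}(1) and (5), we have $a_i\in\rem(\mathbf{a})$ iff $a_i\in\rem(a_1,\dots,a_i)$. By the middle branch of \eqref{removal} applied with $l=i$, this holds iff $i>1$, $a_{i-1}=a_i-1$ and $a_i<2i-|\rem(a_1,\dots,a_{i-2})|-1$.
\item \emph{Case (a), $a_i$ odd.} Here $a_i\in\rem(\mathbf{a})$ iff the pair $(a_i,a_{i+1})$ is added at stage $i+1$. By the definition with $l=i+1$, this holds iff $i<l$, $a_{i+1}=a_i+1$ is even, and $a_{i+1}<2(i+1)-|\rem(a_1,\dots,a_{i-1})|-1$. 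Substituting $a_{i+1}=a_i+1$, the last inequality rewrites as $a_i<2i-|\rem(a_1,\dots,a_{i-1})|$.
\end{itemize}

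The main obstacle is making the structural lemma rigorous. Specifically, I must show that an entry's membership is decided exactly at stage $i$ or $i+1$ and never changes afterwards. In particular, an odd $a_i$ whose stage-$(i+1)$ test fails must never be picked up later. This holds because later stages only append pairs ending at later positions, and those pairs begin at positions $\ge i+1$. I would handle this by a careful induction on $l$, tracking the set of positions of $\rem(a_1,\dots,a_l)$.
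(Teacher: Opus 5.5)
Your proof is correct. The paper does not prove this proposition itself; it imports it from Watanabe (Propositions 4.2.2, 4.2.7 and Corollary 4.2.8). Your derivation straight from the recursion \eqref{removal} is therefore a self-contained alternative, not a reconstruction of an argument in the text.

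Your structural lemma is cleanest as a closed formula. Let $C(m)$ denote the middle-branch condition of \eqref{removal} at length $m$; it depends only on the prefix $(a_1,\dots,a_m)$. Then $\rem(a_1,\dots,a_l)=\bigsqcup_{2\le m\le l,\ C(m)}\{a_{m-1},a_m\}$. Induction on $l$ proves this once you note that $C(m)$ forces $a_{m-1}$ odd. Hence $C(m-1)$ fails and $\rem(a_1,\dots,a_{m-1})=\rem(a_1,\dots,a_{m-2})$, so the pair branch loses nothing. The same parity remark shows $C(m)$ and $C(m+1)$ never hold together, which is your disjointness.

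Items (1) and (3) then follow at once. Your case analysis for (2) is also right, including the rewriting of $a_{i+1}<2(i+1)-|\rem(a_1,\dots,a_{i-1})|-1$ as $a_i<2i-|\rem(a_1,\dots,a_{i-1})|$.

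One citation should be fixed. In case (b) you invoke ``Proposition \ref{prop:rem}(1) and (5)'', but that proposition has no item (5). Its item (1) also does not give $a_i\in\rem(\mbf{a})\Rightarrow a_i\in\rem(a_1,\dots,a_i)$. That direction comes from your lemma: an even entry can only enter as the top of the pair added at stage $i$. The converse direction is Proposition \ref{prop:removcontain}(5).

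Compared with the bare citation, your route makes explicit the stage-by-stage bookkeeping that the paper later uses implicitly. It checks removal inequalities this way in the proofs of Theorem \ref{thm:noconsecutive} and Lemma \ref{lem:reductionsurj}.
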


\begin{cor} \label{empty0} For $l \in [0, 2n]$,

\begin{enumerate}
\item[(1)] $l$ even $\Rightarrow \rem(1,2,\dots,l)=(1,2,\dots,l)=\redu(1,2,\dots,l)=\emptyset$.

\item[(2)] $l$ odd $\Rightarrow \rem(1,2,\dots,l)=\rem(1,2,\dots,l-1)= (1,2,\dots,l-1)\Rightarrow \redu(1,2,\dots,l)=\{l\}$.

\item[(3)] $\rem(\mbf{a})=\mbf{a}$ if and only if $l$ is even and $\mbf{a}=(1,2,\dots,l)$.

\item[(4)] $\redu(\mathbf{a})=\emptyset$ if and only if $l$ is even and $\mathbf{a}=(1,2,\dots,l)$.

\end{enumerate}
\end{cor}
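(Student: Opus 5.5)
The plan is to prove the four points in order, with (1) by induction on even $l$ from the recursive definition \eqref{removal}, (2) from Proposition \ref{prop:removcontain}, and (3)--(4) from the membership criterion in Proposition \ref{prop:rem}.

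\emph{Point (1).} I induct on even $l$, the case $l=0$ being trivial since $\rem(\emptyset)=\emptyset$. For $l\ge 2$ even, take ${\bf a}=(1,\dots,l)$. Then $a_l=l\in 2\Z$ and $a_{l-1}=l-1=a_l-1$. By the induction hypothesis $|\rem(1,\dots,l-2)|=l-2$. The remaining condition in the second case of \eqref{removal} reads
$$l<2l-(l-2)-1=l+1,$$
which holds. Hence $\rem(1,\dots,l)=\rem(1,\dots,l-2)(l-1,l)=(1,\dots,l)$, and therefore $\redu(1,\dots,l)=\emptyset$.

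\emph{Point (2).} For $l$ odd the last entry $a_l=l$ is odd. By Proposition \ref{prop:removcontain}(3), $a_l\notin\rem({\bf a})$ and $\rem({\bf a})=\rem(1,\dots,l-1)$. The latter equals $(1,\dots,l-1)$ by point (1), so $\redu(1,\dots,l)=(l)$.

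\emph{Point (3).} The "if" direction is point (1). For "only if", suppose $\rem({\bf a})={\bf a}$. I prove by induction on odd $i\le l$ that $i<l$, $a_i=i$ and $a_{i+1}=i+1$, so that $(a_1,\dots,a_{i+1})=(1,\dots,i+1)$ and, by point (1), $|\rem(a_1,\dots,a_{i-1})|=i-1$. Since $a_i\in\rem({\bf a})$, one of the two alternatives of Proposition \ref{prop:rem}(2) holds.
\begin{itemize}
\item Alternative (b) is impossible. It would require $a_i$ even with $a_i=a_{i-1}+1$, but $a_{i-1}=i-1$ is even (or $i=1$, where (b) needs $i>1$), so $a_{i-1}+1$ is odd.
\item Hence (a) holds. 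This gives $a_i$ odd, $i<l$, $a_{i+1}=a_i+1$, and
$$a_i<2i-|\rem(a_1,\dots,a_{i-1})|=2i-(i-1)=i+1.$$
Since the entries are strictly increasing with $a_{i-1}=i-1$, we also have $a_i\ge i$.
\end{itemize}
So $a_i=i$ and $a_{i+1}=i+1$. Running the induction through all odd indices gives ${\bf a}=(1,\dots,l)$. Moreover every odd index satisfies $i<l$, so $l$ is even; this also follows directly from Proposition \ref{prop:rem}(3).

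\emph{Point (4).} Since $\redu({\bf a})={\bf a}\setminus\rem({\bf a})$, we have $\redu({\bf a})=\emptyset$ if and only if $\rem({\bf a})={\bf a}$, so point (4) follows from point (3).

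The only delicate step is the "only if" direction of (3). It hinges on the bookkeeping of $|\rem(a_1,\dots,a_{i-1})|$ in the inequality of Proposition \ref{prop:rem}(2)(a), and I handle this by carrying the statement of point (1) along inside the induction.
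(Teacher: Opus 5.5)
Your proof is correct. Point (1) is verified by induction from the recursion \eqref{removal}, (2) reduces to (1) via Proposition \ref{prop:removcontain}(3), the converse in (3) follows by induction over odd indices using the membership criterion of Proposition \ref{prop:rem}(2), and (4) is immediate since $\rem(\mathbf{a})\subseteq\mathbf{a}$. The paper states this corollary without proof, as a direct consequence of Propositions \ref{prop:removcontain} and \ref{prop:rem}, and your write-up supplies exactly that derivation.
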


\begin{ex}  

$\rem(6,7,8,9,10,11,12)=(11,12)$, $\redu(6,7,8,9,10,11,12)=(6,7,8,9,10)$

$\rem(1,2,3,4,5)=(1,2,3,4)$, $\redu(1,2,3,4,5)=(5)$.
\end{ex}

\begin{prop} \label{prop:redu}\cite{watanabe,nsw} Let $l\in [0,2n]$ and  $\mathbf{a}=(a_1,\dots,a_l) \in SST_{2n}(\varpi_l)$.
Then
\begin{enumerate}

\item[(1)] $\redu(\mathbf{a})$ is symplectic.

    \item[(2)] $\redu(\mathbf{a})=\mathbf{a}$ if and only if $\mathbf{a}$ is symplectic.

\item[(3)] \cite[Proposition 4.3.6]{watanabe} The reduction map $\redu$ on $SST_{2n}(\varpi_l)$ is the injective assignment

\begin{align}\redu:SST_{2n}(\varpi_l)&\rightarrow\bigsqcup_{\begin{smallmatrix} 0\le t\le min\{l,2n-l\}\\
l-t\in 2\mathbb{Z}
\end{smallmatrix}}SpT_{2n}(\varpi_t)\nonumber\\
\mathbf{a}&\mapsto \redu( \mathbf{a})=\mathbf{a}\setminus \rem(\mathbf{a}).
\label{redumap}
\end{align}
\end{enumerate}
\end{prop}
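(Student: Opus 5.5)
The plan is to prove the three points in order, always arguing through the recursive description of $\rem$ in \eqref{removal} and its local reformulation in Proposition \ref{prop:rem}~(2). Point (1) rests on the failure criterion of Proposition \ref{prop:symplectic1}~(3). Suppose $\redu(\mathbf a)=(b_1,\dots,b_t)$ is not symplectic. Then there is a first index $i$ with $b_{i-1}=2i-3$ and $b_i=2i-2$, so $b_{i-1},b_i$ form a pair $x<\s(x)$ of consecutive integers. Both survive in $\redu(\mathbf a)$, so they occupy consecutive positions $j-1,j$ in $\mathbf a$, with $a_{j-1}=2i-3$ and $a_j=2i-2$.

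Set $r=|\rem(a_1,\dots,a_{j-2})|$. By Proposition \ref{prop:removcontain}~(5), together with Proposition \ref{prop:rem}~(1),(3) (removed entries come in $\s$-pairs, and $a_{j-1},a_j$ are not removed), the removed entries lying before position $j-1$ are exactly $\rem(a_1,\dots,a_{j-2})$. Hence $j-1-r=i-1$, that is $j=i+r$. Now test criterion (b) of Proposition \ref{prop:rem}~(2) for $a_j$:
\[
a_j=2i-2<2(i+r)-r-1=2i+r-1 .
\]
This always holds, so $a_j\in\rem(\mathbf a)$, a contradiction. Hence $\redu(\mathbf a)$ is symplectic.

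For point (2), the implication "$\Leftarrow$" is immediate from (1). For "$\Rightarrow$", I will show $\rem(\mathbf a)=\emptyset$ by induction along the prefixes of $\mathbf a$. If $\rem(a_1,\dots,a_{j-2})=\emptyset$, then criterion (b) for $a_j$ requires $a_j<2j-1$, which is excluded because $\mathbf a$ is symplectic. By Proposition \ref{prop:rem}~(3), no odd entry is removed either.

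For point (3), the parity statement $l-t\in2\Z$ is Proposition \ref{prop:rem}~(3). The bound $t\le l$ is trivial, and $t\le n$ follows from (1) and Proposition \ref{prop:symplectic1}~(2).

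The remaining parts are the bound $t\le 2n-l$ and injectivity, and I expect these to be the main obstacle. For the bound I would count: each surviving entry $b_q$ satisfies $b_q\ge 2q-1$, and each removed pair sits in the gaps below it. From this I would aim to show that the $l$ entries of $\mathbf a$ force the largest surviving entry to be at least $l+t$, which gives $l+t\le 2n$ when $t>0$ (the case $t=0$ is trivial).

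For injectivity I would argue by induction on $l$, looking at the last entry $a_l$. If $a_l$ is odd, or even but not removed, then it is the largest element of $\redu(\mathbf a)$ and $\rem(\mathbf a)=\rem(a_1,\dots,a_{l-1})$. If instead $(a_{l-1},a_l)$ is removed, then $\redu(\mathbf a)=\redu(a_1,\dots,a_{l-2})$. The difficulty is that the first and last of these cases may both be compatible with a given image, so I would need to separate them using the inequality in \eqref{removal} combined with the symplectic bound on the image. If this comparison becomes unmanageable, I would fall back on Watanabe's factorization of $\redu$ into combinatorial $R$-matrices and reductions of lower degree \cite[Corollary 4.4.3]{watanabe}, each of which is injective.
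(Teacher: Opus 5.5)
Your arguments for (1) and (2) are correct (in (2) you swapped the labels of the two implications, which is harmless). So are the parity, $t\le l$ and $t\le n$ parts of (3). The paper proves nothing here and only cites \cite{watanabe,nsw}, so the bound $t\le 2n-l$ and injectivity have to be carried by your own route, and there the plan fails.

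\textbf{The bound $t\le 2n-l$.} The claim you aim at is false. Removed pairs need not lie below the largest surviving entry, and $b_t$ can be smaller than $l+t$. In the paper's own example $\mathbf a=(6,7,8,9,10,11,12)$, the removed pair $\rem(\mathbf a)=(11,12)$ lies above $\redu(\mathbf a)=(6,\dots,10)$, and $b_t=10<12=l+t$. Likewise $\redu(1,2,3,4,5)=(5)$ with $5<6$.

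\textbf{Injectivity.} There are two gaps.
\begin{itemize}
\item The mixed case is left open, and the bound $b_q\ge 2q-1$ on the image cannot close it. If $a_l=b_t$ survives in $\mathbf a$ while $\mathbf a'$ ends in a removed pair, you only get $a'_l\ge b_t+2\ge\max(l,2t-1)+2$. For $t\ge 2$ this is compatible with the removal inequality $a'_l\le l+t$.
\item You skip the case where both columns end in a removed pair. Equal prefixes (by induction) do not by themselves force equal final pairs, because the removal inequality is only an upper bound on $a_l$.
\end{itemize}
Falling back on Watanabe's factorization just repeats the citation; it is not a proof.

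\textbf{The missing ingredient.} You need a version of your computation in (1) that keeps track of removed entries. Let $b_q$ be the $q$-th surviving entry and $r_q$ the number of removed entries smaller than $b_q$. Then
\[
b_q\ge 2q-1+r_q .
\]
This goes by induction on $q$; the case $q=1$ holds because $b_1$ sits at position $1+r_1$.
\begin{itemize}
\item If $b_q$ is even and $b_q-1\in\mathbf a$, then $b_q-1=b_{q-1}$, since a removed odd entry takes its $\s$-partner with it. The failure of criterion (b) at position $q+r_q$ is exactly the inequality.
\item If $b_q$ is even and $b_q-1\notin\mathbf a$, then $b_q\ge b_{q-1}+(r_q-r_{q-1})+2$.
\item If $b_q$ is odd, then $b_q\ge b_{q-1}+(r_q-r_{q-1})+1$, and you conclude because $r_q$ is even.
\end{itemize}

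\textbf{How it closes both gaps.} The $l-t-r_t$ entries above $b_t$ are all removed, so $2n\ge a_l\ge b_t+(l-t-r_t)\ge l+t-1$. Since $l+t\in 2\Z$, this gives $l+t\le 2n$.

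For injectivity, consider the last entry.
\begin{itemize}
\item If $a_l$ survives, then $a_l=b_t\ge l+t-1$.
\item If $(a_{l-1},a_l)$ is removed, the removal inequality gives $a_l\le l+t$. Applying the bound to $(a_1,\dots,a_{l-2})$ gives $a_{l-2}\ge l+t-3$. So the final pair is forced to be $(l+t-1,l+t)$, which settles the both-removed case.
\item In the mixed case you get $a'_l\ge b_t+2\ge l+t+1$, contradicting $a'_l\le l+t$.
\end{itemize}
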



We now show that the reduction map \eqref{redumap} is also surjective. For a fixed $n\in\mathbb{N}$, let $l\in [0,2n]$ and $t\in[0,n]$ such that  $0\le t\le min\{l,2n-l\}$ and $l-t\in 2\Z$. Then we define the expanding map

\begin{align}\label{exp}
\rm{exp}_t:SpT_{2n}(\varpi_t)&\rightarrow SST_{2n}(\varpi_l)\quad\\
\mathbf{a}&\mapsto\rm{exp}(\mathbf{a}):=T,\nonumber
\end{align}
such that $\redu(\rm{exp}(\bf a))=a$. That is, $\rm{red}_{t}^{-1}:=\redu^{-1}_{|SpT_{2n}(\varpi_t)}=\rm{exp}_t$. The remaining sections are devoted to provide procedures to explicitly define $\redu^{-1}(\mathbf{a})$ for a symplectic column $\mathbf{a}\in SpT_{2n}(\varpi_t)$.

\subsection{The procedures to expand symplectic columns}
  The procedure to expand  symplectic columns in the conditions of Lemma  \ref{lem:swapping} is given now. These are the symplectic columns in the conditions of Theorem \ref{thm:symplecticdecomposition} when $k=0$, that is,  the symplectic columns without nonempty  intervals fixed by the parity involution $\s$. (Columns with this property are guaranteed to be symplectic by Corollary \ref{cor:withoutA}.) In other words these are the symplectic columns $\bf a$ such that $${\bf a}\cap \s({\bf a})=\emptyset.$$

The next theorem was stated in \cite{azreco} but a complete and constructive proof is given here.

\begin{thm}\label{thm:noconsecutive} Let $l\in [0,2n]$ and $t\in[0,n]$ such that  $0\le t\le min\{l,2n-l\}$ and $l-t\in 2\Z$. Let $\mathbf{a} = (a_1, \dots , a_t)\in SpT_{2n}(\varpi_t)$ such that, for every $1\le i\le t$, $\s(a_i)\notin \bf{a}$. Then,

\begin{align}\label{redu:nofactors}
\rm{red}_{t}^{-1}:=\redu^{-1}_{|SpT_{2n}(\varpi_t)}:SpT_{2n}(\varpi_t)&\rightarrow SST_{2n}(\varpi_l)\quad\\
\mathbf{a}&\mapsto\rm{red}_{t}^{-1}(\mathbf{a}):=T_0(a_1)T_1\cdots (a_t) T_t,\nonumber
\end{align}
where \begin{align}\label{inverseredu:nofactors}
&T_0=(1,2,\dots l_1),\\
&({a}_i)T_i=
\begin{cases}
({a}_i) ( a_i+1,\dots, a_i+l_{i+1}),
  &
 \mbox{if } a_i\in 2\mathbb{Z}\\
({a}_i) ( a_i+1+1,a_i+1+2,\dots, a_i+1+l_{i+1}),
 &\mbox{if } a_i\notin 2\mathbb{Z}\\
\end{cases},& 1\le i\le t, \label{inverseredu:nofactors2}
\end{align}
and
$l_1,\dots, l_t$, $l_{t+1}$ are nonnegative even numbers defined by
\begin{align}\label{l:nofactors}
&l_1=
\begin{cases} min\{a_1-2, l-t\},& \mbox{if } a_1\in 2\mathbb{Z}\\
min\{ a_1-1, l-t\},&\mbox{if } a_1\notin 2\mathbb{Z},\\
\end{cases}
\end{align}
\begin{align}
&l_{i+1}=
\begin{cases}min\{a_{i+1}-a_i-2, l-t-\sum_{k=1}^i l_k\}, &\mbox{if }
a_i, a_{i+1}\in 2\mathbb{Z} \mbox{ or } a_i, a_{i+1}\notin 2\mathbb{Z}
\\
min\{{ a_{i+1}-a_i-1},~~ l-t-\sum_{k=1}^i l_k\},&\mbox{if }
a_i\in 2\mathbb{Z}, a_{i+1}\notin 2\mathbb{Z} \\
min\{a_{i+1}-a_i-3,  l-t-\sum_{k=1}^i l_k\},&\mbox{if }
 a_i\notin 2\mathbb{Z}, ~~~ a_{i+1}\in 2\mathbb{Z}
\end{cases},\quad \mbox{ for } 1\le i<t,\label{ells}
\end{align}
and
$$l_{t+1}=l-t-\sum_{i=1}^{t}l_i.$$
\end{thm}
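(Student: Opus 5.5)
The plan is to verify directly that the column $T:=T_0(a_1)T_1\cdots(a_t)T_t$ lies in $SST_{2n}(\varpi_l)$ and that $\rem(T)=T_0\sqcup T_1\sqcup\cdots\sqcup T_t$. Then $\redu(T)=T\setminus\rem(T)={\bf a}$, and by injectivity of $\redu$ (Proposition \ref{prop:redu}(3)) $T$ is the unique preimage, so $\rm{red}_t^{-1}({\bf a})=T$. I will think of $T$ as built by a greedy filling. Call the set $F:=[2n]\setminus({\bf a}\cup\s({\bf a}))$ the free values. By Lemma \ref{lem:swapping} the sets ${\bf a}$ and $\s({\bf a})$ are disjoint, so $|F|=2n-2t\ge l-t$, because $l\le 2n-t$.

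\emph{Step 1 (well-definedness).} I would check case by case, using Lemma \ref{lem:swapping}(6) and the min/max parity rules \eqref{minmax}, that each gap bound in \eqref{l:nofactors}--\eqref{ells} is even and nonnegative, and equals the number of free values strictly between $a_i$ and $a_{i+1}$ (respectively below $a_1$). For example, if $a_i$ is odd and $a_{i+1}$ is even, the gap loses the two values $a_i+1=\s(a_i)$ and $a_{i+1}-1=\s(a_{i+1})$; this gives $a_{i+1}-a_i-3$. Moreover, these free values form a single interval: it starts just after $\max(a_i,\s(a_i))$, which is even, so the interval starts at an odd number. It ends just before $\min(a_{i+1},\s(a_{i+1}))$, which is odd, so it ends at an even number.

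Since $l-t$ is even, every $l_i$ is even. Hence each block $T_i$ is a union of consecutive pairs $(x,x+1)=[x,\s(x)]$ with $x$ odd. The blocks fill the free values from the bottom up, so the total $\sum_{i=1}^{t+1} l_i$ equals $l-t$. The last block fits below $2n+1$ because $|F|\ge l-t$. Strict increase of $T$ is then clear, so $T\in SST_{2n}(\varpi_l)$.

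\emph{Step 2 ($a_i\notin\rem(T)$).} If $a_i$ is even, the preceding entry of $T$ is at most $a_i-2$, since $a_i-1=\s(a_i)$ was excluded. If $a_i$ is odd, the next entry is at least $a_i+2$. So no $a_i$ is part of a consecutive (odd, even) pair in $T$, and Proposition \ref{prop:rem}(2) gives $a_i\notin\rem(T)$.

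\emph{Step 3 (every block pair is removable).} This is the main point. I would argue by induction along the column, scanning left to right with Proposition \ref{prop:rem}(2). The induction hypothesis is that the removal set of each prefix of $T$ consists exactly of the block entries in that prefix; Proposition \ref{prop:rem}(1),(5) let me pass between prefixes.

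Take a pair $(x-1,x)$ in some block, with $x$ even at position $j$ of $T$. Let $m$ be the number of $a$-entries below $x$ and $r$ the number of block entries before the pair. Then $j=m+r+2$ and, by induction, $|\rem(\text{first } j-2 \text{ entries})|=r$. Condition (b) of Proposition \ref{prop:rem}(2) therefore reads
$$x<2j-r-1=2m+r+3.$$
The greedy filling gives this bound. Every value $\le x$ is one of: an $a$-entry (at most $m$ of these), the $\s$-image of one of those $a$-entries (at most $m$), a block entry before the pair ($r$), or $x-1$ or $x$ itself. Hence $x\le 2m+r+2$, which is the required strict inequality. The companion odd entry $x-1$ is then covered by Proposition \ref{prop:rem}(2)(a) or (3).

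I expect Step 3 to be the main obstacle. The delicate parts are keeping the bookkeeping of $|\rem|$ of prefixes consistent with the recursive definition \eqref{removal}, and handling the boundary blocks $T_0$ and $T_t$. For $T_0=(1,\dots,l_1)$ I would simply invoke Corollary \ref{empty0}(1).
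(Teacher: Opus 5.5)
Your proof is correct, and it checks removability by a different route from the paper's.

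Both arguments start the same way. The added entries $T\setminus{\bf a}$ are the first $l-t$ elements of $W=[2n]\setminus({\bf a}\sqcup\s({\bf a}))$. They split into intervals of even length beginning with an odd number, and no $a_i$ is adjacent to $\s(a_i)$ in $T$, so no $a_i$ is removed.

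The paper then proves $\redu(T)={\bf a}$ by induction on $t$. It deletes $a_t$ and passes to ${\bf a}'={\bf a}\setminus\{a_t\}$ with length $l-1$. In that column the domino $\{a_t,\s(a_t)\}$ joins the added entries and the last two entries of the final block are dropped. It applies the induction hypothesis to this $T'$ and then checks the removal inequalities \eqref{cat} and \eqref{snail} for the tail after $a_t$ separately.

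You instead scan $T$ once from left to right with the recursive definition \eqref{removal}, and the verification reduces to a single count. Because the block entries are the smallest free values, every value in $[1,x]$ is one of three things: an $a$-entry below $x$, the $\s$-partner of such an entry, or a block entry. Hence $x=2m+r+2$ exactly, which is one less than the threshold $2m+r+3$.

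What each route buys:
\begin{itemize}
\item Yours is shorter and treats $T_0$, the inner blocks and the tail $T_t$ uniformly. It also shows the removal condition is met with no slack.
\item The paper's induction records how the expansion of $({\bf a},l)$ relates to that of $({\bf a}\setminus\{a_t\},l-1)$.
\end{itemize}

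Two small fixes are needed.
\begin{itemize}
\item As literally stated, your induction hypothesis fails for prefixes ending at the odd entry $x-1$ of a pair. That entry is not in the prefix's removal set, by Proposition \ref{prop:removcontain}(3). The hypothesis should say that $\rem$ of a prefix consists of the entries of the complete block pairs it contains. This is all you actually use, since $P_{j-2}$ ends at a pair boundary.
\item The item (5) you cite belongs to Proposition \ref{prop:removcontain}, not Proposition \ref{prop:rem}. In any case, the recursive definition \eqref{removal} alone already drives the induction on prefix length.
\end{itemize}
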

\begin{proof}
 Recall Lemma \ref{lem:swapping}. We start by checking that $l_i$ is a non negative integer for $i=1,\dots,t,t+1$.
By definition $l-t\ge 0$ and by \eqref{l:nofactors}, $l-t\ge l_1\ge 0$. Hence $l-t-l_1\ge 0$ and by \eqref{ells} $l-t-l_1\ge l_2\ge 0$. Hence $l-t-l_1-l_2\ge 0$ and $l-t-l_1-l_2\ge l_3\ge 0$ by \eqref{ells}. Continuing in this manner, by  \eqref{l:nofactors} and \eqref{ells}
  \begin{align*}  l-(i+\sum_{k=1}^i l_k)-(t-i)=l-\sum_{k=1}^i l_k-i-t+i=l-t-\sum_{k=1}^i l_k\ge l_{i+1}\ge 0, \mbox{ for $0\le i<t$.}
  \end{align*}
  Therefore, $$l_{t+1}=l-t-\sum_{i=1}^{t}l_i\ge 0$$ is well defined and
  indeed $T_0({a}_1)T_1\cdots({a}_t) T_t$ 
  has length
\begin{align*}\ell(T_0({a}_1)T_1\cdots ({a}_t)T_{t})=\sum_{i=0}^{t}\ell(T_i)+t=\sum_{i=1}^{t}l_i+l_{t+1}+t
=l.
\end{align*}

\bigskip

Note ${\bf a}\sqcup \s({\bf a})\subseteq [a_t]$, if $a_t\in 2\Z$, and ${\bf a}\sqcup s({\bf a})\subset [a_t+1]$, otherwise, and $|{\bf a}\sqcup s({\bf a})|=2t$.  Therefore, $a_t\ge 2t-1$ if $a_t\notin 2\Z$ and  $a_t\ge 2t$  if $a_t\in 2\Z$,  consistent with  that $\bf a$ is symplectic.

We need by Proposition \ref{prop:rem} to add  $\frac{l-t}{2}$ pairs ($l-t\in 2\Z$) of consecutive entries $\{x,\s(x)\}$ in $[1,2n]$ to the column $a=(a_1,\dots, a_t)\in SpT_{2n}(\varpi_t)$ to get a column $T$ in $SST_{2n}(\varpi_l)$ such that $\{x,\s(x)\}\cap \big({\bf a}\cup s({\bf a})\big)=\emptyset$;  and, in addition, these $\frac{l-t}{2}$ pairs define $\rem(T)$, that is, $\redu(T)=T\setminus \rem(T)=a$. We add these pairs of entries to $\bf a$ as earliest as possible as follows.

Let $W:=[2n]\setminus \left({\bf a}\sqcup \s({\bf a})\right)$ where $|W|=2n-2t$. Since $ 2n-l\ge t\Leftrightarrow |W|=2n-2t\ge l-t$, it follows $|W|\ge l-t$.
Let $W^l_t$ be the subset of $W$ consisting of the first $l-t$ entries of $W$. Consider, for $i=1,\dots,t+1$,

 \begin{align}W_{i-1}:=W^l_t\cap[a_{i-1},a_i],  \mbox{   with $a_0:=0$ and $a_{t+1}:=+\infty$}.
 \end{align}
  That is, $W^l_t=W_0\sqcup W_1\sqcup\cdots\sqcup W_{t-1}\sqcup W_t$,
and, recalling Lemma \ref{lem:swapping}, for $i=1,\dots,t+1$, one gets \eqref{inverseredu:nofactors}, \eqref{inverseredu:nofactors2}, \eqref{l:nofactors}, \eqref{ells},
\begin{align}W_{i-1}=(a_{i-1})T_{i-1}\setminus (a_{i-1}),\quad |W_{i-1}|=l_{i}\in 2\Z,\end{align}  where in particular

\begin{align}W_0=\{1,\dots,l_1\}=T_0, \mbox{  and } W_{t}=W^l_t\cap[a_{t},+\infty]=(a_t)T_t\setminus (a_t).
\end{align}

It incurs that each $W_i$, if not empty, is a sequence of consecutive integers starting with an odd number and terminating with an even number. (If for some $i$, $W_{i-1}$  starts with an even number $x$ then $\s(x)=a_{i-1}$ which contradicts the definition $W:=[2n]\setminus \left({\bf a}\sqcup \s({\bf a})\right)$. Similarly if the terminating entry of $W_{i-1}$ is an odd number $x$ then $\s(x)=a_i$.

Let $T:=T_0(a_1)T_1\cdots(a_t) T_t$.
Observe  that
when  it holds, for some $i\in[0,t]$,
$l_{i+1}=l-t-\sum_{k=1}^il_k\Leftrightarrow l=t+\sum_{k=1}^{i+1}l_k$,   in \eqref{l:nofactors} or  \eqref{ells}, it follows that  $W_{j}=()$ for all $j\ge i+1$, and, in particular, $W_t=\emptyset$.
Hence,
 $T$ is such that
\begin{align}\label{construction}(W^l_t\cap[a_{0},a_1])\bigsqcup\bigg(\bigsqcup_{i=2}^{t-1}(W^l_t\cap[a_{i-1},a_i])\bigg)\bigsqcup \big(W^l_t\cap[a_{t-1},a_t]\big)\bigsqcup \big(W^l_t\cap[a_t,+\infty]\big)=T\setminus {\bf a}.
\end{align}

We now analyze the construction of $T\in SST_{2n}(\varpi_l)$ from ${\bf a}\in SpT_{2n}(\varpi_{t})$ in  \eqref{construction} (or \eqref{redu:nofactors})  when we pass to ${\bf a'}=(a'_1,\dots,a'_{t-1})={\bf a}\setminus \{a_t\}\in SpT_{2n}(\varpi_{t-1})$ to be expanded in $SST_{2n}(\varpi_{l-1})$.
  Note that for $t\ge 1$, under the hypothesis conditions on $t$,  one also has $ 0\le t-1\le min\{l-1, 2n-(l-1)\}$, and $l-t=(l-1)-(t-1)\in 2\Z$.

For ${\bf a'}=(a'_1,\dots,a'_{t-1})={\bf a}\setminus \{a_t\}$,
it holds $\s({\bf a'})=\s({\bf a})\setminus (\s(a_t))$ and
\begin{align}{\bf a}\sqcup \s({\bf a})&={\bf a'}\sqcup \s({\bf a'})\sqcup \{a_t,\s(a_t)\}\\
&=\{a_1,\s(a_1)\}\sqcup\{a_2,\s(a_2)\}\sqcup\cdots\sqcup\{a_{t-1},\s(a_{t-1})\}\sqcup \{a_t,\s(a_t)\}
\end{align}
where, by definition of the involution $\s$,  each set $\{a_i,\s(a_i)\}$ consists of pairs of consecutive integers.
Let
\begin{align*}&W':=[2n]\setminus\left({{\bf {a'}}}\cup \s({\bf {a'}})\right)=W\sqcup \{a_t,\s(a_t)\}.
\end{align*}
 The set $W$ consists of  all integers of $[2n]$ except those in $\bf a\sqcup \s(a)$ and $|W|=2(n-t)$ is even. This implies that
 $$W=Z_0\sqcup Z\sqcup Z_1$$
 with

 $$Z_0=[1,min(a_1,\s(a_1))-1] ~\mbox{ and }  Z_1=[max(a_t,\s(a_t))+1,2n],$$
  and

   $$Z= [max(a_1,\s(a_1))+1, min(a_t,\s(a_t))-1]\setminus\bigsqcup_{i=2}^{t}\{a_i,\s(a_i)\}.$$
 of even cardinalities by \eqref{minmax}.
 Also

 $$  W'=Z_0\sqcup Z'\sqcup Z'_1,$$  where $ max(a_{t-1},\s(a_{t-1}))+1 \le min(a_t,\s(a_t))\le 2n$,
  and

  \begin{align}Z'_1&=[max(a_{t-1},\s(a_{t-1}))+1,2n]=\nonumber\\
  &=[max(a_{t-1},\s(a_{t-1}))+1,max(a_t,\s(a_t))]\sqcup [max(a_t,\s(a_t))+1 ,2n]\nonumber\\
  &=[max(a_{t-1},\s(a_{t-1}))+1,max(a_t,\s(a_t))]\sqcup Z_1,
  \end{align}

  $$Z'= [max(a_1,\s(a_1))+1, min(a_{t-1},\s(a_{t-1}))-1]\setminus\bigsqcup_{i=2}^{t-1}\{a_i,\s(a_i)\}.$$
 of even cardinality by \eqref{minmax}.
 Hence $max(a_t,\s(a_t))+1\in W,W'.$

  Since $l-t\in 2\Z$ and the cardinalities of $Z_0$, $Z$ and $Z_1$ are even then $2\le |W^l_t\cap Z_1|\in 2\Z$ if $W_t\neq \emptyset$. Note $\{a_t,\s(a_t)\}=[min(a_t,\s(a_t)), max(a_t,\s(a_t)) ]$,
 $[a_t,\s(a_t),max(a_t \s(a_t))+1,max(a_t \s(a_t))+2]$ is an interval  and
 $$[max(a_t \s(a_t))+1,max(a_t \s(a_t))+2]= W^l_t\cap[a_t,\s(a_t),max(a_t \s(a_t))+1,max(a_t \s(a_t))+2].$$
Thus $$W^l_t=W^l_t|_{<min(a_t, \s(a_t))}\sqcup W_t,$$
where $W_t=W_t^l|_{>max(a_t,\s(a_t))}=W|_{>max(a_t,\s(a_t))}\cap Z_1$ is an interval and  $|W_t^l|_{>max(a_t,\s(a_t))}|\ge 2$.

Define ${W'}^{l-1}_{t-1}$  to be the set  of the first $l-t=(l-1)-(t-1) $ elements of $W'$ obtained from $W_t^l$ by adjoining $\{a_t,\s(a_t)\}$ and omitting  the two last  consecutive integers if $W_t\neq\emptyset$.

 Therefore,
 \begin{align}&{W'}^{l-1}_{t-1}=\nonumber\\
&=W^l_t|_{<min(a_t, \s(a_t)}\sqcup\{a_t,\s(a_t)\}\sqcup (W_t\setminus \{\mbox{ the two last consecutive integers of $W_t^l$}\})\nonumber\\
&=W_0\sqcup\cdots\sqcup W_{t-2}\sqcup (W_{t-1}\sqcup\{a_t,\s(a_t)\}\sqcup W_t^-).
\end{align}
where $W_t^-$ is $W_t$ minus its two last elements, equivalently,
$$W_t^-=W_t\setminus \{\mbox{ the two last consecutive integers of $W_t^l$}\}.$$
Define
\begin{align}W'_{t-1}:=W_{t-1}\sqcup\{a_t,\s(a_t)\}\sqcup W_t^-.\label{wminus}
\end{align}
Henceforth
\begin{align}&{W'}^{l-1}_{t-1}=W_0\sqcup\cdots \sqcup W_{t-2}\sqcup W'_{t-1},\\
& |W_{i-1}|=l_i,\quad 1\le i\le t-1,\quad  |W'_{t-1}|=l_t+2+l_{t+1}-2=l_t+l_{t+1}:=l'_t
\end{align}
and ${W'}^{l-1}_{t-1}\cap [a_{i-1},a_i]=W_{i-1}=( a_i)T_{i-1}\setminus\{a_{i-1}\}$, for $i=1,\dots,t-1$, with $a_0=0$,
while $${W'}^{l-1}_{t-1}\cap [a_{t-1},+\infty)= W'_{t-1}$$ and
$$( a_{t-1})T'_{t-1}=(a_{t-1})W'_{t-1}, \mbox{ with } (a_{t-1})T'_{t-1}=(a_{t-1})W_{t-1}\sqcup\{a_t,\s(a_t)\} \mbox{ if } W_t^-=\emptyset.$$


Hence, one defines
 $T':=T_0( a_1)T_1\cdots (a_{t-2})T_{t-2}( a_{t-1})T'_{t-1}$  with 
 $$\ell (T')=l_1+\cdots+l_{t-1}+l_t+l_{t+1}+t-1=l-1$$ such that
\begin{align}&T'\setminus {\bf a'}=\bigsqcup_{i=1}^{t-1}\big(W'^{l-1}_{t-1}\cap[a_{i-1},a_i]\big)\bigsqcup \big(W'^{l-1}_{t-1}\cap[a_{t-1},+\infty)\big)\nonumber\\
&=\bigsqcup_{i=1}^{t-1}\big(W^{l}_{t}\cap[a_{i-1},a_i])\bigsqcup \big(W^{l}_{t}\cap[a_{t-1},a_t])\bigsqcup\{a_t,\s(a_t)\}\sqcup W_t^-\big).
\end{align}

\bigskip


We now show by induction on $t\ge 0$ that $\redu(T)=\bf a$.

 If $t=0$, $\bf a=()$ and $s(\bf a)=()$, $W^l_0\cap [0,+\infty)=W^l_0=T=(1,\dots,l)$ and $l_1=l\in 2\Z$. Clearly $\redu(1,\dots,l)=()$.

Let $t\ge 1$ and
by induction assume the assertion to be true for $t-1$. One has then by induction that
\begin{align}{\bf a'}=(a_1,\dots,a_{t-1})&=\redu(T_0( a_1)T_1\cdots( a_{t-2}) T_{t-2}T'_{t-1})\nonumber\\
&=\redu(T_0( a_1)T_1\cdots ( a_{t-2})T_{t-2}(a_{t-1})W'_{t-1})\\
&= \redu(T_0( a_1)T_1\cdots ( a_{t-2})T_{t-2}(a_{t-1})W_{t-1}\sqcup\{a_t,\s(a_t)\}\sqcup W_t^-).
\end{align}

where \begin{align}\label{formulainduction}&T_0( a_{1})T_1\cdots( a_{t-2}) T_{t-2}T'_{t-1}=\nonumber\\
&=\begin{cases}
T_0( a_{1})T_1\cdots( a_{t-2}) T_{t-2}(a_{t-1})W_{t-1}({a}_{t}).(\s(a_t)). ( \s(a_t)+1,\s(a_t)+2,\dots, \s(a_t)+l_{t+1}-2),&
 \mbox{ if } a_t\notin 2\mathbb{Z},\\
 T_0( a_{1})T_1\cdots ( a_{t-2})T_{t-2}(a_{t-1})W_{t-1}.(\s(a_{t}))({a}_{t})(a_t+1,a_t+2,\dots, a_{t}+l_{t+1}-2)),&
 \mbox{ if } a_t\in 2\mathbb{Z}\\
 \end{cases}
 \end{align}

It follows that in particular
\begin{align}\label{reduinduction}\redu(T_0( a_{1})T_1\cdots( a_{t-2}) T_{t-2}(a_{t-1})W_{t-1})=(a_1,\dots,a_{t-1})\\
=\redu(T_0( a_{1})T_1\cdots ( a_{t-2})T_{t-2}(a_{t-1})W_{t-1}\sqcup\{a_t,\s(a_t)\}).
\end{align}

Now,
\begin{align}\label{reduall}&T_0( a_{})T_1\cdots ( a_{t-1})T_{t-1}( a_{t})T_{t}=\nonumber\\
&= \begin{cases}
T_0( a_{1})T_1\cdots( a_{t-2}) T_{t-2}(a_{t-1})W_{t-1}({a}_{t}) ( \s(a_t)+1,s(a_t)+2,\dots, \s(a_t)+l_{t+1}),&
 \mbox{ if } a_t\notin 2\mathbb{Z},\\
 T_0( a_{1})T_1\cdots ( a_{t-2})T_{t-2}(a_{t-1})W_{t-1}({a}_{t})(a_t+1,a_t+2,\dots, a_{t}+l_{t+1}),&
 \mbox{ if } a_t\in 2\mathbb{Z}\\
 \end{cases}
 \end{align}
where $l_{t+1}=(l-t)-\sum_{i=1}^tl_i.$
 Therefore  from \eqref{reduinduction}, and because $\s(a_t)\notin W_{t-1}$ (by definition of $W$) and consequently $a_t$ is not paired with $\s(a_t)$ in \eqref{reduall},
$$\redu( T_0( a_{1})T_1\cdots ( a_{t-2})T_{t-2}(a_{t-1})W_{t-1}({a}_{t}))=\bf a$$

It remains to analyse in \eqref{reduall}. $( \s(a_t)+1,\s(a_t)+2,\dots, \s(a_t)+l_{t+1})$ for $a_t\notin 2\mathbb{Z}$, and $(a_t+1,a_t+2,\dots, a_{t}+l_{t+1})$ for $a_t\in 2\mathbb{Z}$. For for $a_t\notin 2\mathbb{Z}$, and  $1\le j\le l_{t+1}$ odd, we have to show

$$\s(a_t)+j<2(l-t+t+j)-|\rem(T_0( a_{1})T_1\cdots ( a_{t-2})T_{t-2}(a_{t-1})W_{t-1}({a}_{t}) ( \s(a_t)+1,\dots,\s(a_t)+j-1)|$$
$$\Leftrightarrow$$
$$\s(a_t)+j<2(l-t+t+j)-(l-t+j-1)$$
$$\Leftrightarrow$$
\begin{align}\s(a_t)+j<2(l+j)-(l-t+j-1)\label{cat}\end{align}

From \eqref{formulainduction}, if $a_t\notin2\Z$, for all $1\le j\le l_{t+1}-2$, and $j$ odd,
\begin{align}&\rem(T_0( a_{1})T_1\cdots ( a_{t-2})T_{t-2}(a_{t-1})W_{t-1}({a}_{t}).(\s(a_t)). ( \s(a_t)+1,\s(a_t)+2,\dots, \s(a_t)+j,\s(a_t)+j+1))\\
&=W_1\cdots W_{t-1}({a}_{t}).(\s(a_t)).\s(a_t)+1,\s(a_t)+2,\dots, \s(a_t)+j,\s(a_t)+j+1)
\end{align}
which means, by $\sum_{i=1}^t l_i=l-t$,

\begin{align}&\s(a_t)+j<2(l-t+t+1+j)-\nonumber\\
&-|\rem(T_0( a_{1})T_1\cdots ( a_{t-2})T_{t-2}(a_{t-1})W_{t-1}({a}_{t}).(\s(a_t)). ( \s(a_t)+1,\s(a_t)+2,\dots, \s(a_t)+j-1)|\nonumber\\
&=2(l-t+t+j)+2-(l-t+j-1)-2\nonumber\\
&=2(l+j)-(l-t+j-1) \mbox{ which confirms \eqref{cat}}
\end{align}
 In particular, for $j=l_{t+1}-3$, one has
 \begin{align*}\s(a_t)+l_{t+1}-3&<2(l-t+t+1+l_{t+1}-3)-(l-t+2+l_{t+1}-4)
=2(l+l_{t+1}-2)-(l-t+l_{t+1}-2)\\
&\Leftrightarrow\\
&\s(a_t)+l_{t+1}-1<2(l+l_{t+1}-2)-(l-t+l_{t+1}-2)+2\\
&\s(a_t)+l_{t+1}-1<2(l+l_{t+1}-2+1)-(l-t+l_{t+1}-2)
\end{align*}
which gives \eqref{cat} for $j=l_{t+1}-1$.

For for $a_t\in 2\mathbb{Z}$, and  $1\le j\le l_{t+1}$ odd, we have to show

$$a_t+j<2(l-t+t+j)-|\rem(T_0( a_{1})T_1\cdots( a_{t-2}) T_{t-2}(a_{t-1})W_{t-1}({a}_{t}) ( a_t+1,\dots,a_t+j-1)|$$
$$\Leftrightarrow$$
$$a_t+j<2(l-t+t+j)-(l-t+j-1)$$
$$\Leftrightarrow$$
\begin{align}a_t+j<2(l+j)-(l-t+j-1)\label{snail}\end{align}

From \eqref{formulainduction}, if $a_t\in2\Z$, for all $1\le j\le l_{t+1}-2$, and $j$ odd,
\begin{align}&\rem(T_0( a_{1})T_1\cdots( a_{t-2}) T_{t-2}(a_{t-1})W_{t-1}.(\s(a_{t}))({a}_{t})(a_t+1,a_t+2,\dots,a_t+j,a_t+j+1)\nonumber\\
&=W_1\cdots W_{t-1}.(s(a_t)).(a_t). (a_t+1,a_t+2,\dots, a_t+j,a_t+j+1)\nonumber
\end{align}
which means, by $\sum_{i=1}^t l_i=l-t$,

\begin{align*}&a_t+j<\nonumber\\
&<2(l-t+t+1+j)-|\rem(T_0( a_{1})T_1\cdots( a_{t-2}) T_{t-2}(a_{t-1})W_{t-1}.(s(a_t)).({a}_{t}). ( a_t+1,a_t+2,\dots, a_t+j-1)|\nonumber\\
&=2(l-t+t+j)+2-(l-t+j-1)-2\nonumber\\
&=2(l+j)-(l-t+j-1) \mbox{ which confirms \eqref{snail}}.
\end{align*}
\end{proof}

\begin{ex} We illustrate here the procedure in the proof of the previous theorem. Let $n=15$.
\begin{enumerate}
\item $l=18$ and ${\bf a}=(2,6,7,9, 14,16,22,23)\in SpT_{30}(\varpi_8)$ $t=8\le\min\{18,2n-l=30-18=12\}$, $l-t=10$.
It holds
$$T={\bf 2}(3,4){\bf 679}(11,12),{\bf 14,16},(17,18,19,20),{\bf 22,23},(25,26)\quad \mbox{ and }\redu(T)=\bf a.$$

One has,
\begin{align*}&W=[2n]\setminus\left({{\bf {a}}}\cup s({\bf {a}})\right)=\\
&[2n]\setminus\{2,6,7,9,14,16,22,23\}\cup\{ 1,5,8,10,13,15,21,24 \}\\
&=\{3,4,11,12,17,18,19,20,25,26,27,28,29,30\}.
\end{align*}

We consider the set $W^l_t$ of the first $l-t=18-8=10$ elements of $W$,
$$W^l_{t}=\{3,4,11,12,17,18,19,20,25,26\}.$$

Let $W^{l}_t\cap [a_{i-1},a_i]:=W_{i-1}=(a_{i-1})T_{i-1}\setminus\{a_{i-1}\}$, for $i=1,\dots,t+1$, with $a_0=0,~a_{t+1}=+\infty$.
It follows,
\begin{align*}&W^l_t\bigcap\big(\bigsqcup_{i=1}^{9} [a_{i-1},a_i])=
\{3,4,11,12,17,18,19,20,25,26\}\bigcap \big(\bigsqcup_{i=1}^7 [a_{i-1},a_i]\sqcup[a_7,a_8]\sqcup [a_8,+\infty]\big)\\
&=[3,4] \bigsqcup [11,12]\bigsqcup [17,18,19,20]\bigsqcup[25,26],\quad a_8=23.
\end{align*}

Hence,
$$T=2(3,4)679(11,12),14,16,(17,18,19,20),22, 23,(25,26) \quad \mbox{ and }\redu(T)=\bf a,$$

where $W^{l}_t\cap [a_{t},a_{t+1}]:=W_{t}=(a_t)T_{t}\setminus\{a_{t}\}=(25,26)$,

\item  For $l'=l-1=17$ and ${\bf a'}=(2,6,7,9, 14,16,22)={\bf a}\setminus \{a_t=23\}\in SpT_{30}(\varpi_7)$, $t'=t-1=7\le\min\{17,2n-17=30-17=13\}$, $l-t=10=(l-1)-(t-1)=l'-t'$.
Then put ${\bf a'}=(a'_1,\dots,a'_{t-1}):={\bf a}\setminus \{a_t\}$

$$s({\bf a'})=s({\bf a})\setminus s(a_t)=s({\bf a})\setminus \{\s(23)\}$$ and
\begin{align*}&W'=[2n]\setminus\left({{\bf {a'}}}\cup s({\bf {a'}})\right)=\\
&[2n]\setminus\left(\{2,6,7,9,14,16,22\}\sqcup\{ 1,5,8,10,13,15,21 \}\right)\\
&=\{3,4,11,12,17,18,19,20,23,24,25,26,27,28,29,30\}=W\sqcup \{a_t=23,\s(a_t)=s(23)=24\}
\end{align*}

We consider the set ${W'}^{l-1}_{t-1}$ of the first $l-t=(l-1)-(t-1)=18-8=10$ elements of $W'$,
$${W'}^{l-1}_{t-1}=\{3,4,11,12,17,18,19,20,23,24\}=(W^l_t\setminus\{25,26)\})\sqcup \{a_t,\s(a_t)\}.$$

Let ${W'}^{l-1}_{t-1}\cap [a'_{i-1},a'_i]:=W'_{i-1}=W_{i-1}=(a_{i-1})T_{i-1}\setminus\{a_{i-1}\}$, for $i=1,\dots,t'$, with $a_0=0,~a'_{t'+1}=+\infty$.

One has 
\begin{align*}&W^{l-1}_{t-1}\bigcap\bigsqcup_{i=1}^{8} [a'_{i-1},a'_i]=
\{3,4,11,12,17,18,19,20,{ a_t=23,\s(a_t)=24}\}\bigcap \big(\bigsqcup_{i=1}^7 [a_{i-1},a_i]\sqcup [a_7,+\infty)\big)\\
&=[3,4] \bigsqcup [11,12]\bigsqcup [17,18,19,20]\sqcup[23,24]\quad \mbox{ where } a_7=22
\end{align*}
and 
$T'=2(3,4)679(11,12),14,16,(17,18,19,20),22,(23,24),$ $\quad \redu(T')=\bf a'.$

\bigskip


\bigskip

\item For $l=18$ and $t'=6\le min\{18,2n-l=30-18=12\}$, $l-6=12$, $t'<t$, ${\bf a}'=(2,6,7,9, 14,16)\in SpT_{30}(\varpi_6)$.
One has, \begin{align*}&W=[2n]\setminus{{\bf {a}}}\cup s({\bf {a}})=\\
&[2n]\setminus(\{2,6,7,9,14,16\}\bigsqcup\{ 1,5,8,10,13,15 \})\\
&=\{3,4,11,12,17,18,19,20,21, 22,23,24,25,26,27,28,29,30\},
\end{align*}
$$W_{l-t'}=W_{12}=\{3,4,11,12,17,18,19,20,21,22,23,24, 25,26\}=W_{10}\cup\{25,26\}, ~a_7=+\infty$$
and
\begin{align*}&W_{l-t'}\bigcap\bigsqcup_{i=1}^7 [a_{i-1},a_i]=
\{3,4,11,12,17,18,19,20,21,22,23,24,25,26\}\bigcap \bigsqcup_{i=1}^7 [a_{i-1},a_i]\\
&=[3,4] \bigsqcup [11,12]\bigsqcup [17,18,19,20,{\bf 21,22,23,24,25,26}].
\end{align*}
\end{enumerate}
\end{ex}

 Previous theorem  characterizes the expanding of symplectic columns  where the intervals of length $>1$   as factors if they exist, consist solely of an even number followed with an odd number, the procedure in the proof provides the basic tool for the expanding of a generic symplectic column. Next lemma is supported by Corollary \ref{cor:cups}, equivalently, Theorem \ref{thm:symplecticdecomposition} with $k=1$ and  $p_0=p_1=0$.

\medskip
\begin{lem}\label{lem:reductionsurj} \cite{azreco} Let $l\in [0,2n]$ and $t\in[n]\cap 2\Z$ such that  $2\le t\le min\{l,2n-l\}$ and $l-t\in 2\Z$. Let $a_1\notin 2\Z$,  and   $\mathbf{A} = (a_1, a_1+1,\dots , a_1+t-2,a_1+t-1)\in SpT_{2n}(\varpi_t)$. Then
  $a_1\ge t+1$,  and
\begin{align*}
\rm{red}_{t}^{-1}:=\redu^{-1}_{|SpT_{2n}(\varpi_t)}:SpT_{2n}(\varpi_t)&\rightarrow SST_{2n}(\varpi_l)\quad\\
\mathbf{A}&\mapsto\rm{red}_{t}^{-1}(\mathbf{A})=T,\nonumber
\end{align*}
where \begin{align}T=(1,2,\dots, l_1)\mathbf{A}(a_1+t-1+1,\dots, a_1+t-1+l_{t+1})\label{lem:T},
\end{align}
and $l_1$, $l_{t+1}$  are nonnegative even numbers defined by
\begin{align}\label{lem0}
l_1=min\{a_1-t-1,l-t\},\quad
l_{t+1}=l-t-l_1.
\end{align}
\end{lem}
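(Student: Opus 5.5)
The plan is to check directly that the column $T$ in \eqref{lem:T} is well defined and satisfies $\redu(T)=\mathbf{A}$. I will compute $\rem(T)$ entry by entry using the criterion of Proposition \ref{prop:rem}(2). Since $\redu$ is injective (Proposition \ref{prop:redu}(3)), this identifies $T$ as $\rm{red}_t^{-1}(\mathbf{A})$.

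\emph{Step 1: the inequality $a_1\ge t+1$ and well-definedness of $T$.} The column $\mathbf{A}$ is a single maximal interval fixed by $\s$ ($k=1$), so Corollary \ref{cor:cups} gives $a_1\ge t+1$. This is also immediate from the symplectic condition at the last entry: $a_1+t-1\ge 2t-1$, combined with $a_1$ odd and $t$ even.
\begin{itemize}
\item Parity: $a_1-t-1$ is even and $l-t$ is even, so $l_1$ is a nonnegative even number, and hence so is $l_{t+1}=l-t-l_1\ge 0$.
\item Increasing entries: $l_1\le a_1-t-1<a_1$, so the entries of $T$ strictly increase.
\item Bound by $2n$: if $l_1=l-t$ then $l_{t+1}=0$ and every entry lies in $\mathbf{A}\subseteq[2n]$. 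Otherwise $l_1=a_1-t-1$, and the largest entry is $a_1+t-1+(l-t-a_1+t+1)=l+t\le 2n$, using $t\le 2n-l$.
\item Length: $\ell(T)=l_1+t+l_{t+1}=l$.
\end{itemize}
So $T\in SST_{2n}(\varpi_l)$.

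\emph{Step 2: compute $\rem(T)$ in three blocks, using Proposition \ref{prop:rem}(2)(a) for odd entries.} By Proposition \ref{prop:rem}(3), even entries are removed exactly together with their odd partner.
\begin{itemize}
\item The prefix $(1,\dots,l_1)$ is entirely removed, by Corollary \ref{empty0}(1) together with Proposition \ref{prop:removcontain}(5).
\item Entries of $\mathbf{A}$: the entry $a_1+j$ with $j$ even sits at position $i=l_1+j+1$. Inductively $|\rem|$ of the previous entries is $l_1$, because no earlier entry of $\mathbf{A}$ was removed. The removal condition $a_1+j<2(l_1+j+1)-(l_1+j)$ reads $a_1<l_1+2$. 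This fails, since $l_1+2\le a_1-t+1<a_1$ for $t\ge 2$. So no entry of $\mathbf{A}$ is removed.
\item Tail entries: $b_j=a_1+t-1+j$ sits at position $l_1+t+j$, and $b_j$ is odd exactly when $j$ is odd. Inductively, the previously removed set has size $l_1+j-1$. The condition becomes $a_1+t-1+j<2(l_1+t+j)-(l_1+j-1)$, i.e. $a_1<l_1+t+2$. When $l_{t+1}>0$ we are in the case $l_1=a_1-t-1$, where this is $a_1<a_1+1$, which holds. Hence each pair $(b_j,b_{j+1})$ with $j$ odd is removed, and the count grows by $2$ as the induction requires.
\end{itemize}
Therefore $\rem(T)=T\setminus\mathbf{A}$ and $\redu(T)=\mathbf{A}$.

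The main obstacle is the bookkeeping in Step 2. At each position the size of the previously removed set must be exactly right, and in particular the entries of $\mathbf{A}$ must not be removed even though they are consecutive odd/even pairs. I would handle this by a single induction on the position $i$, carrying the invariant
\[
|\rem(T_1,\dots,T_{i-1})|=\text{the number of non-}\mathbf{A}\text{ entries among the first } i-1 \text{ entries of } T.
\]
This is the same mechanism as in the proof of Theorem \ref{thm:noconsecutive}, where the tail condition \eqref{cat} is verified. If the case split on which minimum realizes $l_1$ proves awkward, I would instead phrase it as in that proof: take the first $l-t$ elements of $[2n]\setminus\mathbf{A}$ that lie outside $[a_1-t,a_1-1]$ and form intervals starting odd.
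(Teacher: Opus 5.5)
Your strategy is the same as the paper's: use Proposition~\ref{prop:rem}(2) to check that the prefix $(1,\dots,l_1)$ and the tail are removed while no entry of $\mathbf{A}$ is, then conclude by injectivity of $\redu$. Step~1 and the tail bullet are correct, and your check that the largest entry is $l+t\le 2n$ is a detail the paper omits. However, the middle bullet of Step~2 is miscomputed, and it is the crux of the lemma. You correctly state that the number of previously removed entries is $l_1$, but in the criterion you then subtract $l_1+j$, which is the number of \emph{all} preceding entries. With your own invariant, the odd entry $a_1+j$ at position $l_1+j+1$ is removed if and only if
\[
a_1+j<2(l_1+j+1)-l_1 \iff a_1<l_1+j+2 .
\]
This is not the condition $a_1<l_1+2$ that you wrote. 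The correct condition depends on $j$ and is most restrictive at the last odd entry, $j=t-2$, where it reads $a_1<l_1+t$. It fails there exactly because $l_1\le a_1-t-1$. For the next even value $l_1=a_1-t+1$ it would hold, so this is where the cap on $l_1$ is sharp. It is precisely the paper's inequality $a_1+t-2\ge 2(t-1+x)-x$ with $x=l_1$.

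Your test $a_1<l_1+2$ is too weak and certifies false statements: it would let any even $l_1\le a_1-3$ pass. For example, take $\mathbf{A}=(5,6,7,8)$ with prefix $(1,2)$. The entry $7$ sits at position $5$, and $\rem(1,2,5,6)=(1,2)$. Since $7<2\cdot 5-2$, the pair $(7,8)$ is removed, so $\redu(1,2,5,6,7,8)=(5,6)\neq\mathbf{A}$. Your formula wrongly declares $7$ not removed here. Replace that bullet with the corrected inequality above, proved by the induction on $j$ you already describe. With that change the argument is complete.
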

\begin{proof} A proof was given in \cite{azreco}. Alternatively, this result is immediate from the procedure given in the proof of Theorem \ref{thm:noconsecutive}. In fact since ${\bf A}=s({\bf A})$ and $\rem(T)=T\setminus {\bf A}$,
$$W=[2n]\setminus \left({\bf A}\sqcup s({\bf A})\right)=[2n]\setminus {\bf A}=\{1,\dots,a_1-1\}\sqcup \{a_1+t-1+1,\dots,2n\},$$
the expanding on the left of $\bf A$ is the largest set $\{1,\dots, x\}\subseteq \{1,\dots,a_1-1\}$ with $x\in 2\Z$ such that by Proposition \ref{prop:rem}, the entries of $\bf A$ are not removed. That is

\begin{align}a_1+t-2\ge 2(t-1+x)-x\Leftrightarrow a_1+t-2\ge 2t+x-2\Leftrightarrow  a_1-t-1\ge x, \mbox{ $x$ even }.\label{star}
\end{align}
Such $x\ge 0$ does exist because, by  Corollary \ref{cor:cups}, $a_1\ge t+1$ is a necessary and sufficient condition for the symplectic condition on $\bf A$.
Moreover,
$$a_1+t-4\ge 2(t-1+x)-x-4\Leftrightarrow a_1+t-4\ge 2(t-1-2+x)-x\Leftrightarrow a_1+t-4\ge 2(t-3+x)-x,$$
and similarly by Corollary \ref{cor:cups} for the remaining odd entries of $\bf A$.

To avoid removal pairs  in $\bf A$, the largest $x\in 2\Z$ that we can take is then $x=a_1-t-1$ \eqref{star} which means that increasing $x$ even, the  inequality \eqref{star} changes to

$$a_1+t-2<2(t-1+x+2)-(x+2)\Leftrightarrow a_1+t<2(t+x+1)-x$$
$$\Rightarrow a_1+t+(j-2)<2(t+x+j-1)-(x-j+2), \mbox{ for $2\le j\in 2\Z$.} $$

Henceforth, by Proposition \ref{prop:rem}, $(a_1+t-1+1,\dots, a_1+t-1+l_{t+1})\subseteq \rem(T)$, and  $\rem(T)=T\setminus\bf A$.

Hence, \eqref{lem0} and \eqref{lem:T}.
\end{proof}

\medskip
Thanks to Theorem \ref{thm:symplecticdecomposition}, we now combine this lemma with the previous theorem. More precisely, next lemma considers Theorem \ref{thm:symplecticdecomposition} with $k=1$ and $p_0>0$.

\begin{thm}\label{thm:xA} Let $l\in [0,2n]$ and $t_1+p_0\in[n]$, $t\in 2\Z$ such that  $2\le t_1+p_0\le min\{l,2n-l\}$ and $l-(t_1+p_0)\in 2\Z$. Let $a_1\notin 2\Z$, ${\bf A_1}=(a_1, a_1+1,\dots , a_1+t_1-2,a_1+t_1-1)$ and   $\mathbf{a} = x_1x_2\cdots x_{p_0} {\bf A_1}\in SpT_{2n}(\varpi_{t_1+p_0})$ such that $\s(x_i)\notin \mathbf{a} $, for $i=1,\dots,p_0$. Then
  $a_1\ge t_1+2p_0+1$,  and
\begin{align}
\rm{red}_{t}^{-1}:=\redu^{-1}_{|SpT_{2n}(\varpi_{t_1+p_0})}:SpT_{2n}(\varpi_{t_1+p_0})&\rightarrow SST_{2n}(\varpi_l)\quad \nonumber\\
\mathbf{a}&\mapsto\rm{red}_{t1+p_0}^{-1}(\mathbf{a})=T,\nonumber
\end{align}
where \begin{align}T=T_0(x_1)T_1\cdots (x_{p_0})  T_{p_0}\mathbf{\bf A_1}(a_1+t_1-1+1,\dots, a_1+t_1-1+ l_{t_1+p_0+1})\label{lem:T+}.
\end{align}
such that
\begin{enumerate}
\item  $\tilde l_1$  is the nonnegative even number defined by
\begin{align}\label{lem00+}
\tilde l_1=min\{a_1-t_1-2p_0-1,l-t_1-p_0\},\quad
\end{align}

\begin{align}\label{inverseredu:nofactors+}
&T_0=(1,2,\dots l_1),\\
&({x}_j)T_j=
\begin{cases}
 ({x}_j)( x_j+1,\dots, x_j+l_{j+1}),
  &
 \mbox{if } x_j\in 2\mathbb{Z}\\
 ({x}_j)( x_j+1+1,x_j+1+2,\dots, x_j+1+l_{j+1}),
 &\mbox{if } x_j\notin 2\mathbb{Z}\\
\end{cases},& 1\le j\le p_0, \label{inverseredu:nofactors2+}
\end{align}
with
$l_1,\dots, l_{p_0}$, $l_{p_0+1}$  nonnegative even numbers defined by
\begin{align}\label{l:nofactors+}
&l_1=
\begin{cases} min\{x_1-2, \tilde l_1\},& \mbox{if } x_1\in 2\mathbb{Z}\\
min\{ x_1-1, \tilde l_1\},&\mbox{if } x_1\notin 2\mathbb{Z},\\
\end{cases}
\end{align}
\begin{align}
&l_{j+1}=
\begin{cases}min\{x_{j+1}-x_j-2, \tilde l_1-\sum_{m=1}^j l_m\}, &\mbox{if }
x_j, x_{j+1}\in 2\mathbb{Z} \mbox{ or } x_j, x_{j+1}\notin 2\mathbb{Z}
\\
min\{{ x_{j+1}-x_j-1},~~ \tilde l_1-\sum_{m=1}^j l_m\},&\mbox{if }
x_j\in 2\mathbb{Z}, x_{j+1}\notin 2\mathbb{Z} \\
min\{x_{j+1}-x_j-3,  \tilde l_1-\sum_{m=1}^j l_m\},&\mbox{if }
 x_j\notin 2\mathbb{Z}, ~~~ x_{j+1}\in 2\mathbb{Z},
\end{cases},\quad \mbox{ for } 1\le j<p_0,\label{ells+}
\end{align}
 and \begin{align} l_{p_0+1}=\tilde l_1-\sum_{m=1}^{p_0}l_m,\end{align}

\item $ \displaystyle l_{t_1+p_0+1}=l-t_1-p_0- \sum_{m=1}^{p_0+1}l_m$.
\end{enumerate}

\end{thm}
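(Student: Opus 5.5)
We first establish the bound $a_1\ge t_1+2p_0+1$. This is the case $k=1$ of the symplectic condition \eqref{3} in Theorem \ref{thm:symplecticdecomposition}, with $t^{(0)}=p_0$. Concretely, the last entry $a_1+t_1-1$ of ${\bf A_1}$ sits in row $p_0+t_1$, so symplecticity gives $a_1+t_1-1\ge 2(p_0+t_1)-1$, that is, $a_1\ge t_1+2p_0$. Since $a_1$ is odd and $t_1$ is even, this improves to $a_1\ge t_1+2p_0+1$, exactly as in the proof of Corollary \ref{cor:cups}. Consequently $\tilde l_1=\min\{a_1-t_1-2p_0-1,\,l-t_1-p_0\}$ is a nonnegative even number, because $a_1$ is odd and $l-t_1-p_0\in 2\Z$. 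Arguing as at the start of the proof of Theorem \ref{thm:noconsecutive}, the $l_j$ are nonnegative and even, they sum to $\tilde l_1$, and $T$ has length $l$.

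The construction proceeds in two blocks, mirroring Lemma \ref{lem:reductionsurj} and Theorem \ref{thm:noconsecutive}. The prefix $X=x_1\cdots x_{p_0}$ satisfies $\s(X)\cap X=\emptyset$. It also lies below $a_1-1$: indeed $\s(x_{p_0})\ne a_1$ forces $x_{p_0}\le a_1-1$, and $x_{p_0}=a_1-1$ would be even with $\s(x_{p_0})=a_1-2$. We therefore expand $X$ inside the window $[1,a_1-1]$ using the ``earliest possible'' set $W=[2n]\setminus(X\sqcup\s(X)\sqcup{\bf A_1})$. The first $\tilde l_1$ elements of $W$ lying below $a_1$ are distributed between consecutive $x_j$, exactly as in Theorem \ref{thm:noconsecutive} with $l-t$ replaced by $\tilde l_1$. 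This gives formulas \eqref{inverseredu:nofactors+}--\eqref{ells+}, and $T_{p_0}$ absorbs the remainder $l_{p_0+1}$ just below $a_1$. Such elements exist because $|[1,a_1-1]\setminus(X\sqcup\s(X))|=a_1-1-2p_0\ge t_1+\tilde l_1$. The rest, $l-t_1-p_0-\tilde l_1$ entries, is appended as the interval above $a_1+t_1-1$, as in Lemma \ref{lem:reductionsurj}.

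Next we check that $\redu(T)={\bf a}$ using the recursive criterion of Proposition \ref{prop:rem}(2), reading $T$ from the top. On the prefix $T_0(x_1)T_1\cdots(x_{p_0})T_{p_0}$, Theorem \ref{thm:noconsecutive} with $l$ replaced by $p_0+\tilde l_1$ gives $\rem=\bigsqcup T_j$ of size $\tilde l_1$. Proposition \ref{prop:removcontain}(5) guarantees that this removal set persists in $T$. The $x_j$ are never removed, because their $\s$-partners are absent by construction. For ${\bf A_1}$, each odd entry $a_1+2m$ (with $0\le m<t_1/2$) sits at position $i=p_0+\tilde l_1+2m+1$, and all previously removed entries number $\tilde l_1$. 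Non-removal requires $a_1+2m\ge 2i-\tilde l_1=2p_0+\tilde l_1+4m+2$. The binding case is $m=t_1/2-1$, where the requirement is $a_1\ge 2p_0+\tilde l_1+t_1$, which follows from $\tilde l_1\le a_1-t_1-2p_0-1$. This is the computation \eqref{star} of Lemma \ref{lem:reductionsurj}, shifted by $2p_0$.

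Finally we show that every pair in the top tail is removed. If $\tilde l_1=l-t_1-p_0$, the tail is empty and there is nothing to prove. Otherwise $\tilde l_1=a_1-t_1-2p_0-1$ is exactly maximal. The odd entry $a_1+t_1+j-1$ with $j$ odd sits at position $p_0+\tilde l_1+t_1+j$, and $j-1$ tail entries have already been removed. The removal inequality becomes $a_1+t_1+j-1<2(p_0+\tilde l_1+t_1+j)-(\tilde l_1+j-1)$, which reduces to $0<j+1$ after substituting the value of $\tilde l_1$. This last verification, together with the careful bookkeeping that the counts $|\rem|$ fed into Proposition \ref{prop:rem} are exactly $\tilde l_1$ before ${\bf A_1}$, is the main obstacle. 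I would handle it by an induction on the tail pairs identical to the end of the proof of Theorem \ref{thm:noconsecutive}. Injectivity of $\redu$ (Proposition \ref{prop:redu}(3)) then identifies $T$ as $\redu^{-1}({\bf a})$.
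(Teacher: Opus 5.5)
Your proposal is correct and follows essentially the paper's route. You expand the prefix by the $W$-construction of Theorem \ref{thm:noconsecutive} with $l-t$ replaced by $\tilde l_1$, fix $\tilde l_1$ by the non-removal inequality for ${\bf A_1}$ from Lemma \ref{lem:reductionsurj} shifted by $2p_0$, and add more verification than the paper does (non-removal of the $x_j$, removal of the tail pairs). One harmless arithmetic slip: after substituting $\tilde l_1=a_1-t_1-2p_0-1$, the tail inequality reduces to $a_1+t_1+j-1<a_1+t_1+j$ (equivalently $\tilde l_1\ge a_1-t_1-2p_0-1$, independent of $j$), not to $0<j+1$.
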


\begin{proof}It is a consequence of Lemma \ref{lem:reductionsurj} and Theorem \ref{thm:noconsecutive}. Since $\bf a$  is symplectic, and $a_1$ is odd and $t_1$ even, $a_1-t_1-1\ge 2(t_1+p_0)-1\Leftrightarrow a_1\ge t_1+2p_0+1$.
 For $p_0=0$, we recover Lemma \ref{lem:reductionsurj}, $\tilde l_1=l_1$ and $\tilde l_{t+1}=l-t_1-l_1$. For $t_1=0$, $\bf A_1$ is empty and we recover Theorem \ref{thm:noconsecutive} with $\tilde l_1=l-p_0$.

From the procedure given in the proof of Theorem \ref{thm:noconsecutive}

$$s({\bf a})=\s(x_1\cdots x_{p_0})\sqcup {\bf A_1} \mbox { with $\s({\bf A_1})={\bf A_1}$ and } {\bf a}\cup \s({\bf a})=\s(x_1\cdots x_{p_0})\sqcup (x_1\cdots x_{p_0})\sqcup {\bf A_1}$$
and we want $\rem(T)=T\setminus {\bf a}$. Consider
$$W=[2n]\setminus \bigg({\bf a}\sqcup \s({\bf a})\bigg)=\bigg(\{1,\dots, a_1-1\}\setminus\big(\s(x_1\cdots x_{p_0})\sqcup (x_1\cdots x_{p_0})\big)\bigg)\sqcup \{a_1+t_1-1+1,\dots,2n\}.$$
So the expanding on the left hand side of $\bf a$ is the  set $X\subseteq \{1,\dots, a_1-1\}\setminus(s(x_1\cdots x_{p_0})\sqcup (x_1\cdots x_{p_0}))$ consisting of the first $x$ elements with  largest $|X|=x\in 2\Z$ so that by Proposition \ref{prop:rem}, the removal inequality is satisfied,
$\rem(X{\bf A_1})=X$,
$$a_1+t_1-2\ge 2(t_1-1+p_0+x)-x\Leftrightarrow a_1+t_1-2\ge 2t_1+2p_0+x-2\Leftrightarrow  a_1-t_1-2p_0\ge x$$
$$\Leftrightarrow a_1-t_1-2p_0-1\ge x\ge 0, \mbox{ $x$ even}.$$

To avoid removal pairs in $\bf A_1$, the largest $x$ that we can take is then $a_1-t_1-2p_0-1$. This justifies $\tilde  l_1$ \eqref{lem00+}, which  gives \eqref{lem:T+}, \eqref{inverseredu:nofactors+}, \eqref{inverseredu:nofactors2+} and \eqref{ells+}.
\end{proof}

\begin{ex}\label{ex:discussion}
\begin{enumerate}

        \item Let $n=7$, $l=11$, $t_1=2$, $p_0=1$, $t_1+p_0=3\le min\{l=11,14-11\}$, $l-t-p_0=11-3=8$ even.
        \begin{itemize}
        \item With  
        $$\mathbf{a} = 4 (9,10)\in SpT_{14}(\varpi_3),~~~x_1=4\in 2\Z$$ 
        
        Then $a_1-t_1-2p_0-1= 9-2-2-1=4$ and $\tilde l_1=min\{9-5=4,8\}=4$,
       $$ l_1=
 min\{x_1-2, \tilde l_1\}=min\{4-2,4\}=2\Rightarrow T_0=(1,2)$$
 and $$l_{p_0+1}=\tilde l_1-\sum_{m=1}^{p_0}l_m=4-2=2\Rightarrow T_1=(x_1+1,x_1+l_{p_0+1})=(5,6)$$
        
       Finally,
       $ \displaystyle l_{t_1+p_0+1}=l-t_1-p_0- \sum_{m=1}^{p_0+1}l_m=11-3-l_1-l_{p_0+1}=11-3-4=4$.

        Then $$T=(1,2){\bf 4}(5,6){\bf(9,10)}(11,12,13,14)\in
        SST_{14}(\varpi_{11}), \quad \redu(T)=\bf a.$$

\item With $\mathbf{a} = 8 (9,10)\in SpT_{14}(\varpi_3),$, one has
$$T=(1,2, 3,4)8(9,10)(11,12,13,14)\in
        SST_{14}(11), \quad \redu(T)=\bf a.$$
\end{itemize}

        \item Let $n=7$, $l=10$, $t_1=2$, $p_0=2$, $t_1+p_0=4\le min\{l=10,14-10\} $, $l-t_1-p_0=10-4=6$ even,and
        
        $${\bf a}=1, 7, (11, 12)\in SpT_{14}(\varpi_4),~~ x_1=1,~x_2=7\notin 2\Z,~~a_1- t_1-2p_0-1=11-2-4-1=4$$ 
        
       Then  $\tilde l_1=min\{4,6\}=4$ and $l_1= min\{x_1-1, \tilde l_1\}=min\{0,\tilde l_1\}=0\Rightarrow T_0=(),$
       $$l_2=min\{x_{2}-x_1-2, \tilde l_1-l_1\}=\min\{4,4-0\}=4\Rightarrow $$
       $$\Rightarrow T_1=(1+1+1, 1+3,1+4, 1+1+l_2)=(3,4,5,6),$$
        and
        $$l_{p_0+1}=\tilde l_1-\sum_{m=1}^{p_0}l_m=4-4=0\Rightarrow T_2=().$$
        
Finally,  $ \displaystyle l_{t_1+p_0+1}=l-t_1-p_0- \sum_{m=1}^{p_0+1}l_m=6-l_1-l_{p_0+1}=6-4=2$.
            Therefore 
            $$T={(\bf 1)}(3,4,5,6) {\bf(7) (11, 12)}(13,14)\in SST_{14}(\varpi_{10}).$$

         This example point $(2)$ may also be computed backwards as in \cite[Example 4.4.2]{watanabe} using combinatorial $R$-matrices. See the discussion in Section \ref{sec:final}.
\item Let $n=9$, $l=10$, $t_1=4$, $p_0=2$ and $t=t_1+p_0=4+2=6<min\{l=10,18-10\} $, $l-t_1-p_0=10-6=4$ even. Let
$${\bf a}=1,7,(11,12,13,14),~~~~\mbox{ where } ~~ a_1- t_1-2p_0-1=11-~4-4-1=2$$

Then 

$$T={\bf 1}(3,4){\bf 7}({\bf 11,12,13,14}) (15,16)\in SST_{18}(\varpi_{10}).$$

            \end{enumerate}

\end{ex}

\medskip

We now generalize a bit more by adjoining  a word in the conditions of Theorem \ref{thm:noconsecutive} to the right of ${\bf A_1}$. It  considers Theorem \ref{thm:symplecticdecomposition} with $k=1$ and $p_0,p_1>0$.

\begin{thm}\label{thm:xAy} Let $l\in [0,2n]$ and $t_1+p_0+p_1\in[n]$, $t\in 2\Z$ such that  $2\le t_1+p_0+p_1\le min\{l,2n-l\}$ and $l-(t_1+p_0+p_1)\in 2\Z$. Let
$$\mathbf{a} = x_1x_2\cdots x_{p_0} {\bf A_1}y_1\cdots y_{p_1}\in SpT_{2n}(\varpi_{t_1+p_0+p_1})$$
 where $a_1\notin 2\Z$, ${\bf A_1}=(a_1, a_1+1,\dots , a_1+t_1-2,a_1+t_1-1)$ and
 $$\s(x_1x_2\cdots x_{p_0}y_1\cdots y_{p_1})\cap\mathbf{a}=\emptyset .$$ Then
  $a_1\ge t_1+2p_0+1$,  and
\begin{align}
\rm{red}_{t_1+p_0+p_1}^{-1}:=\redu^{-1}_{|SpT_{2n}(\varpi_{t_1+p_0+p_1})}:SpT_{2n}(\varpi_{t_1+p_0+p_1})&\rightarrow SST_{2n}(\varpi_l)\nonumber\\
\mathbf{a}&\mapsto\rm{red}_{t_1+p_0+p_1}^{-1}(\mathbf{a})=T,\nonumber
\end{align}
where \begin{align}T=T_0(x_1)T_1\cdots  (x_{p_0})T_{p_0}\mathbf{A_1}Y_0(y_1)Y_1\cdots (y_{p_1})Y_{p_1}
\end{align}
is such that
\begin{enumerate}
\item  $\tilde l_1$ 
is the nonnegative even number defined by
\begin{align}
\tilde l_1=min\{a_1-t_1-2p_0-1,l-t_1-p_0-p_1\},\quad
\end{align}

\begin{align}\label{inverseredu:nofactors+++}
&T_0=(1,2,\dots l_1),\\
&({x}_j)T_j=
\begin{cases}
 ({x}_j)( x_j+1,\dots, x_j+l_{j+1}),
  &
 \mbox{if } x_j\in 2\mathbb{Z}\\
 ({x}_j)( x_j+1+1,x_j+1+2,\dots, x_j+1+l_{j+1}),
 &\mbox{if } x_j\notin 2\mathbb{Z}\\
\end{cases},& 1\le j\le p_0, \label{inverseredu:nofactors2++++}
\end{align}
with
$l_1,\dots, l_{p_0}$, $l_{p_0+1}$  nonnegative even numbers defined by
\begin{align}\label{l:nofactors++++}
&l_1=
\begin{cases} min\{x_1-2, \tilde l_1\},& \mbox{if } x_1\in 2\mathbb{Z}\\
min\{ x_1-1, \tilde l_1\},&\mbox{if } x_1\notin 2\mathbb{Z},\\
\end{cases}
\end{align}
\begin{align}
&l_{i+1}=
\begin{cases}min\{x_{j+1}-x_j-2, \tilde l_1-\sum_{m=1}^j l_m\}, &\mbox{if }
x_j, x_{j+1}\in 2\mathbb{Z} \mbox{ or } x_j, x_{j+1}\notin 2\mathbb{Z}
\\
min\{{ x_{j+1}-x_j-1},~~ \tilde l_1-\sum_{m=1}^j l_m\},&\mbox{if }
x_j\in 2\mathbb{Z}, x_{j+1}\notin 2\mathbb{Z} \\
min\{x_{j+1}-x_j-3,  \tilde l_1-\sum_{m=1}^j l_m\},&\mbox{if }
 x_j\notin 2\mathbb{Z}, ~~~ x_{j+1}\in 2\mathbb{Z},
\end{cases},\quad \mbox{ for } 1\le j<p_0,\label{ells+++}
\end{align}
and \begin{align}
 l_{p_0+1}=\tilde l_1-\sum_{m=1}^{p_0}l_m;\end{align}

\item put $y_0:=a_1+t_1-1\in 2\Z$, and define
\begin{align}\tilde l_{2}:=l-t_1-p_0-p_1- \sum_{m=1}^{p_0+1}l_m,
\end{align}

\begin{align}&Y_0=(y_0+1,y_0+2,\dots, y_0+ l_{ t_1+p_0+1}),\\
&({y}_j)Y_j=
\begin{cases}
 ({y}_j)( y_j+1,\dots, y_j+ l_{t_1+p_0+j+1}),
  &
 \mbox{if } y_j\in 2\mathbb{Z}\\
 ({y}_j)( y_j+1+1,y_j+1+2,\dots, y_j+1+ l_{t_1+p_0+j+1}),
 &\mbox{if } y_j\notin 2\mathbb{Z}\\
\end{cases},& 1\le j\le p_1, \label{inverseredu:nofactors2+++}
\end{align}
with
$ l_{t_1+p_0+j+1}$, $0\le j\le p_1$,
 nonnegative even numbers defined by
\begin{align}\label{l:nofactors+++}
& l_{t_1+p_0+1}=
\begin{cases} min\{y_1-y_0-2, \tilde l_2\},& \mbox{if } y_1\in 2\mathbb{Z}\\
min\{ y_1-y_0-1, \tilde l_2\},&\mbox{if } y_1\notin 2\mathbb{Z},\\
\end{cases}
\end{align}
\begin{align}
& l_{t_1+p_0+j+1}=
\begin{cases}min\{y_{j+1}-y_j-2, \tilde l_2-\sum_{m=1}^j  l_{t_1+p_0+m}\}, &\mbox{if }
y_j, y_{j+1}\in 2\mathbb{Z} \mbox{ or } y_, y_{j+1}\notin 2\mathbb{Z}
\\
min\{{ y_{j+1}-y_j-1},~~ \tilde l_2-\sum_{m=1}^j  l_{t_1+p_0+m}\},&\mbox{if }
y_j\in 2\mathbb{Z}, y_{j+1}\notin 2\mathbb{Z} \\
min\{y_{j+1}-y_j-3,  \tilde l_2-\sum_{m=1}^j  l_{t_1+p_0+m}\},&\mbox{if }
 y_j\notin 2\mathbb{Z}, ~~~ ,y_{j+1}\in 2\mathbb{Z},
\end{cases},\quad \mbox{ for } 1\le j<p_1,\label{ells++++}
\end{align}
and

\begin{align} l_{t_1+p_0+p_1+1}=\tilde l_2-\sum_{m=1}^{p_1}l_{t_1+p_0+m}.\end{align}
 \qquad\qquad\qquad\qquad\qquad\qquad\qquad\qquad\qquad\qquad\qquad\qquad\qquad\qquad\qquad\qquad\qquad\qquad\qquad
 \qquad$\Box$
\end{enumerate}
\end{thm}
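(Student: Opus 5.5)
The plan is to combine Theorem \ref{thm:xA}, which expands the prefix $x_1\cdots x_{p_0}{\bf A_1}$, with the procedure in the proof of Theorem \ref{thm:noconsecutive}, applied to the suffix $y_1\cdots y_{p_1}$. The suffix plays the role of a column without $\s$-fixed pieces, with the left boundary $a_0:=0$ replaced by $y_0=a_1+t_1-1$.

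First I would check that the data are well defined. The bound $a_1\ge t_1+2p_0+1$ is the symplectic condition \eqref{3} of Theorem \ref{thm:symplecticdecomposition} with $k=1$; it gives $\tilde l_1\ge 0$. Since $a_1$ is odd and $t_1$ is even, $\tilde l_1$ is even. Exactly as in the first paragraph of the proof of Theorem \ref{thm:noconsecutive}, the successive minima force $0\le l_j$ and $\sum_{m=1}^{p_0+1}l_m=\tilde l_1$. Then $\tilde l_2=l-t-\tilde l_1\ge0$, and the same telescoping argument for the $y$-block gives $\sum_{j}l_{t_1+p_0+j}=\tilde l_2$. Hence $\ell(T)=l$.

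Evenness of each $l$ follows from \eqref{minmax} and Lemma \ref{lem:swapping}: the gaps $x_{j+1}-x_j-c$ (with $c\in\{1,2,3\}$ chosen by parity) count the elements of $W=[2n]\setminus({\bf a}\sqcup\s({\bf a}))$ lying between $\{x_j,\s(x_j)\}$ and $\{x_{j+1},\s(x_{j+1})\}$. These elements form unions of pairs $\{z,\s(z)\}$.

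Next I would identify $T\setminus{\bf a}$ as a prescribed subset of $W$, namely the first $\tilde l_1$ elements of $W\cap[1,a_1-1]$ together with the first $\tilde l_2$ elements of $W\cap[y_0+1,2n]$. Here $\s({\bf A_1})={\bf A_1}$ and $y_0$ is even, so the block $Y_0$ starts at the odd number $y_0+1$. Since $\s(y_1)\notin{\bf a}$, the number $y_0+1$ is not in ${\bf a}$ when it lies in $W$; if $y_1=y_0+1$, then $l_{t_1+p_0+1}=0$ by the case formula. Every added block is a run of consecutive integers starting odd and ending even. Thus $T$ is a strictly increasing column, and by Proposition \ref{prop:rem}(3) its candidate removal pairs are exactly of the form $\{z,\s(z)\}$ with $z\notin{\bf a}$.

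The core is to show $\rem(T)=T\setminus{\bf a}$ using the criterion of Proposition \ref{prop:rem}(2). I would do this in three stages. On the prefix $T_0(x_1)\cdots(x_{p_0})T_{p_0}{\bf A_1}$, this is exactly Theorem \ref{thm:xA} with $l$ replaced by $l':=t_1+p_0+\tilde l_1$. That theorem gives that all $T_j$ are removed and no entry of ${\bf A_1}$ is removed; maximality of $\tilde l_1$ is not needed at this stage. By Proposition \ref{prop:removcontain}(5), removal status in the prefix is unchanged by appending the suffix. Then I would induct on $p_1$ along the suffix, as in the inductive step of the proof of Theorem \ref{thm:noconsecutive}. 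When the entry $y_j$ is appended, it is not paired, because $\s(y_j)\notin T$. For each odd entry $z$ of a block $Y_j$, I would verify the removal inequality
$$z<2i-|\rem(\text{earlier entries})|,$$
where $i$ is the position of $z$ and the count $|\rem|$ equals $\tilde l_1$ plus the number of added entries to the left of $z$ in the suffix.

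The main obstacle is this inequality for the suffix blocks. There are two cases. When $\tilde l_1=a_1-t_1-2p_0-1$, the left part is saturated, and the inequality for the first odd element $y_0+1$ of $Y_0$ reduces to $y_0+1<2(t_1+p_0+\tilde l_1+1)-\tilde l_1$, which is an identity check that I expect to hold with one unit of slack. When instead $\tilde l_1=l-t$, all of $\tilde l_2$ vanishes and there is nothing to check. For later odd elements, including those after some $y_j$, I would argue as in the final computations \eqref{cat}–\eqref{snail} of the proof of Theorem \ref{thm:noconsecutive}. Each step right either adds a removed pair, which raises both sides by 2, or adds an unremoved $y_j$, which raises the position by 1 while the value gap is controlled by Lemma \ref{lem:swapping}(6). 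So the inequality propagates. The delicate point is that inserting $y_j$ can consume slack, and I expect to need the saturated left part (equivalently, the maximality of $\tilde l_1$) to guarantee there is always enough.

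Finally, injectivity in Proposition \ref{prop:redu}(3) gives $\redu^{-1}({\bf a})=T$.
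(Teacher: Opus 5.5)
Your plan (Theorem~\ref{thm:xA} on the prefix, then a left-to-right check of the criterion of Proposition~\ref{prop:rem}(2) along the suffix) is the combination the paper implicitly intends. The paper itself gives no proof here; the statement ends with a bare $\Box$. Your prefix step and your base check at $y_0+1$ are correct. The gap is at the step you call the main obstacle: propagating the removal inequality across the $y_j$'s.

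\textbf{Why the proposed propagation fails.}
Lemma~\ref{lem:swapping}(6) gives \emph{lower} bounds on gaps between consecutive entries of ${\bf a}$. To keep $z<2i-|\rem|$ true you need an \emph{upper} bound on the next added odd value, so that lemma points the wrong way. Your expectation that crossing a $y_j$ may consume slack would also be fatal. In the saturated case $\tilde l_1=a_1-t_1-2p_0-1$, the check at $y_0+1$ reads $a_1+t_1<a_1+t_1+1$. That is exactly one unit of slack, the least possible, and saturating the left part cannot create more. Note also that $Y_0$ may be empty, e.g.\ if $y_1\in\{y_0+1,y_0+2\}$, so the first suffix pair need not start at $y_0+1$.

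\textbf{The missing idea: an exact identity from the greedy choice.}
The added suffix entries are the smallest elements of $W\cap(y_0,2n]$. Let $z$ be an added odd suffix entry. Every integer in $(y_0,z)$ is one of:
\begin{itemize}
\item an added entry;
\item some $y_j<z$;
\item some $\s(y_j)$ with $y_j<z$.
\end{itemize}
For the last case, one checks $\s(y_j)>y_0$, and $\s(y_j)<z\Leftrightarrow y_j<z$ because $z\notin\s({\bf a})$. Let $q$ be the number of $y_j<z$ and $r$ the number of suffix entries added below $z$. Then $z=y_0+1+2q+r$. Inductively, the position of $z$ is $i=p_0+\tilde l_1+t_1+q+r+1$ and $|\rem|=\tilde l_1+r$. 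The removal inequality becomes
\[
y_0+1+2q+r<2p_0+\tilde l_1+2t_1+2q+r+2 .
\]
Here $q$ and $r$ cancel, so the condition is equivalent to $\tilde l_1\ge a_1-t_1-2p_0-1$, uniformly for all suffix pairs. No slack is ever consumed. Keeping the last pair of ${\bf A_1}$ unremoved needs $\tilde l_1\le a_1-t_1-2p_0-1$. Together these explain why $\tilde l_2>0$ forces the saturated case, which is what the definition of $\tilde l_1$ delivers.

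\textbf{Smaller omissions.}
\begin{itemize}
\item You must check that the suffix blocks fit inside $[2n]$, i.e.\ $2n-y_0-2p_1\ge\tilde l_2$. In the saturated case this reduces to $t\le 2n-l$.
\item Proposition~\ref{prop:removcontain}(5) only shows that removed prefix entries stay removed. That no prefix entry \emph{becomes} removed uses the fact that the prefix ends with the even entry $y_0$, so every new pair lies wholly in the suffix.
\item The pairs of ${\bf A_1}$ are also candidate removal pairs. This contradicts your sentence that, by Proposition~\ref{prop:rem}(3), the candidates are exactly the $\{z,\s(z)\}$ with $z\notin{\bf a}$.
\end{itemize}
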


\medskip
\begin{ex}

Let $n=9$, and  
$$\mathbf{a} = 8 (9,10)(11, 16)\in SpT_{18}(\varpi_5), \mbox{  $x_1=8\in 2\Z$ and $y_1=11\notin2\Z$, $y_2=16\in 2\Z$}.$$

Let $l\in [0,18]$, $l=11$ and $p_0=1$, $t_1=p_1=2$, $5=t_1+p_0+p_1\in[n]$, $t_1\in 2\Z$ such that  $2\le t_1+p_0+p_1\le min\{l,2n-l\}$ and $l-(t_1+p_0+p_1)=6\in 2\Z$. One has $a_1=9\notin 2\Z$,   such that $\s(x_1)=7, \s(y_j)\notin \mathbf{a} $, for $j=1,2$. Then
  $a_1\ge t_1+2p_0+1=5$, and the largest is $x=4$. Then
  
  $$4=\tilde l_1=min\{a_1-t_1-2p_0-1,l-t_1-p_0-p_1\}=min\{9-5=4,11-5=6\}$$
  $$4=l_1=min\{x_1-2, \tilde l_1\}=min\{6,4\}=\tilde l_1\Rightarrow l_{p_0+1}=l_2=0$$
and $$T_0=(1234),~ ~T_1=().$$

Put $y_0=a_1+t_1-1=9+2-1=10\in 2\Z$, and define
\begin{align}\tilde l_{2}:=l-t_1-p_0-p_1- \sum_{m=1}^{p_0+1}l_m=6-l_1-l_2=2,
\end{align}
Then  $$l_{t_1+p_0+1}=
min\{ y_1-y_0-1, \tilde l_2\}=\{11-10-1,2\}=0\Rightarrow Y_0=()$$
$$l_{t_1+p_0+p_1}=min\{y_{2}-y_1-3,  \tilde l_2-  l_{t_1+p_0+1}\}=\min\{16-11-3=2,2\}=2,
 ~\Rightarrow Y_1=(y_1+1+1,y_1+2+1)=(13,14),$$
 
 and $$l_{t_1+p_0+p_1+1}=\tilde l_2-\sum_{m=1}^{p_1}l_{t_1+p_0+m}=2-2=0\Rightarrow Y_2=()$$
Therefore,
  $$T=(1234){\bf 8(9,10)11}(13,14){\bf 16}\in SST_{2n}(\varpi_{11}).$$
\end{ex}

\bigskip

Next theorem was partially stated in a slightly different manner \cite{azreco} that  we now uniformize and prove. It considers symplectic columns in the conditions of Theorem   \ref{thm:symplecticdecomposition}
 with $k=2$ and $p_0=p_1=p_2=0$.

\begin{thm}\label{lem:reductionsurj2} Let $l\in [0,2n]$ and  $ t_1\neq 0, t_2\neq 0\in 2\Z$ such that $t=t_1+t_2\in[n]$,
$$4\le t=t_1+t_2\le min\{l,2n-l\}$$ and $l-t\in 2\Z$. Let 
\begin{align*}\mathbf{a} &={\bf A_1}\bigoplus{\bf A_2}\in SpT_{2n}(\varpi_{t_1+t_2})
\end{align*}
 where $a_1,a_2\notin 2\Z$, ${\bf A_1}=({a_1}, a_1+1,\dots , a_1+t_1-2,a_1+t_1-1)$ and ${\bf A_2}=({a_2}, a_2+1,\dots , a_2+t_2-2,a_2+t_2-1)$.
Then, $a_1\ge t_1+1$, $a_2\ge 2t_1+t_2+1$ and
\begin{align}
\rm{red}_{t}^{-1}:=\redu^{-1}_{|SpT_{2n}(\varpi_t)}:SpT_{2n}(\varpi_t)&\rightarrow SST_{2n}(\varpi_l)\nonumber\\
\mathbf{a}&\mapsto\rm{red}_{t}^{-1}(\mathbf{a}):=T,\nonumber
\end{align}
where \begin{align}\label{xx}T&=(1,\dots, l_1){\bf {A_1}}( a_1+t_1-1+1,\dots,  a_1+ t_1-1+l_2){\bf{A_2}}( {a_2}+ t_2-1+1,\dots,  a_2+ t_2-1+l_{t+1}),
\end{align}
with $l_1, l_2$, $l_{t+1}$  nonnegative even numbers defined by
\begin{align}\label{prop02}
&l_1=min\{a_1-t_1-1,a_2-2t_1-t_2-1,l-t\},\\
&l_2=min\{(a_2-2t_1-t_2-1)-l_1,l-t-l_1\}\\
& l_{t+1}=l-t-\sum_{i=1}^{2}l_i.
\end{align}
\end{thm}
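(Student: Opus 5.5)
The plan is to verify directly that $T$ in \eqref{xx} is a column of length $l$ with $\rem(T)=T\setminus{\bf a}$, and then invoke the injectivity of $\redu$ from Proposition \ref{prop:redu}. This is the same template as the proof of Lemma \ref{lem:reductionsurj}, applied block by block. The inequalities $a_1\ge t_1+1$ and $a_2\ge 2t_1+t_2+1$ come straight from Corollary \ref{cor:cups} with $k=2$. These show that $a_1-t_1-1$ and $a_2-2t_1-t_2-1$ are nonnegative even numbers. Since $l-t$ is even, $l_1,l_2,l_{t+1}$ are nonnegative and even, and they sum to $l-t$. The column $T$ is strictly increasing provided the middle filler does not reach $a_2$. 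That is, we need $a_1+t_1-1+l_2<a_2$.

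\emph{Step 1: the middle filler fits.} Using $l_2\le a_2-2t_1-t_2-1-l_1$ and $l_1\le a_1-t_1-1$, we expect the inequality $a_1+t_1-1+l_2<a_2$ to follow from $a_2-a_1\ge t_1+2$ in Definition \ref{maximalintervaldecompose}. If the bound coming from $l_2$ is not sharp enough here, I would fall back on the cruder estimate $l_2\le a_2-a_1-t_1-2$. That estimate holds because the filler occupies only the gap $[a_1+t_1,a_2-1]$, and $W=[2n]\setminus{\bf a}$ as in the proof of Lemma \ref{lem:reductionsurj}. I will also check that the first filler sits below $a_1$: $l_1\le a_1-t_1-1<a_1$.

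\emph{Step 2: no entry of ${\bf A_1}$ or ${\bf A_2}$ is removed.} By Proposition \ref{prop:rem}(2), and since $\s({\bf A_i})={\bf A_i}$, each pair $(a_i+2j,a_i+2j+1)$ is a candidate for removal. I would scan $T$ from the top, assuming inductively that everything before the current block has been removed exactly as intended: the fillers are removed and the ${\bf A}$-entries are not. For ${\bf A_1}$ the computation is literally \eqref{star} with $x=l_1\le a_1-t_1-1$. For ${\bf A_2}$, at the even entry $a_2+2j+1$ the number of previously removed entries is $l_1+l_2$. The removal test (b) then fails exactly when
$$a_2+2j+1\ge 2(l_1+t_1+l_2+2j+2)-(l_1+l_2)-1 .$$
The worst case is $j=0$, and there the test reduces to $a_2-2t_1-t_2-1\ge l_1+l_2$. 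This is the origin of the term $a_2-2t_1-t_2-1$ in both $l_1$ and $l_2$. Larger $j$ follow as in the reverse induction of Corollary \ref{cor:cups}.

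\emph{Step 3: every filler entry is removed.} The filler $(1,\dots,l_1)$ is removed by Corollary \ref{empty0}. For the middle filler and the tail filler I would repeat the end of the proof of Lemma \ref{lem:reductionsurj}, checking condition (b) for consecutive odd/even pairs $(y_0+j,y_0+j+1)$ with $y_0=a_1+t_1-1$ (respectively $a_2+t_2-1$). The removed count grows by $2$ at each pair while the index grows by $2$. So it suffices to verify the first pair of each filler, which gives the inequality $a_1+t_1+1<2(l_1+t_1+2)-l_1-1$, i.e.\ $a_1<l_1+t_1+2$. \textbf{This is the main obstacle.} That inequality holds when $l_1=a_1-t_1-1$, but not in general when $l_1$ is the smaller value $a_2-2t_1-t_2-1$ or $l-t$. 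So the minimum defining $l_1$ has to be split into cases. When $l_1=l-t$, both $l_2$ and $l_{t+1}$ vanish and nothing remains to check. When $l_1=a_2-2t_1-t_2-1<a_1-t_1-1$, we get $l_2=0$, and the tail filler must instead be handled against ${\bf A_2}$, with the inequality $a_2+t_2+1<2(l_1+t+2)-l_1-1$, which is equality-tight precisely by the choice of $l_1$. The general principle is that a nonempty filler is only ever placed right after a block whose slack has been exhausted. Once these three steps are done, $\rem(T)=T\setminus{\bf a}$, hence $\redu(T)={\bf a}$, and injectivity gives $T=\redu^{-1}({\bf a})$.
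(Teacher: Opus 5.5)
Your route is essentially the paper's. Both arguments check block by block, via Proposition~\ref{prop:rem}(2), that the entries of ${\bf A_1},{\bf A_2}$ survive and every filler entry is removed, conclude $\rem(T)=T\setminus{\bf a}$, and then invoke injectivity. The paper phrases this as taking the largest admissible even filler $x=a_2-2t_1-t_2-1$ to the left of ${\bf A_2}$ and splitting it. Your explicit case split on which term attains the minimum in $l_1$ and $l_2$ is more careful than the paper's sketch.

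Two local steps need repair.

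\emph{Step~2 has the worst case backwards.} The test simplifies to $a_2-2t_1-2j-2\ge l_1+l_2$, which gets harder as $j$ grows. At $j=0$ it only gives $a_2-2t_1-2\ge l_1+l_2$. The inequality you wrote, $a_2-2t_1-t_2-1\ge l_1+l_2$, is what the last pair $j=t_2/2-1$ gives after the parity adjustment ($a_2-2t_1-t_2$ is odd and $l_1+l_2$ is even). So the worst case is $j=t_2/2-1$, and the smaller $j$ follow from it, not the larger ones.

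\emph{Step~1 does not prove $a_1+t_1-1+l_2<a_2$ as written.} The bound $l_1\le a_1-t_1-1$ points the wrong way for controlling $l_2\le a_2-2t_1-t_2-1-l_1$; for instance, $l_1=0$ would allow $l_2$ up to $a_2-2t_1-t_2-1$. The fallback, ``the filler only occupies the gap,'' is circular. Use your own Step~3 observation instead: $l_2>0$ forces $l_1=a_1-t_1-1$. Hence $l_2\le a_2-a_1-t_1-t_2$, and the middle filler ends at most at $a_2-t_2-1$.

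\emph{The tail needs the same treatment.} If $l_{t+1}>0$, then $l_2<l-t-l_1$, so $l_2$ equals its first argument and $l_1+l_2=a_2-2t_1-t_2-1$. This covers the case $l_1=a_1-t_1-1$ with a nonempty tail, which you only addressed through your ``general principle.'' It also shows the last entry is $a_2+t_2-1+l_{t+1}=l+t\le 2n$, so $T\in SST_{2n}(\varpi_l)$, which you did not check.
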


\begin{proof}
Let ${\bf A}={\bf A_1}\sqcup {\bf A_2}$.
One has $s({\bf A})=\bf A$.
 Let
 $$W=[2n]\setminus \left({\bf A}\sqcup s({\bf A})\right)=\{1,\dots,a_1-1\}\sqcup \{a_1+t_1,\dots,a_2-1\}\sqcup \{a_2+t_2,\dots, 2n\}$$

 The expanding on the left of $\bf A_2$ is the  set $X\subseteq \{1,\dots,a_2-1\}\setminus {\bf A_1}$ with largest cardinality   $x=|X|\in 2\Z$ such that by Proposition \ref{prop:rem}, the entries of $\bf A_2$ are not removed by the removal operation, that is, we want

\begin{align}a_2+t_2-2\ge 2(t-1+x)-x\Leftrightarrow a_2+t_2-2\ge 2t_1+2t_2+x-2\Leftrightarrow  a_2-2t_1-t_2-1\ge x, \mbox{ $x$ even }.\label{star3}
\end{align}
Such $x\ge 0$ does exist because $a_2\ge 2t_1+t_2+1$ is imposed by the symplectic condition on $\bf A_2$.
To avoid removal pairs  in $\bf A_2$, the largest $x\in 2\Z$ that we can take is then $x=a_2-2t_1-t_2-1$ which means that increasing $x$ even, the  inequality \eqref{star} changes to

$$a_2+t_2-2<2(t_2-1+x+2)-(x+2)\Leftrightarrow a_2+t_2<2(t_2+x+1)-x$$
$$\Rightarrow a_2+t+(j-2)<2(t_2+x+j-1)-(x-j+2), \mbox{ for $2\le j\in 2\Z$.} $$

Henceforth, by Proposition \ref{prop:rem}, $(a_2+t_2-1+1,\dots, a_2+t_2-1+l_{t+1})\subseteq \rem(T)$.

Now the set $X$ with cardinality $x$ has to be distributed between $\{1,\dots, a_1-1\}$ and $\{a_2+t_1,\dots, a_2-1\}$ and we define two sets $X_1$ and $X_2$. The part that goes to
$\{1,\dots, a_1-1\}$  is by Lemma \ref{lem:reductionsurj} bounded by $a_1-t_1-1$. Let $X_1=\{1,\dots,l_1\}$   and $l_1=min\{a_1-t_1-1,a_2-2t_1-t_2-1,l-t\}$.
If $l_2=0$ it means $l_1=a_2-2t_1-t_2-1$ or $l_1=l-t$ and nothing is left and $X_2=\emptyset$. Otherwise, we define
$X_2=\{a_1+t_1-1+1,\dots, a_1+t_1-1+l_2\}$ with $l_2=min\{a_2-2t_1-t_2-1)-l_1,l-t-l_1\}$.

Therefore
 \eqref{xx} and
$\rem(T)=T\setminus {\bf A}$.
\end{proof}

\bigskip
\begin{ex}Let $n=12$, $k=2$.
\begin{enumerate}
\item  Let $l=12$, $t_1=2<t_2=4$, $t=6\le min\{12, 24-12\},$ $l-t=6$, and

$${\bf a}=(9,10)(17, 18, 19, 20)\in SpT_{24}(\varpi_6),$$

$$a_1-t_1-1=9-2-1=6$$ 
$$a_2-2t_1-t_2-1=8.$$ 

Then 
$$6=l_1=min\{a_1-t_1-1,a_2-2t_1-t_2-1,l-t\}=\min\{6, 8,6\}$$  
$$0=l_2=min\{(a_2-2t_1-t_2-1)-l_1,l-t-l_1\}=min\{8-6,6-6=0\}$$

$$l_{t+1}=l-t-l_1-l_2=0$$ and

$$T=(123456){\bf(9,10)(17,18,19,20)}\in SST_{24}(\varpi_{12})$$

\item $l=16$, $t=6\le min\{16, 24-16\},$ $l-t=10$, and $${\bf a}=(9,10)(17, 18, 19, 20)\in SpT_{24}(\varpi_6).$$  Then
    
    $$T=(123456){(\bf 9,\bf 10)}(11,12){(\bf 17,18,19,20)}(21,22)\in SST_{24}(\varpi_{16})$$



     \item  Let $l=16$, $t=t_1+t_2=6\le min\{16, 24-16\},$ $l-t=10$,
     $${\bf a}=(9,10,11,12)(19,20)\in SpT_{24}(\varpi_6),$$
     
     $a_2-2t_1-t_2-1=19-8-3=8$, $a_1-t_1-1=9-5=4$. 
     
     Then $l_1=4$ and $l_2=4$, $l_{t+1}=l-t-8=0$,

     $$T=(1234){\bf (9,10,11,12)}(13,14,15,16){\bf(19,20)}\in SST_{24}(\varpi_{16})$$

\item   Let $l=12$, 
$${\bf a}=(9,10,11,12)(17,18, 19, 20)\in SpT_{24}(\varpi_8),$$ where $a_2-2\times 4- 4-1=17-13=4$ and $a_1-t_1-1=4$.
Then $l_1=a_1-t_1-1=4=a_2-2t_2-t_1-1$, $l_2=0$, $l-t= 12-8=4$, $l_{t+1}=0$.

Therefore,
$$T=(1234)\bf a.$$

\item $l=16$, $t=8\le min\{16, 24-16\},$ $l-t=8$,

${\bf a}=(9,10,11,12)(15,16, 17, 18)\in SpT_{24}(\varpi_8)$, 
$$a_1\ge t_1+1\Leftrightarrow 9-5-1=3,~~a_2\ge 2t_1+t_2+1\Leftrightarrow 15- (8+4+1)=2>0$$
~

Then, $l_1=2=\min\{a_1-t_1-1,a_2-2t_1-t_2-1, l-t\}$, $l_2=0$ and $l_{t+1}=8-l_1=6$
$$T=(12){\bf a}(19,20,21,22,23,24)\in SST_{24}(\varpi_{16})$$

\item $l=14$, $t=10$, $l-t=4$, ${\bf a}=(9,10,11,12)(15,16, 17, 18, 19,20)\in SpT_{24}(\varpi_{10})$,
$l_1=l_2=0$ because $a_2-2t_1-t_2-1=15-8-6-1=0$. Then $l_{11}=4$,
and 
$$T={\bf a}(21,22,23,24)\in SST_{24}(\varpi_{14}).$$
\end{enumerate}
\end{ex}

\medskip

\medskip
 We now generalize the two previous theorems.

\begin{thm}\label{thm:intervals} Let $l\in [0,2n]$ and   $ t_i\neq 0\in 2\Z$, $i=1,\dots,k$, such that $t=t_1+\cdots+t_k\in[n]$,
$$2\le t=\sum_{i=1}^kt_i\le min\{l,2n-l\}$$ and $l-t\in 2\Z$.
Let
\begin{align*}\mathbf{a} &=\bigoplus_{i=1}^k{\bf A_i}\in SpT_{2n}(\varpi_{t})
\end{align*}
where for $i=1,\dots,k$, ${a_i}\notin 2\Z$ and
\begin{align*}&{\bf A_i}=({a_i}, a_i+1,\dots , a_i+t_i-2,a_i+t_i-1).
\end{align*} Then $a_i\ge 2(t_1+\cdots+t_{i-1})+t_i+1$, $1\le i\le k$, and

\begin{align*}
\rm{red}_{t}^{-1}:=\redu^{-1}_{|SpT_{2n}(\varpi_t)}:SpT_{2n}(\varpi_t)&\rightarrow SST_{2n}(\varpi_l)\quad\\
\mathbf{a}&\mapsto\rm{red}_{t}^{-1}(\mathbf{a}):=T,\nonumber
\end{align*}
where \begin{align*}T&=(1,\dots, l_1){\bf {A_1}},( a_1+ 1+t_1-1,\dots,  a_1+ 1+t_1-1+l_2){\bf{A_2}}
( a_2+ 1+t_2-1,\dots,  a_2+ 1+t_2-1+l_3)\\
&\cdots{\bf A_{k-1}}( a_{k-1}+ 1+t_{k-1}-1,\dots,  a_{k-1}+ 1+t_{k-1}-1+l_{k})\\
&{\bf {A_k}}( {a_k}+ 1+t_k-1,\dots,  a_k+ 1+t_k-1+l_{t+1})
\end{align*}
with
$l_1,\dots, l_k$, $l_{t+1}$  nonnegative even numbers recursively defined for $i=1,\dots,k$, by
\begin{align}
&l_i=min\{~\displaystyle (a_j-2\sum_{m=1}^{j-1}t_m-t_{j}-1)-\sum_{m=1}^{i-1}l_m,~ i\le j\le k,~~l-t-\sum_{m=1}^{i-1}l_m\},\\
& \mbox{ and}\nonumber\\
&l_{t+1}=l-t-\sum_{i=1}^{k}l_i.
\end{align}

\end{thm}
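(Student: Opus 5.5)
The constraint $a_i\ge 2(t_1+\cdots+t_{i-1})+t_i+1$ is exactly Corollary \ref{cor:cups}, so only the description of $T$ needs proof. I will show two things. First, $T\in SST_{2n}(\varpi_l)$. Second, $\rem(T)=T\setminus{\bf a}$, so that $\redu(T)={\bf a}$. Since $\redu$ is injective (Proposition \ref{prop:redu}), this gives $\redu^{-1}_t({\bf a})=T$. The argument is an induction on $k$ generalizing the proof of Theorem \ref{lem:reductionsurj2}. The cases $k=1$ and $k=2$ are Lemma \ref{lem:reductionsurj} and Theorem \ref{lem:reductionsurj2}.

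\emph{Step 1: the $l_i$ are well defined.} Set $c_j:=a_j-2\sum_{m<j}t_m-t_j-1$. These are nonnegative even integers, because $a_j$ is odd and the $t_m$ are even. Corollary \ref{cor:cups} together with $a_{j+1}-a_j\ge t_j+2$ gives $c_{j+1}-c_j=a_{j+1}-a_j-t_j-t_{j+1}\ge 2-t_{j+1}$. So the sequence $(c_j)$ need not be monotone, and this is why $l_i$ is a minimum over all $j\ge i$.

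By induction, $\sum_{m\le i}l_m=\min\{c_i,\dots,c_k,l-t\}$. This shows each $l_i\ge 0$, each $l_i$ is even, and $l_{t+1}\ge 0$. The total length is then $l$.

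For semistandardness, the block $X_i$ inserted after ${\bf A_{i-1}}$ consists of $l_i$ consecutive integers starting at $a_{i-1}+t_{i-1}$. It must end before $a_i$, i.e. $l_i\le a_i-a_{i-1}-t_{i-1}$. This follows from $l_i\le c_i-\sum_{m<i}l_m\le c_i-c_{i-1}+\ldots$. More simply, $\sum_{m\le i}l_m\le c_i$ and $\sum_{m<i}l_m\ge 0$ give $l_i\le c_i\le a_i-t_i-1$, which is not quite enough. I expect to need the sharper $\sum_{m\le i-1}l_m\ge \min(\ldots)$ to get $l_i\le c_i-c_{i-1}$ when the minimum is active. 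The last block goes up to $a_k+t_k-1+l_{t+1}\le 2n$, using $l\le 2n-t$ exactly as in Theorem \ref{thm:noconsecutive}. This bookkeeping is routine but must be done carefully.

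\emph{Step 2: the added entries are removed.} I use Proposition \ref{prop:rem}(2). Each $X_i$ is an interval of even length starting with an odd number, so it splits into pairs $\{x,\s(x)\}$. A pair in $X_i$ sitting at positions $p,p+1$ has all earlier added pairs already removed. Those number $N=\sum_{m<i}l_m$ plus the pairs of $X_i$ before it, together with $\sum_{m<i}t_m$ kept entries. The removal inequality is $x<2p-|\rem(\cdot)|$, where $p=\sum_{m<i}t_m+N+1$ and $|\rem|=N$. So it reads $x<2\sum_{m<i}t_m+N+2$. Since $x$ is at most $N+1$ for $X_1$, and for later blocks $x$ equals $a_{i-1}+t_{i-1}+(\text{offset})$, the requirement becomes $a_{i-1}+t_{i-1}-1<2\sum_{m<i}t_m+\sum_{m<i}l_m+1$. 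This must hold whenever $X_i\neq\emptyset$.

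This is where minimality bites. $X_i\neq\emptyset$ forces $\sum_{m<i}l_m=c_{i-1}$ by the greedy choice, or else the earlier minimum was attained. Substituting $c_{i-1}$ turns the inequality into $a_{i-1}+t_{i-1}-1<a_{i-1}+t_{i-1}$. The tail block $X_{t+1}$ is handled the same way using $\sum_{m\le k}l_m=\min_j c_j$.

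\emph{Step 3: no entry of ${\bf A_j}$ is removed.} The top pair of ${\bf A_j}$, namely $(a_j+t_j-2,a_j+t_j-1)$, stays because $a_j+t_j-1\ge 2(\sum_{m\le j}t_m)+\sum_{m\le j}l_m-1$. This is equivalent to $\sum_{m\le j}l_m\le c_j$, which holds by definition. The lower pairs of ${\bf A_j}$ follow by the reverse induction used in Corollary \ref{cor:cups}, since each step down decreases both sides by $2$. I expect the main obstacle to be Step 2: it requires proving that $X_i$ is nonempty only when the previous partial sum has saturated some $c_{i-1}$, rather than a later $c_j$ or $l-t$.
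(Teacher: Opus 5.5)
Your argument is correct and is the natural generalization of the paper's proof of Theorem \ref{lem:reductionsurj2}; the paper gives no separate proof for general $k$. With $S_j:=\sum_{m\le j}l_m$ and $c_j:=a_j-2\sum_{m<j}t_m-t_j-1$, Proposition \ref{prop:rem}(2) shows that ${\bf A_j}$ survives intact iff $S_j\le c_j$, and that a nonempty added block right after ${\bf A_{i-1}}$ is removed iff $S_{i-1}=c_{i-1}$. The lower pairs of ${\bf A_j}$ are automatic, but not because ``both sides drop by $2$'': moving down one pair lowers the entry by $2$ and the threshold $2i-|\rem|-1$ by $4$, so the inequality only gets easier.

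The step you single out as the main obstacle follows at once from your own identity $S_i=\min\{c_i,\dots,c_k,l-t\}$. Since $S_{i-1}=\min\{c_{i-1},S_i\}$, $l_i>0$ forces $S_{i-1}=c_{i-1}$; this also gives the separation bound $l_i\le c_i-c_{i-1}=a_i-a_{i-1}-t_{i-1}-t_i$ you were missing. For the tail, $l_{t+1}>0$ forces $S_k=c_k$ (not $\min_j c_j$), so the last entry is $a_k+t_k-1+l_{t+1}=l+t\le 2n$.
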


\bigskip
\begin{ex}Let $n=20$ and $k=3$.
\begin{enumerate}
\item Let $l=16$, $t_1=t_2=t_3=4$, $t=t_1+t_2+t_3=12\le min\{l, 2n-l\}$,  $l-t=4$, and

$${\bf a}=(9,10,11,12)(15,16,17,18)(21,22,23,24)=\bigoplus_{i=}^3 {\bf A_i}\in SpT_{40}(\varpi_{12}).
$$

Then
$$a_1-t_1-1=9-4-1=4$$
$$ a_2-2t_1-t_2=15-8-4=3$$
$$a_3-2(t_1+t_2)-t_3-1=21-16-4-1=0\Rightarrow l_3= l_2=l_1=0$$ 

$$\displaystyle 0= l_1=min\{a_j-2\sum_{m=1}^{j-1}t_m-t_j-1, ~1\le j\le 3,~~l-t=4\}=min\{4,3,0,l-t\}$$

$$0=l_2=min\{~\displaystyle (a_j-2\sum_{m=1}^{j-1}t_m-t_{j}-1)-l_1,~ 2\le j\le 3,~~l-t-l_1\}=min\{3,0,l-t\}$$
$$0=l_3=min\{~\displaystyle (a_3-2(t_1+t_2)-t_{3}-1)-l_1-l_2,~~l-t-l_1-l_2\}=min\{0,l-t\}$$
 $$l_{t+1}=l-t-l_1-l_2-l_3=l-t=4$$

and
$$T={\bf a}(25,26,27,28)\in SST_{40}(\varpi_{16}).$$
\item Let $l=18$, $t_1=4$, $t_2=2$, $t_3=6$, $t=t_1+t_2+t_3=12\le min\{l, 2n-l\}$, $l-t=6$,and 

$${\bf a}=(9,10,11,12)(15,16)(21,22,23,24,25,26)=\bigoplus_{i=}^3 {\bf A_i}\in SpT_{40}(\varpi_{12}).$$

Then
$$a_1-t_1-1=9-5=4$$
$$a_2-2t_1-t_2-1=15-8-2-1=4$$
$$a_3-2(t_1+t_2)-t_3-1=21-12-6-1=2$$
and

$$2=l_1=a_3-2(t_1+t_2)-t_3-1=\min\{9-5=4,15-8-2-1=4,a_3-2(t_1+t_2)-t_3-1=21-12-6-1=2,6\}$$
$$0=l_2=min\{15-8-2-1-l_1=4-l_1=a_2-2t_1-t_2-1-l_1=2, a_3-2(t_1+t_2)-t_3-1-l_1=2-2=0, 6-2\} $$

$$\Rightarrow l_3=0=\{(21-12-6-1=2)-l_1-l_2=0, 6-2\}$$

$l_4=l-t-l_1-l_2-l_3=6-2=4$

Then
$$T=(1,2){\bf(9,10,11,12)(15,16)}{\bf (21,22,23,24,25,26)}(27,28, 29,30)\in SST_{40}(\varpi_{18}).$$
$$\rem(T)=(1,2)(27,28,29,30)$$
\end{enumerate}
\end{ex}
\bigskip

Mixing Theorem \ref{thm:noconsecutive} and Theorem \ref{thm:intervals} we get the full formula for the inverse reduction.

\begin{thm}\label{thm:reductiexp} Let $l\in [0,2n]$ and    $ t_i\neq 0\in 2\Z$, $i=1,\dots,k$, $p_i\ge 0$, $i=0,\dots,k$, for some $k\ge 0$, and $\displaystyle t=\sum_{j=0}^kp_j + \sum_{j=1}^kt_j   \in [n]$ such that
$$ t\le min\{l,2n-l\}$$ and
$l-t\in 2\Z$.

Let
\begin{align}
{\bf a}=
X^{(0)}{\bf A_1} X^{(1)}\cdots X^{(k-1)} {\bf A_k} X^{(k)} \in SpT_{2n}(\varpi_t)
\end{align} with $X^{(i)}=(x_1^{(i)})\cdots (x_{p_i}^{(i)})$, $0\le i\le k$, such that
$$\s(X^{(0)} X^{(1)}\cdots X^{(k-1)}X^{(k)})\cap {\bf a}=\emptyset$$ and,  for $i=1,\dots,k$, ${\bf A_i}=({a_i}, a_i+1,\dots , a_i+t_i-2,a_i+t_i-1)$, with $a_i\notin 2\Z$,
are the maximal nonempty intervals such that $\s({\bf A_i})=\bf A_i$,
satisfying
\begin{align}
&a_i\ge 2t^{(i-1)}+t_i+1, ~\mbox{ with $\displaystyle t^{(i-1)}=\sum_{j=1}^{i-1}t_j+\sum_{j=0}^{i-1}p_j$, $t^{(0)}=p_0$, and $t^{(k)}:=t$.}
\end{align}

Then, 
\begin{align}
\rm{red}_{t}^{-1}:=\redu^{-1}_{|SpT_{2n}(\varpi_t)}:SpT_{2n}(\varpi_t)&\rightarrow SST_{2n}(\varpi_l)\nonumber\\
\mathbf{a}&\mapsto\rm{red}_{t}^{-1}(\mathbf{a}):=T,\nonumber
\end{align}
where
\begin{align}&T=T^{(0)}_0(x_1^{(0)})T^{(0)}_1\cdots (x_{p_0}^{(0)}) T^{(0)}_{p_0}\mathbf{A_1}T^{(1)}_0(x_1^{(1)})T^{(1)}_1\cdots (x_{p_1}^{(1)}) T^{(1)}_{p_1}\cdots\nonumber \\
&T^{(k-1)}_0(x_1^{(k-1)})T^{(k-1)}_1\cdots (x_{p_{k-1}}^{(k-1)}) T^{(k-1)}_{p_{k-1}}{\bf A_k}T^{(k)}_0(x_1^{(k)})T^{(k)}_1\cdots (x_{p_{k}}^{(k)}) T^{(k)}_{p_{k}}
\end{align}

is such that
\begin{enumerate}
\item { for $i=0$, consider $\tilde l_{1}$
the nonnegative even number defined by
\begin{align}  \label{i=0}
\tilde l_{1}=min\{a_j- 2t^{(j-1)}-t_j-1,~~{1}\le j\le k,~
l-t\},
\end{align}
}

\begin{align}
&T^{(0)}_0=(1,2,\dots l_1),\\
&({x}^{(0)}_j)T^{(0)}_j=
\begin{cases}
 ({x}^{(0)}_j)( x^{(0)}_j+1,\dots, x^{(0)}_j+l_{j+1}),
  &
 \mbox{if } x^{(0)}_j\in 2\mathbb{Z}\\
 ({x}^{(0)}_j)( x^{(0)}_j+1+1,x^{(0)}_j+1+2,\dots, x^{(0)}_j+1+l_{j+1}),
 &\mbox{if } x^{(0)}_j\notin 2\mathbb{Z}\\
\end{cases},& 1\le j\le p_0, 
\end{align}
\noindent with
 { $l_j$, $1\le j\le t^{(0)}+1$,} nonnegative even numbers defined by
\begin{align}
&l_1=
\begin{cases} min\{x^{(0)}_1-2, \tilde l_1\},& \mbox{if } x^{(0)}_1\in 2\mathbb{Z}\\
min\{ x^{(0)}_1-1, \tilde l_1\},&\mbox{if } x^{(0)}_1\notin 2\mathbb{Z},\\
\end{cases}
\end{align}
\begin{align}
&l_{j+1}=
\begin{cases}min\{x^{(0)}_{j+1}-x^{(0)}_j-2, \tilde l_1-\sum_{m=1}^j l_m\}, &\mbox{if }
x^{(0)}_j, x^{(0)}_{j+1}\in 2\mathbb{Z} \mbox{ or } x^{(0)}_j, x^{(0)}_{j+1}\notin 2\mathbb{Z}
\\
min\{{ x^{(0)}_{j+1}-x^{(0)}_j-1},~~ \tilde l_1-\sum_{m=1}^j l_m\},&\mbox{if }
x^{(0)}_j\in 2\mathbb{Z}, x^{(0)}_{j+1}\notin 2\mathbb{Z} \\
min\{x^{(0)}_{j+1}-x^{(0)}_j-3,  \tilde l_1-\sum_{m=1}^j l_m\},&\mbox{if }
 x^{(0)}_j\notin 2\mathbb{Z}, ~~~ x^{(0)}_{j+1}\in 2\mathbb{Z}
\end{cases},\quad \mbox{ for } 1\le j<p_0,
\end{align}
and
\begin{align}
&l_{t^{(0)}+1}=l_{p_0+1}=\tilde l_1-\sum_{m=1}^{t^{(0)}}l_m;
\end{align}

\item for $i=1,\dots,k-1$, put $x^{(i)}_0:=a_i+t_i-1\in 2\Z$,  and consider $\tilde l_{i+1}$
the nonnegative even number defined recursively by

\begin{align}\oo  \label{i+1}
&\tilde l_{i+1}=\nonumber\\
&=min\bigg(\{a_j- 2t^{(j-1)}-t_j-1-\sum_{m=1}^{t^{(0)}+1}l_{m}-\sum_{m=t^{(0)}+t_1+1}^{t^{(1)}+1}l_{m}-\dots -\sum_{m=t^{(i-2)}+t_{i-1}+1}^{t^{(i-1)}+1}l_{m},~~i+1\le j\le k\}\nonumber\\
&\bigcup
\{l-t-\sum_{m=1}^{t^{(0)}+1}l_{m}-\sum_{m=t^{(0)}+t_1+1}^{t^{(1)}+1}l_{m}-\dots -\sum_{m=t^{(i-2)}+t_{i-1}+1}^{t^{(i-1)}+1}l_{m}\}\bigg),
~~ t^{(-1)}=t_0~:=0.
\end{align}

\begin{align}& T^{(i)}_0=(x^{(i)}_0+1,x^{(i)}_0+2,\dots, x^{(i)}_0+ l_{ t^{(i-1)}+t_i+1}),\\
&({x}^{(i)}_j)T^{(i)}_j=
\begin{cases}
 (x^{(i)}_j)( x^{(i)}_j+1,\dots, x^{(i)}_j+ l_{t^{(i-1)}+t_i+j+1}),
  &
 \mbox{if } x^{(i)}_j\in 2\mathbb{Z}\\
 ({x}^{(i)}_j)( x^{(2)}_j+1+1,x^{(i)}_j+1+2,\dots, x^{(i)}_j+1+ l_{t^{(i-1)}+t_i+j+1}),
 &\mbox{if } x^{(i)}_j\notin 2\mathbb{Z}\\
\end{cases},& 1\le j\le p_i, 
\end{align}
with
$ l_{t^{(i-1)}+t_i+j+1}$, $0\le j\le p_i$,
 nonnegative even numbers defined by
\begin{align}
& l_{t^{(i-1)}+t_i+1}=
\begin{cases} min\{x^{(i)}_1-x^{(i)}_0-2, \tilde l_{i+1}\},& \mbox{if } x^{(i)}_1\in 2\mathbb{Z}\\
min\{ x^{(i)}_1-x^{(i)}_0-1, \tilde l_{i+1}\},&\mbox{if } x^{(i)}_1\notin 2\mathbb{Z},\\
\end{cases}
\end{align}
\begin{align}
&l_{t^{(i-1)}+t_i+j+1}=
\begin{cases}min\{x^{(i)}_{j+1}-x^{(i)}_j-2, \tilde l_{i+1}-\sum_{m=1}^j  l_{t^{(i-1)}+t_i+m}\}, &\mbox{if }
x^{(i)}_j, x^{(i)}_{j+1}\in 2\mathbb{Z} \mbox{ or } x^{(i)}_j, x^{(i)}_{j+1}\notin 2\mathbb{Z}
\\
min\{{ x^{(i)}_{j+1}-x^{(i)}_j-1},~~ \tilde l_{i+1}-\sum_{m=1}^j  l_{t^{(i-1)}+t_i+m}\},&\mbox{if }
x^{(i)}_j\in 2\mathbb{Z}, x^{(i)}_{j+1}\notin 2\mathbb{Z} \\
min\{x^{(i)}_{j+1}-x^{(i)}_j-3,  \tilde l_{i+1}-\sum_{m=1}^i  l_{t^{(i-1)}+t_i+m}\},&\mbox{if }
 x^{(i)}_j\notin 2\mathbb{Z}, ~~~ ,x^{(i)}_{j+1}\in 2\mathbb{Z}
\end{cases},\nonumber\\
&\quad \mbox{ for } 1\le j<p_i,
\end{align}
and
\begin{align}  l_{t^{(i)}+1}=\tilde l_{i+1}-\sum_{m=1}^{p_i} l_{t^{(i-1)}+t_i+m}
\end{align}
\bigskip

\bigskip

\item { for $i=k$}, put $x^{(k)}_0:=a_k+t_k-1\in 2\Z$,  and with $\tilde l_{k+1}$
the nonnegative even number defined  by

\begin{align}\tilde l_{k+1}:=l-t-\sum_{m=1}^{t^{(0)}+1}l_{m}-\sum_{m=t^{(0)}+t_1+1}^{t^{(1)}+1}l_{m}-\dots -\sum_{m=t^{(k-2)}+t_{k-1}+1}^{t^{(k-1)}+1}l_{m}\end{align}

\begin{align}&T^{(k)}_0=(x^{(k)}_0+1,x^{(k)}_0+2,\dots, x^{(k)}_0+ l_{t^{(k)}+1}),\\
& ({x}^{(k)}_j)T^{(k)}=
\begin{cases}
 ({x}^{(k)}_j)( x^{(k)}_j+1,\dots, x^{(k)}_j+ l_{t^{(k-1)}+j+1}),
  &
 \mbox{if } x^{(k)}_j\in 2\mathbb{Z}\\
 ({x}^{(k)}_j)( x^{(k)}_j+1+1,x^{(k)}_j+1+2,\dots,
x^{(k)}_j+1+ l_{t^{(k-1)}t_k+j+1}),
 &\mbox{if } x^{(k)}_j\notin 2\mathbb{Z}\\
\end{cases},& 1\le j\le p_k, 
\end{align}
with
$ l_{t^{(k-1)}+t_k+1}$, $0\le j\le p_k$,
 nonnegative even numbers defined by
\begin{align}
& l_{t_1+t_2+p_0+p_1+1}=
\begin{cases} min\{x^{(k)}_1-x^{(k)}_0-2, \tilde l_{k+1}\},& \mbox{if } x^{(k)}_1\in 2\mathbb{Z}\\
min\{ x^{(k)}_1-x^{(k)}_0-1, \tilde l_{k+1}\},&\mbox{if } x^{(k)}_1\notin 2\mathbb{Z},\\
\end{cases}
\end{align}
\begin{align}
&l_{t^{(k-1)}+t_k+j+1}=
\begin{cases}min\{x^{(k)}_{j+1}-x^{(k)}_j-2, \tilde l_{k+1}-\sum_{m=1}^j  l_{t^{(k-1)}+t_k+m}\}, &\mbox{if }
x^{(k)}_j, x^{(k)}_{j+1}\in 2\mathbb{Z} \mbox{ or } x^{(k)}_j, x^{(k)}_{j+1}\notin 2\mathbb{Z}
\\
min\{{ x^{(k)}_{j+1}-x^{(k)}_j-1},~~ \tilde l_{k+1}-\sum_{m=1}^j  l_{t^{(k-1)}+t_k+m}\},&\mbox{if }
x^{(k)}_j\in 2\mathbb{Z}, x^{(k)}_{j+1}\notin 2\mathbb{Z} \\
min\{x^{(k)}_{j+1}-x^{(k)}_j-3,  \tilde l_{k+1}-\sum_{m=1}^j  l_{t^{(k-1)}+t_k+m}\},&\mbox{if }
 x^{(k)}_j\notin 2\mathbb{Z}, ~~~ ,x^{(k)}_{j+1}\in 2\mathbb{Z}
\end{cases},\nonumber\\
&\quad \mbox{ for } 1\le j<p_k, \mbox{ and}
\end{align}
\noindent  \begin{align}  l_{t^{(k)}+1}=\tilde l_{k+1}-\sum_{m=1}^{p_k} l_{t^{(k-1)}+t_k+m}.\end{align} \qquad\qquad\qquad\qquad\qquad\qquad\qquad\qquad\qquad\qquad\qquad\qquad\qquad\qquad\qquad\qquad\qquad\qquad\qquad\qquad\qquad$\Box$
\end{enumerate}

\end{thm}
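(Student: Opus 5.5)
The plan is to prove that $\redu(T)={\bf a}$, i.e.\ $\rem(T)=T\setminus{\bf a}$, by combining the set-theoretic construction from the proof of Theorem \ref{thm:noconsecutive} with the interval bounds from Theorem \ref{thm:intervals}. First I check that every $l_m$ and every $\tilde l_{i+1}$ is a nonnegative even integer. Then I check that the lengths add up to $l$, so that $T\in SST_{2n}(\varpi_l)$. Evenness follows because each quantity $a_j-2t^{(j-1)}-t_j-1$ is even ($a_j$ odd, $t_j$ even) and the gap quantities $x_{j+1}-x_j-2$, $x_{j+1}-x_j-1$, $x_{j+1}-x_j-3$ are even in their respective parity cases. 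Nonnegativity comes from the symplectic inequalities $a_i\ge 2t^{(i-1)}+t_i+1$ of Theorem \ref{thm:symplecticdecomposition} and from Lemma \ref{lem:swapping}(6). The telescoping definition of $\tilde l_{k+1}$ and of $l_{t^{(k)}+1}$ gives total length $t+(l-t)=l$.

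Next I identify the added entries. Set $W=[2n]\setminus({\bf a}\sqcup\s({\bf a}))$; since $\s({\bf A_i})={\bf A_i}$, this is $[2n]$ minus the ${\bf A_i}$ and the pairs $\{x,\s(x)\}$ for $x\in X^{(0)}\cdots X^{(k)}$. As in the proof of Theorem \ref{thm:noconsecutive}, each block $T^{(i)}_j$ is a run of consecutive integers of $W$. It starts odd and ends even, so it is a union of pairs $\{y,\s(y)\}$ disjoint from ${\bf a}\sqcup\s({\bf a})$. The blocks are filled greedily, as early as possible. However, the greedy filling is capped: the total placed before ${\bf A_j}$ may be at most $a_j-2t^{(j-1)}-t_j-1$. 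This is the quantity that appears in \eqref{i=0} and \eqref{i+1}.

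The core is a removal check using Proposition \ref{prop:rem}(2), scanning $T$ from top to bottom by induction on the position. There are three kinds of entries to control.
\begin{enumerate}
\item \emph{Entries of an $X^{(i)}$.} Such an entry $x$ is never removed, because neither neighbour of $x$ equals $\s(x)$: the neighbour is either another letter of ${\bf a}$, which is excluded by Lemma \ref{lem:swapping}, or an added pair, which avoids $\s({\bf a})$ by construction.
\item \emph{Entries of a ${\bf A_j}$.} Every odd entry $a_j+2r$ must fail the inequality of Proposition \ref{prop:rem}(2)(a). Suppose $L_j$ entries have been added before ${\bf A_j}$, all of them removed. The condition then becomes $a_j+t_j-2\ge 2(t^{(j-1)}+t_j-1+L_j)-L_j$, that is, $L_j\le a_j-2t^{(j-1)}-t_j-1$. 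The derivation is exactly as in \eqref{star3} and Theorem \ref{thm:xA}. The smaller odd entries of ${\bf A_j}$ follow by the backward step used in Lemma \ref{lem:reductionsurj}.
\item \emph{Entries of an added block.} Every added pair $(y,y+1)$ must satisfy the strict inequality, so that it is removed. I would argue this as in the last part of the proof of Theorem \ref{thm:noconsecutive}. A greedily placed pair sits just after at most $|{\bf a}\text{ so far}|$ unremoved letters, and its value is bounded by $2(\text{position})-|\rem|$. The induction on $t$ there, which deletes the last letter and absorbs $\{a_t,\s(a_t)\}$ into $W'$, transfers verbatim. This is because the capped greedy prefix of $W$ coincides with the first elements of $W$ up to each ${\bf A_j}$.
\end{enumerate}

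The main obstacle is the second point: the recursive minima $\tilde l_{i+1}$ must guarantee $L_j\le a_j-2t^{(j-1)}-t_j-1$ for every later $j$ simultaneously, not only for $j=i+1$. I would prove by induction on $i$ that $\sum(\text{added before }{\bf A_{i+1}})=\tilde l_1+\cdots+\tilde l_i$. Here the cumulative sum equals the sum of the $l_m$ subtracted in \eqref{i+1}. Since each $\tilde l$ is a minimum over all $j\ge i+1$, this gives $L_j\le a_j-2t^{(j-1)}-t_j-1$ for all $j$, which is the same mechanism as in Theorem \ref{thm:intervals}. I also need a second fact: when a minimum is attained by an $x$-gap rather than by the cap, the leftover added length really spills into the next gap. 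This holds because $W$ is exhausted there, and I expect to settle it by the same interval-structure argument ($Z_0,Z,Z_1$) used in Theorem \ref{thm:noconsecutive}. Finally, injectivity from Proposition \ref{prop:redu}(3) makes $T=\redu^{-1}({\bf a})$.
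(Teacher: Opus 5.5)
Your step (3) is where the argument breaks, and it is the heart of the theorem. The induction from the proof of Theorem~\ref{thm:noconsecutive} (delete $a_t$, put $\{a_t,\s(a_t)\}$ into $W'$) uses $\s(a_t)\notin\mathbf{a}$ and that the added entries are the first $l-t$ elements of $W$. Neither holds once some $\mathbf{A}_j$ is present.

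\begin{itemize}
\item If $p_k=0$, deleting $a_k+t_k-1$ does not change $\mathbf{a}\cup\s(\mathbf{a})$. So $W'=W$ rather than $W\sqcup\{a_t,\s(a_t)\}$, and the bookkeeping of that induction collapses.
\item The added set is not a prefix of $W$. In Example~(2) after Theorem~\ref{lem:reductionsurj2}, $W=[1,8]\sqcup[11,16]\sqcup[21,24]$, but $T\setminus\mathbf{a}=[1,6]\sqcup[11,12]\sqcup[21,22]$, so $7,8,13,\dots,16$ are skipped.
\item A pair placed after some $\mathbf{A}_j$ is not automatically removed. For $\mathbf{a}=(9,10)$ and $l=6$, the column $(1,2,9,10,11,12)$ reduces to $(9,10,11,12)$, because $11\not<2\cdot 5-2$.
\end{itemize}
So the bound ``value $<2(\text{position})-|\rem|$'' cannot come from greediness plus a verbatim transfer.

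What is missing is a lower bound to go with your upper bound $L_j\le c_j:=a_j-2t^{(j-1)}-t_j-1$. Let $X$ be the union of the $X^{(i)}$, $\mathbf{A}$ the union of the $\mathbf{A}_i$, and $S=T\setminus\mathbf{a}$.
\begin{itemize}
\item For odd $y$, $[1,y-1]$ is a union of pairs $\{2m-1,2m\}$, so $y-1=2|X_{<y}|+|\mathbf{A}_{<y}|+|W_{<y}|$.
\item Assume inductively that exactly the added entries above $y$ were removed, and let $y$ sit at index $i$ with $r=|S_{<y}|$. Then $i-1=|X_{<y}|+|\mathbf{A}_{<y}|+r$.
\item Plugging both into Proposition~\ref{prop:rem}(2)(a), an added pair $(y,y+1)$ is removed iff $|W_{<y}|-r\le|\mathbf{A}_{<y}|$.
\item The same count shows that the last pair of $\mathbf{A}_j$ survives iff $|W_{<a_j}|-L_j\ge t_1+\cdots+t_j$, i.e.\ $L_j\le c_j$.
\end{itemize}
Hence anything added between $\mathbf{A}_j$ and $\mathbf{A}_{j+1}$ forces $L_j=c_j$ \emph{exactly}. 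It also forces the entries added in that region to be an initial segment of $W$ restricted to that region.

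You then need two further facts from the definitions of the $\tilde l$'s:
\begin{itemize}
\item $L_i=\min\{c_i,\dots,c_k,l-t\}$, so that $\tilde l_{i+1}>0$ implies $L_i=c_i$;
\item the gap formulas fill $W$ in order within each region.
\end{itemize}
This is the ``one more pair flips the inequality'' argument of Lemma~\ref{lem:reductionsurj} and Theorem~\ref{lem:reductionsurj2}, carried through every region. Your obstacle paragraph secures only the upper bound, which protects the $\mathbf{A}_j$ entries but does not show that the added pairs after them are removed.
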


\section{Final remarks: the inverse reduction map as a composition of several maps and combinatorial $R$-matrices}\label{sec:final}
In \cite[Sections 4.3,~4.4]{watanabe} the reduction map is given as composition of several maps, as below in Theorem \ref{thm:composition}, and among them combinatorial  $R$-matrices \cite{rmatrix} and a reduction map on lower length  columns.
\begin{thm}\label{thm:composition}\cite[Corollary 4.4.3]{watanabe} For a fixed $n\in\mathbb{N}$, let $l\in[0,2n]$ and $l\ge 2$. Then
\begin{align}\redu=\pi\circ\bigwedge\circ(K^{-1},id)\circ R\circ(\redu,id)\circ R\circ (K,id)\circ \bigvee.
\end{align}
For the definition of these maps, we refer the reader to \cite[Sections 4.3,~4.4]{watanabe}.
\end{thm}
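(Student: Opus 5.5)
The plan is to prove the identity by induction on $l$, using the recursive description of $\rem$ in \eqref{removal} together with Propositions \ref{prop:removcontain} and \ref{prop:rem}. These say that $\rem(\mathbf a)$ is determined by $\rem(a_1,\dots,a_{l-2})$ (or $\rem(a_1,\dots,a_{l-1})$) and by the last one or two letters. I will treat the right-hand side as an operation that peels off a short piece of the column, reduces a column of smaller length by the inductive hypothesis, and then glues the result back. Concretely, I expect $\bigvee$ to split $\mathbf a\in SST_{2n}(\varpi_l)$ into a tensor product of a shorter column with the remaining letter(s), and $(K,id)$ to adjust that piece. The first $R$ would then move the shorter column into the position where $(\redu,id)$ acts. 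The second $R$, $(K^{-1},id)$, $\bigwedge$ and $\pi$ would undo the transport and reassemble a single column. Since the definitions are in \cite[Sections 4.3, 4.4]{watanabe}, the first step is to make each of these maps explicit on column words, in particular the combinatorial $R$-matrix on a tensor product of two columns, using \cite{rmatrix}.

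The induction has two cases, following the cases of \eqref{removal}. In the case where $a_l$ is odd, or where $(a_{l-1},a_l)$ does not satisfy the removal inequality, Proposition \ref{prop:removcontain}(2),(3) gives $\rem(\mathbf a)=\rem(a_1,\dots,a_{l-1})$. I then need to check that the two $R$-matrices act trivially, or cancel each other, on the relevant tensor factor, so that the right-hand side reduces to $\redu(a_1,\dots,a_{l-1})$ with $a_l$ reattached. In the case where $(a_l-1,a_l)$ is a removable pair, Proposition \ref{prop:rem}(3) says $\s$-pairs are removed together. I then need to check that $\bigwedge$ followed by $\pi$ annihilates exactly the pair $\{a_l-1,a_l\}$ and leaves the inductively reduced column untouched. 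For the base cases $l\le 1$, and $l=2$ where Corollary \ref{empty0} applies, both sides are computed directly.

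The key tool for this bookkeeping is Theorem \ref{thm:symplecticdecomposition}. It describes the output $\redu(\cdot)$ of the inner reduction as $X^{(0)}\mathbf A_1\cdots\mathbf A_kX^{(k)}$, and that explicit shape should make the action of the second $R$-matrix on a symplectic column tensored with a short column computable. The inequality $a_i\ge 2t^{(i-1)}+t_i+1$ is what should control where the bumped letters land.

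I expect the main obstacle to be the comparison between the removal threshold $a_l<2l-|\rem(a_1,\dots,a_{l-2})|-1$ and the behaviour of the $R$-matrices. One has to show that the $R$-matrix bumps a letter across exactly when this inequality holds, which is a genuinely non-local condition. My first approach would be to compare weights and use that $R$ is a crystal isomorphism: both sides commute with the Kashiwara operators that preserve the relevant structure, so it suffices to check agreement on highest-weight elements. These are of the explicit forms \eqref{numbers:u}, \eqref{numbers:v} discussed after Corollary \ref{cor:withoutA}, where the identity can be verified by hand. If that crystal-theoretic reduction is not available, I would instead run the explicit inverse from Theorem \ref{thm:reductiexp} through the composition and check that it recovers $\mathbf a$.
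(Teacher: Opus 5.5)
This statement is not proved in the paper. It is quoted from \cite[Corollary 4.4.3]{watanabe}, and the maps $\bigvee$, $K$, $R$, $\bigwedge$, $\pi$ are defined only there. Your proposal does not replace that citation with an argument, and it has a genuine gap.

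You never fix what these maps are; you only say what you \emph{expect} them to do. So none of your case checks can actually be carried out. The alignment you assume between the composition and the recursion \eqref{removal} is also unjustified. \eqref{removal} recurses on the prefixes $(a_1,\dots,a_{l-1})$ and $(a_1,\dots,a_{l-2})$. The column fed to the inner $\redu$ on the right-hand side, however, is the first factor of $R\circ(K,id)\circ\bigvee(\mathbf a)$. It has already passed through $(K,id)$ and an $R$-matrix, so there is no reason for it to be such a prefix. The claim that the two $R$'s ``act trivially or cancel'' in your non-removable case is likewise unsupported: $(\redu,id)$ sits between them, and that case allows $\rem(\mathbf a)=\rem(a_1,\dots,a_{l-1})\neq\emptyset$. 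The step you yourself call the main obstacle is that the $R$-matrix moves a letter exactly when $a_l<2l-|\rem(a_1,\dots,a_{l-2})|-1$. That is the entire content of the identity, and it is left open. Even the base case is missing. Corollary \ref{empty0} concerns only the column $(1,2,\dots,l)$, while for $l=2$ you would need the right-hand side on all of $SST_{2n}(\varpi_2)$.

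Your proposed way around the obstacle also fails as stated. The map $\redu$ is not a morphism of $\mathfrak{gl}_{2n}$-crystals and does not even preserve $\mathfrak{gl}_{2n}$-weights. For $n\ge 2$, \eqref{removal} gives $\redu(1,2)=\emptyset$ and $\redu(1,3)=(1,3)$. Yet $\tilde f_2(1,2)=(1,3)$, whereas $\tilde f_2\emptyset=0$.

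The structure $\redu$ is built to respect is Watanabe's $\imath$crystal structure for the symmetric pair of type AII. To use it, you would first have to prove that the whole right-hand side is compatible with that structure. That needs exactly the definitions and results of \cite[Sections 4.3, 4.4]{watanabe} that you are trying to bypass.

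Moreover, the columns in \eqref{numbers:u} and \eqref{numbers:v} are $\mathfrak k$-highest and lowest weight \emph{symplectic} columns of length $t$, i.e.\ elements of the codomain. To pin down a morphism you must test it on an element of each connected component of the domain $SST_{2n}(\varpi_l)$. Those are columns of length $l$, for instance the preimages under $\redu$ of such symplectic columns. Computing those preimages is what Theorem \ref{thm:reductiexp} does, and that theorem is only stated in the paper, without proof. So both of your fallbacks come down to pushing explicit columns through the composition, which again requires the explicit maps your proposal never supplies.
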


This is conceptually very interesting because, in particular, it allows to reduce the computation  to  columns  of lower length \cite[Proposition 4.4.1]{watanabe}. Combinatorial $R$-matrices are bijections and so we could use the inverse of these maps to compute the inverse reduction.
However one still  needs to compute the inverse reduction of shorter symplectic columns  and thereby it is useful to  have at hand the inverse.

For a fixed $n\in\mathbb{N}$, let $l\in [0,2n]$ and $t\in[0,n]$ such that  $0\le t\le min\{l,2n-l\}$ and $l-t\in 2\Z$,
\begin{align}
&\redu^{-1}:SpT_{2n}(\varpi_t)\rightarrow SST_{2n}(\varpi_l)\nonumber\\
&\mathbf{a}\mapsto\redu^{-1}(\mathbf{a}),\nonumber
\\
&\mbox{and } \redu^{-1}={\bigvee}^{-1}\circ (K^{-1},id)\circ R^{-1}\circ (\redu^{-1},id)\circ R^{-1}\circ (K,id)\circ {\bigwedge}^{-1}\circ \pi^{-1}.
\end{align}

Our method in Theorem \ref{thm:reductiexp} gives  a general procedure on how to expand a given general symplectic column \eqref{generalsymplecticdecompose}
to a column of an admissible given length as in the conditions above. A compromise is a combination of our procedures with the composition above.
See Example \ref{ex:discussion}, $(3)$, and \cite[Example 4.4.2]{watanabe}.


\bibliography{sample17}
\bibliographystyle{alpha}

\end{document}